\documentclass{amsart}
\usepackage{hyperref, cleveref, amsthm, amssymb, color, mathtools, thmtools}

\usepackage{comment}
\usepackage{enumerate}

\usepackage{tikz, tcolorbox}
\usetikzlibrary{cd}

\input xy
\xyoption{all}

\theoremstyle{plain}
\newtheorem{theorem}{Theorem}[section]
\newtheorem{proposition}[theorem]{Proposition}
\newtheorem{corollary}[theorem]{Corollary}
\newtheorem{lemma}[theorem]{Lemma}

\theoremstyle{definition}
\newtheorem{remark}[theorem]{Remark}

\numberwithin{equation}{section}

\crefname{theorem}{Theorem}{Theorems}
\crefname{proposition}{Proposition}{Propositions}
\crefname{section}{Section}{Sections}

\newcommand{\N}{\mathbb{N}}

\newcommand{\R}{\mathbb{R}}

\newcommand{\spn}{\textrm{span}\,}

\newcommand{\ran}{\textrm{ran}\,}

\newcommand{\vr}{\varepsilon}
\newcommand{\ball}{\mathrm{B}}
\newcommand{\sphere}{\textrm{S}}

\newcommand{\codim}{\mathrm{codim} \, }

\newcommand{\marg}[1]{\marginpar{\tiny #1}}     
\newcommand{\timur}[1]{{\textcolor{blue}{{\bf T.O:} #1}}}
\newcommand{\ana}[1]{{\textcolor{blue}{{\bf A.I:} #1}}}

\newcommand{\id}{{\textrm{id}}}

\DeclareSymbolFont{bbold}{U}{bbold}{m}{n}
\DeclareSymbolFontAlphabet{\mathbbold}{bbold}

\DeclareMathOperator{\FVL}{FVL}
\DeclareMathOperator{\FBL}{FBL}
\DeclareMathOperator{\FBLp}{FBL^{\mathit{(p)}}}

\newcommand{\Hp}{H_{w^*}^{p}}
\newcommand{\nip}{I_{w^*}^{p}}
\newcommand{\Ip}{\overline{I}_{w^*}^{p}}

\begin{document}

\title[Upper bound properties via operators]{Upper bound properties of free and related Banach lattices via operators}

\author{Anastasiia Ianina and Timur Oikhberg}


\address{Department of Mathematics, University of Illinois at Urbana-Champaign, Urbana, IL}
 \email{aianina2@illinois.edu}
 \email{oikhberg@illinois.edu}

\thanks{This work was partially supported by UIUC CRB award RB26177. The authors are grateful to M.~Taylor, P.~Tradacete, and V.~Troitsky for many stimulating conversations.}


\begin{abstract}
It is known that the free $p$-convex Banach lattice on a Banach space $X$ can be represented as a space of functions on the unit ball of $X^*$.
In this way, it gives rise to certain related (larger) lattices. To investigate such lattices, we introduce a new tool, related to operators into $X$.
This tool is then used to (i) determine whether the lattices in question possess, or fail, properties involving upper bounds of upward directed sets -- namely, the Fatou property, and the related property of monotonic boundedness; (ii) investigate the regularity of embeddings between spaces in question.
We find connections between the Fatou-like properties of $\FBLp[X]$ and the Radon-Nikodym property of $X$.
In addition, we give an example of $X \subset Y$ such that $\FBLp[X]$ is not a regular sublattice of $\FBLp[Y]$.
\end{abstract}

\maketitle

\section{Introduction and preliminaries}\label{sec:intro}

The theory of free Banach lattices has been advancing rapidly in the past several years. A free $p$-convex Banach lattice $\FBLp[X]$, over a Banach space $X$, is the (necessarily unique) Banach lattice $L$ such that (i) there exists an isometric embedding $\phi_{X,p}: X \to L$, and (ii) every bounded linear operator $T : X \to Z$, where $Z$ is a Banach lattice with $p$-convexity constant $1$, extends to a lattice homomorphism $\widehat{T}: L \to Z$, with $\|\widehat{T}\| = \|T\|$, as illustrated by the diagram below:
$$
\xymatrix{
 \FBLp[X] \ar@{-->}[dr]^{\widehat{T}}  &   \\
  X \ar[r]^{T} \ar@{^{(}->}[u]^{\phi_{X,p}}  & Z } 
$$
It is known (see, e.g.~\cite{OiTTT} or \cite{GSLTT}) that $\FBLp[X]$ admits the following functional representation. Consider $C(\ball(X^*))$, the space of weak$^*$-continuous functions on the closed unit ball $\ball(X^*)$.
For $x \in X$, define $\phi_{X,p}(x) = \delta_x : x^* \mapsto \langle x^*,x\rangle$.
The \emph{free vector lattice} on $X$ ($\FVL[X]$) is the vector sublattice of $C(\ball(X^*))$ generated by $\phi_{X,p}(X)$.
For $f \in C(\ball(X^*))$, define the ``$p$-summing'' norm $\|f\|_p$: 
\begin{equation}
\|f\|_p = \inf \big\{ C : \forall x_1^*, \ldots, x_n^* \in X^*, (\sum_{i=1}^n |f(x_i^*)|^p)^{1/p} \leq C \|(x_i^*)_{i=1}^n\|_{p, {\textrm{weak}}} \big\} .
\label{def of p-norm}
\end{equation}
Throughout this paper, for $z_1, \ldots, z_n \in Z$ we write $\|(z_i)_{i=1}^n\|_{p, {\textrm{weak}}} = \sup \big\{ \| \sum_i \alpha_i z_i \| : \|(\alpha_i)_i\|_{\ell_{p'}} \leq 1 \big\}$, and $1/p + 1/p' = 1$. Then $\FBLp[X]$ is the closure of $\FVL[X]$ in the norm $\| \cdot \|_p$. 

This functional representation leads naturally to the following larger lattices: 
\begin{itemize}
 \item $H_{w^*}[X]$ is the linear space of positively homogeneous functions $X^* \to \R$, weak$^*$-continuous on bounded sets;
 \item $\Hp[X]$ is the space of all $f \in H_{w^*}[X]$ with $\|f\|_p < \infty$;
 \item $I_{w^*}[X]$ (respectively, $\nip[X]$) is the ideal generated in $H_{w^*}[X]$ by $(\delta_x)_{x \in X}$ (respectively, by $\FBLp[X]$);
 \item $\Ip[X]$ is the closure of $I_{w^*}[X]$ (equivalently, of $\nip[X]$) in $\Hp[X]$.
\end{itemize}
When $p = \infty$, \cite[Proposition 2.2]{OiTTT} implies that the spaces $H_{w^*}^\infty[X]$, $\overline{I}_{w^*}^\infty[X]$, and $\FBL^{(\infty)}[X]$ all coincide with the space of positively homogeneous weak$^*$-continuous functions on $\ball(X^*)$, and $\| \cdot \|_\infty$ equals the $\sup$ norm. In general, $\FBLp[X] \subset \nip[X] \subset \Ip[X] \subset \Hp[X]$, and we shall see later that all the inclusions may be proper. See \cite{OiTTT} for a thorough study of free Banach lattices, and \cite{LaTr} for more information on the related function spaces introduced above.

The question of linking properties of a Banach space $X$ with those of the free Banach lattice built on it was raised in \cite{OiTTT}, where several such connections were established. The present work can be viewed as a step in this direction. In particular, we relate certain order-theoretic properties of free Banach lattices to the Radon-Nikodym property (RNP) of an underlying Banach space. We also study natural embeddings between free Banach lattices, continuing the investigations initiated in \cite{OiTTT} and \cite{AD}. Our approach to both problems is based on operators with range in $X$, as their adjoints naturally give rise to positively homogeneous functions on $\ball(X^*)$.

We focus on two order-theoretic properties of Banach lattices: the Fatou property and monotonic boundedness.
We say that a Banach lattice $Z$ has the \emph{$\lambda$-Fatou property} if for every upward directed set ${\mathcal{S}} \subset Z_+$ with supremum $z_0$, one has $\|z_0\| \leq \lambda \sup_{z \in {\mathcal{S}}} \|z\|$. When $\lambda = 1$, we simply say that $Z$ has the \emph{Fatou property}.

Following e.g.~\cite[Definition 4.2]{DLOT} or \cite[Section 10]{Taylor}, we say that a Banach lattice $Z$ is \emph{monotonically bounded} if every norm-bounded increasing net in $Z_+$ has an upper bound. Below we show that, in this case, there exists $\lambda \geq 1$ so that every norm-bounded increasing net $(z_\alpha) \subset Z_+$ has an upper bound $z_0$ with $\|z_0\| \leq \lambda \sup_\alpha \|z_\alpha\|$.
We shall then say that $Z$ is \emph{$\lambda$-monotonically bounded}. Clearly, $\lambda$-monotonic boundedness implies the $\lambda$-Fatou property.

If, instead of upward directed nets, we restrict our attention to increasing sequences, we obtain the definitions of \emph{countable $\lambda$-Fatou} etc.~properties.
\begin{remark}
 In \cite{ABT}, the term ``$\lambda$-strong Nakano'' is used to refer to monotonic boundedness. The term ``boundedly order bounded'' is also used sometimes.
\end{remark}

As for embeddings of lattices, we are especially concerned with their order continuity. Recall that for vector lattices $E$ and $F$ and an injective lattice homomorphism $j: E \to F$, $j(E)$ is called a \emph{regular sublattice} of $F$ if, for every set $A \subset E$ with supremum $a$ in $E$, $ja$ is the supremum of $j(A)$ in $F$. This is easily seen to be equivalent to $j$ being order continuous.
Specializing to free and related lattices, we consider certain ``natural" embeddings. Namely, for Banach spaces $X, Y$ and a bounded linear operator $i: X \to Y$, we consider $\overline{i}: \Hp[X] \to \Hp[Y]$ defined by $[\overline{i} f](y^*) = f(i^* y^*)$. 
The argument of \cite{OiTTT-cor} shows that $i$ is injective if and only if $\overline{i}$ is. Note that, by \cite{OiTTT}, $\overline{i}(\FBLp[X]) \subset \FBLp[Y]$; consequently, $\overline{i}(\Ip[X]) \subset \Ip[Y]$.

The paper is organized as follows. After gathering some essential facts about ``operator-generated'' functions in \Cref{general} and examining monotonic boundedness in \Cref{m-b}, we show in \Cref{+nakano} that if $X$ is a dual space with the RNP and $1 \leq p < \infty$, then $\FBLp[X]$, $\Ip[X]$, and $\Hp[X]$ are 1-monotonically bounded. Under the same assumptions, $\Hp[X]=\Ip[X] = \nip[X]$.

In \Cref{operators:general}, we develop a general operator-based approach to detecting failure of the Fatou and monotonic boundedness properties in the free and related lattices. This approach also turns out to be useful in distinguishing between these spaces: we provide a sufficient condition for the inclusions $\nip[X] \subset \Ip[X] \subset \Hp[X]$ to be proper.

In the following sections, we apply our operator approach. In particular, in \Cref{no compactness}, we show that, for $1 \leq p < \infty$, under certain assumptions on $X$ stronger than the failure of the RNP, $\FBLp[X]$, $\Ip[X]$, and $\Hp[X]$ fail the $\lambda$-Fatou property for every $\lambda$.  The case $p=\infty$ is different: $\FBL^{(\infty)}[X]$ fails the $\lambda$-Fatou property unless $X$ is finite-dimensional. We further show that, for a dual space $X$ and $p \in [1,\infty)$, $\Hp[X]$ is 1-monotonically bounded if and only if $\Hp[X]$ has the Fatou property, which in turn is equivalent to $X$ having the wRNP (the weak Radon-Nikodym Property, described in \cite[Section 7.4]{Bour}). A similar characterization holds for $\Ip[X]$: if $X$ is a dual space, then $\Ip[X]$ has the Fatou property if and only if $X$ has the wRNP. However, in \Cref{james tree}, we show that there exists a dual space $X$ with the wRNP for which $\Ip[X]$ is not monotonically bounded and $\FBLp[X]$ fails the $\lambda$-Fatou property for every $\lambda$.

In \Cref{Ip into Hp}, we show that, for certain spaces $X$ failing the RNP, the inclusions $\nip[X] \subset \Ip[X] \subset \Hp[X]$ are proper. In contrast, \Cref{sup-norm} establishes that, for every Banach space $X$, the $\|\cdot\|_p$-unit balls of $I_{w^*}[X]$ and $\Ip[X]$ are $\|\cdot\|_{\infty}$-dense in the $\|\cdot\|_p$-unit ball of $\Hp[X]$. 

Finally, in \Cref{FBLp into Ip}, we investigate the regularity of natural embeddings between the lattices under consideration. We prove that $\FBLp[X]$ is not a regular sublattice of $\nip[X]$ unless $X$ is finite-dimensional. This sharpens a result of \cite{LaTr}, where additional assumptions on $X$ were imposed to show that $\FBLp[X]$ is a proper subspace of $\Ip[X]$. We also consider regularity in the context of embeddings between free Banach lattices. Given an isometric embedding $i : X \to Y$,  the question of whether $\overline{i}(\FBLp[X])$ must be a regular sublattice of $\FBLp[Y]$ has hitherto been open (for $p=\infty$, an affirmative answer was obtained in \cite{AD}; see also \cite{OiTTT-cor}). We answer this question in the negative.

Throughout, we rely on standard notation and facts. In particular, we denote by $\ball(X)$ and $\sphere(X)$ the closed unit ball and the unit sphere of $X$, respectively, and by $B(X, Y)$ the space of bounded linear operators from $X$ to $Y$. The required background on Banach spaces can be found, for instance, in \cite{AK}. For information about Banach lattices, we refer the reader to e.g.~\cite{M-N}, while the RNP is handled in e.g.~\cite{Bour}, \cite{DU}, \cite{DU-survey}, or \cite[Chapter 5]{BL}. 


\section{Free Banach lattices and $p$-summing operators}\label{general}

Our exposition relies heavily on the ideals $\Pi_p$, $I_p$, and $N_p$ of \emph{$p$-summing}, \emph{$p$-integral}, and \emph{$p$-nuclear} operators; the corresponding norms are denoted by $\pi_p( \cdot )$, $\iota_p( \cdot )$, and $\nu_p( \cdot )$, respectively; see \cite{DJT} for more information.

We shall often use a special class of functions: for $T : X^* \to Z$, we define $\phi_T(x^*) = \|T x^*\|$. By \eqref{def of p-norm}, $\|\phi_T\|_p = \pi_p(T)$. The following result can be obtained by combining \cite[Section 7]{JLTTT} (see also \cite[Proposition 2.6]{OiGEO}) with Pietsch Factorization Theorem.

\begin{proposition}\label{FBLp norm}
For $p \in [1,\infty)$, and $\phi \in H_{w^*}[X]$, the following statements are equivalent:
\begin{enumerate}
\item $\phi \in \Hp[X]$, with $\| \phi \|_p \leq 1$.
\item There exist a probability measure $\mu$ and an operator $T : X^* \to L_p(\mu)$ so that $\iota_p(T) \leq 1$, and $|\phi| \leq \phi_T$.
\item There exist a Banach space $Y$ and an operator $T : X^* \to Y$ so that $\pi_p(T) \leq 1$, and $|\phi| \leq \phi_T$.
\end{enumerate}
\end{proposition}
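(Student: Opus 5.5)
We prove the cycle (2) $\Rightarrow$ (3) $\Rightarrow$ (1) $\Rightarrow$ (2).

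\emph{(2) $\Rightarrow$ (3).} This is immediate, since $\pi_p(T) \leq \iota_p(T)$ for every operator (take $Y = L_p(\mu)$).

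\emph{(3) $\Rightarrow$ (1).} Let $x_1^*, \ldots, x_n^* \in X^*$. Then
$$\Big(\sum_i |\phi(x_i^*)|^p\Big)^{1/p} \leq \Big(\sum_i \|T x_i^*\|^p\Big)^{1/p} \leq \pi_p(T)\, \|(x_i^*)_{i=1}^n\|_{p,\textrm{weak}}.$$
By \eqref{def of p-norm} this gives $\|\phi\|_p \leq \pi_p(T) \leq 1$. Since $\phi \in H_{w^*}[X]$ by hypothesis, $\phi \in \Hp[X]$. Equivalently, $\|\phi\|_p \leq \|\phi_T\|_p = \pi_p(T)$, by monotonicity of $\|\cdot\|_p$ in $|\phi|$.

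\emph{(1) $\Rightarrow$ (2).} This is the substantive direction. Let $K = \ball(X^{**})$ with its weak$^*$ topology, a compact space. The hypothesis $\|\phi\|_p \leq 1$ says that for all finite families $(x_i^*)$,
$$\sum_i |\phi(x_i^*)|^p \leq \sup_{x^{**} \in K} \sum_i |\langle x^{**}, x_i^*\rangle|^p.$$
The left side is not linear in the $x_i^*$, but the Ky Fan / Hahn--Banach minimax argument from the proof of Pietsch's theorem uses only this inequality between finite sums. Concretely, the set of functions $g(x^{**}) = \sum_i |\langle x^{**},x_i^*\rangle|^p - \sum_i |\phi(x_i^*)|^p$ on $K$ is a convex cone, because concatenating families adds the functions. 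Each such $g$ satisfies $\sup_K g \geq 0$, so the cone is disjoint from the open negative cone of $C(K)$. Separation then yields a probability Radon measure $\mu$ on $K$ with
$$|\phi(x^*)|^p \leq \int_K |\langle x^{**}, x^*\rangle|^p \, d\mu(x^{**}) \qquad \text{for all } x^* \in X^*.$$
This step is where one must check carefully that the standard Pietsch argument goes through for a nonlinear positively homogeneous $\phi$. That is what \cite[Section 7]{JLTTT} (cf.\ \cite[Proposition 2.6]{OiGEO}) supplies, and I would cite it for this step.

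Now define $T : X^* \to L_p(\mu)$ by $Tx^* = \langle \cdot, x^*\rangle|_K$. This is the canonical map $X^* \to C(K) \to L_p(\mu)$, the first factor of norm at most $1$ and the second the formal identity. Hence $T$ is $p$-integral with $\iota_p(T) \leq 1$. By construction $|\phi(x^*)| \leq \|Tx^*\|_{L_p(\mu)} = \phi_T(x^*)$, which is (2).

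The main obstacle is the separation step in (1) $\Rightarrow$ (2): confirming the cone is convex and that homogeneity plays no hidden role. Weak$^*$ continuity of $\phi$ is not needed for this implication; it is used only so that $\phi$ belongs to $H_{w^*}[X]$.
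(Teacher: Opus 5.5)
Your proof is correct, but it is organized differently from the paper's. The paper does not prove the equivalence from scratch. It takes from \cite[Section 7]{JLTTT} (cf.\ \cite[Proposition 2.6]{OiGEO}) the description of $\|\cdot\|_p$ through domination by $\phi_T$ with $T$ $p$-summing, i.e.\ (1)$\Leftrightarrow$(3). It then applies the linear Pietsch Factorization Theorem to such a $T$ to replace it by a $p$-integral operator into $L_p(\mu)$, which gives (3)$\Rightarrow$(2).

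You instead run the Hahn--Banach separation from Pietsch's proof directly on the nonlinear function $\phi$. This produces the dominating probability measure on $\ball(X^{**})$ in one step, so (1)$\Rightarrow$(2) is self-contained and the other two implications are trivial.

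The step you flag as the obstacle is routine, and you need not defer it to the citation. Concatenating families only gives closure under addition. Positive homogeneity of $\phi$ supplies closure under positive scalars: the function attached to the family $(t^{1/p}x_i^*)$ is exactly $t g$. So homogeneity does play a role, and since $\phi \in H_{w^*}[X]$ it is available; the set of $g$'s is then a genuine convex cone. Separating this cone from the open cone $\{h : \max_K h < 0\}$ gives a nonzero positive Radon measure that is nonnegative on the cone. Normalizing it and testing on one-element families yields $|\phi(x^*)|^p \leq \int_K |\langle x^{**},x^*\rangle|^p \, d\mu(x^{**})$. The factorization $X^* \to C(K) \to L_p(\mu)$ then gives $\iota_p(T) \leq 1$, as you say.

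What each route buys: yours is a ``nonlinear Pietsch domination'' that makes the statement independent of the cited literature. The paper's is shorter and keeps the link with $p$-summing operators explicit, and that viewpoint (e.g.\ $\|\phi_T\|_p = \pi_p(T)$) is the one used throughout the rest of the paper.
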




Another simple observation:

\begin{lemma}\label{Cauchy seq}
	If $(f_i)$ is a Cauchy sequence in $\FBLp[X]$, converging pointwise to a function $f$, then $f$ belongs to $\FBLp[X]$, and is the limit of $(f_i)$ in the norm of the latter space.
\end{lemma}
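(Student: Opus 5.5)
Since $\FBLp[X]$ is complete, the Cauchy sequence $(f_i)$ has a norm limit $g \in \FBLp[X]$. It then suffices to show that $g = f$ pointwise, since that gives both conclusions at once: $f = g$ belongs to $\FBLp[X]$, and $f_i \to f$ in norm. The plan is to compare norm convergence with pointwise convergence via point evaluations.

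The key step is the observation that, for each $x^* \in X^*$, point evaluation at $x^*$ is continuous with respect to $\|\cdot\|_p$. Indeed, applying the definition \eqref{def of p-norm} with $n = 1$ and $x_1^* = x^*$ gives
$$|h(x^*)| \leq \|h\|_p \, \|x^*\|$$
for every $h$ with $\|h\|_p < \infty$. Here we use that the weak $p$-norm of a single vector $x^*$ equals $\|x^*\|$. In particular $|h(x^*)| \le \|h\|_p$ on $\ball(X^*)$, so $\|\cdot\|_p$-convergence implies uniform convergence on $\ball(X^*)$, and hence pointwise convergence on all of $X^*$ by positive homogeneity.

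Apply this with $h = f_i - g$: we get $f_i(x^*) \to g(x^*)$ for every $x^*$. Since by hypothesis also $f_i(x^*) \to f(x^*)$, uniqueness of limits in $\R$ gives $f = g$, which completes the argument.

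The only point needing care is that elements of $\FBLp[X]$, as the closure of $\FVL[X]$ in $\|\cdot\|_p$, really are functions on $X^*$, or on $\ball(X^*)$, and that norm limits agree with pointwise limits. The inequality above handles this, since it shows that the completion embeds into the functions (bounded on the ball) with the uniform limit as the representative. So there is no real obstacle; the argument is routine.
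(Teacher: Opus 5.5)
Your proof is correct. The paper records this lemma as a ``simple observation'' and gives no proof, so there is no argument to match. Your route is a natural one: take the norm limit $g \in \FBLp[X]$ given by completeness, use $|h(x^*)| \leq \|h\|_p \|x^*\|$ to get $f_i \to g$ pointwise, and conclude $g = f$.

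An equally short alternative does not use completeness of $\FBLp[X]$ at all. It only needs that $\|\cdot\|_p$ is lower semicontinuous under pointwise convergence. Fix $x_1^*, \ldots, x_n^*$ with $\|(x_k^*)\|_{p,\textrm{weak}} \leq 1$. Then $\big(\sum_k |f(x_k^*) - f_i(x_k^*)|^p\big)^{1/p} = \lim_j \big(\sum_k |f_j(x_k^*) - f_i(x_k^*)|^p\big)^{1/p} \leq \sup_{j \geq i} \|f_j - f_i\|_p$. Hence $\|f - f_i\|_p \to 0$. So $f$ is a $\|\cdot\|_p$-limit (and therefore also a uniform limit on $\ball(X^*)$) of elements of $\FBLp[X]$, which is $\|\cdot\|_p$-closed by definition.

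This also points to a small overstatement in your last paragraph. The point-evaluation bound only gives a contractive map from the abstract completion of $(\FVL[X], \|\cdot\|_p)$ into bounded functions on $\ball(X^*)$. To say the completion genuinely ``embeds'' as a function space, you also need this map to be injective. That requires an extra argument, for instance the lower semicontinuity just described. Since the paper takes the functional representation of $\FBLp[X]$ as known, this does not affect the validity of your proof.
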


The following lemma will be used repeatedly throughout the text: 

\begin{lemma}\label{weak* continuity}
For $S \in B(Z,Y)$, the following are equivalent:
\begin{enumerate}
   \item $S$ is compact.
   \item $S^*$ is weak$^*$ to norm continuous on bounded subsets of $Y^*$.
   \item The function $F : \ball(Y^*) \to \R : y^* \mapsto \|S^* y^*\|$ is weak$^*$ continuous at $0$.
\end{enumerate}
\end{lemma}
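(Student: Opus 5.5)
We prove the cycle (1) $\Rightarrow$ (2) $\Rightarrow$ (3) $\Rightarrow$ (1).

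\emph{(1) $\Rightarrow$ (2).} This is classical Schauder theory together with the Arzel\`a--Ascoli viewpoint. If $S$ is compact, then $K = \overline{S(\ball(Z))}$ is norm compact in $Y$. On $\ball(Y^*)$ the weak$^*$ topology coincides with the topology of uniform convergence on compact subsets of $Y$; this is the usual $\varepsilon$-net argument, using that $\ball(Y^*)$ is equicontinuous.

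So suppose $y_\alpha^* \to y^*$ weak$^*$ in a bounded set; rescaling, we may take the set to be $\ball(Y^*)$. Then $\sup_{y \in K} |\langle y_\alpha^* - y^*, y\rangle| \to 0$, that is,
\[
\|S^* y_\alpha^* - S^* y^*\| = \sup_{z \in \ball(Z)} |\langle y_\alpha^* - y^*, Sz\rangle| \to 0 .
\]
To see the uniform convergence, fix a finite $\varepsilon$-net $y_1, \ldots, y_m$ of $K$. For large $\alpha$ we have $|\langle y^*_\alpha - y^*, y_j\rangle| < \varepsilon$ for all $j$. By the triangle inequality, $|\langle y^*_\alpha - y^*, y\rangle| < 3\varepsilon$ for every $y \in K$.

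\emph{(2) $\Rightarrow$ (3).} The map $y^* \mapsto S^* y^*$ is weak$^*$-to-norm continuous on $\ball(Y^*)$, and the norm on $Z^*$ is continuous. Their composition $F$ is therefore weak$^*$ continuous on $\ball(Y^*)$, in particular at $0$.

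\emph{(3) $\Rightarrow$ (1).} This is the step requiring an actual idea; we argue by contrapositive.

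Suppose $S$ is not compact. Then $S(\ball(Z))$ is not totally bounded, so there are $\varepsilon > 0$ and $z_n \in \ball(Z)$ with $\|Sz_n - Sz_m\| \geq \varepsilon$ for $n \neq m$. We must produce a net in $\ball(Y^*)$ converging weak$^*$ to $0$ along which $\|S^* y^*\|$ stays bounded below. Equivalently, we show that for every weak$^*$ neighbourhood $U$ of $0$ in $\ball(Y^*)$, $\sup_{y^* \in U} \|S^* y^*\| \geq c > 0$ for some fixed $c$.

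A basic neighbourhood $U$ has the form
\[
U = \{ y^* \in \ball(Y^*) : |\langle y^*, y_j\rangle| < \delta, \ j = 1, \ldots, m \}
\]
for some $y_1, \ldots, y_m \in Y$ and $\delta > 0$. It contains $\ball(Y^*) \cap E^\perp$, where $E = \spn\{y_1, \ldots, y_m\}$ is finite-dimensional.

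Consider the quotient map $q : Y \to Y/E$. Since $qS$ differs from $S$ by a finite-rank perturbation in the appropriate sense, $qS$ is also not compact. Hence $\|qS\| \geq \|qS\|_{\text{ess}} > 0$; quantitatively, a standard argument shows $\|qS\| \geq \varepsilon/2$. Indeed, the images $qSz_n$ cannot all lie within $\varepsilon/4$ of a finite-dimensional set, because bounded subsets of $E$ are totally bounded.

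Identify $(Y/E)^*$ with $E^\perp$. Choose $y^* \in \ball(E^\perp)$ and $z \in \ball(Z)$ with $\langle y^*, Sz\rangle \geq \varepsilon/4$. Then $y^* \in U$ and $\|S^* y^*\| \geq \varepsilon/4$. Since $F(0) = 0$, the function $F$ is not weak$^*$ continuous at $0$.

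\textbf{Main obstacle.} The delicate point is the quantitative bound $\|qS\| \geq c$ independent of $E$. We handle it directly: if $\|qSz_n\| < \varepsilon/4$ for all $n$, then each $Sz_n$ lies within $\varepsilon/4$ of some $e_n \in E$ with $\|e_n\| \leq \|S\| + 1$. Extracting a Cauchy subsequence of $(e_n)$, which is possible because $E$ is finite-dimensional, contradicts the $\varepsilon$-separation of the $Sz_n$.
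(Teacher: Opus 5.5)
Your proof is correct. For (3)$\Rightarrow$(1) it is the paper's argument run in contrapositive. The paper shows directly that weak$^*$ continuity of $F$ at $0$ gives, for each $\varepsilon>0$, a finite-dimensional $E=\spn[y_1,\ldots,y_n]$ with $\|q_E S\|=\|S^*|_{E^\perp}\|\le\varepsilon$, and concludes that $S$ is compact. You start instead from an $\varepsilon$-separated sequence $(Sz_n)$ and show $\|q_E S\|\ge\varepsilon/4$ for every finite-dimensional $E$; this spells out the paper's closing step that such subspaces force compactness.

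The genuine difference is in (1)$\Rightarrow$(2). The paper lifts $S$ through a quotient map $q:\ell_1(I)\to Z$. It uses the approximation property of $\ell_\infty(I)=\ell_1(I)^*$ to approximate the compact operator $Sq$ in norm by finite-rank operators $T_u$. Each $T_u^*$ is weak$^*$-to-norm continuous, so the uniform limit $q^*S^*$ is too on bounded sets, and the paper finishes by noting that $q^*$ is an isometry. Your route is the classical Schauder-type argument: bounded weak$^*$-convergent nets converge uniformly on the norm-compact set $\overline{S(\ball(Z))}$, via an explicit $\varepsilon$-net estimate. It is elementary and self-contained. The paper's proof relies on a nontrivial approximation-property theorem, and in exchange exhibits $q^*S^*$ as a norm limit of weak$^*$-continuous finite-rank maps, which is more than the lemma needs.

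Two small clean-ups. First, the remarks that $qS$ is a finite-rank perturbation of $S$ and that $\|qS\|\ge\varepsilon/2$ are unnecessary, since your last paragraph proves the bound you actually use. Second, to get $\langle y^*,Sz\rangle\ge\varepsilon/4$ exactly, take $z=z_n$ for some $n$ with $\|qSz_n\|\ge\varepsilon/4$; your argument shows such an $n$ exists. Then pair it with a Hahn--Banach norming functional for $qSz_n$ in $(Y/E)^*=E^\perp$.
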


\begin{proof}
$(1) \Rightarrow (2)$:
Let $q : \ell_1(I) \to Z$ be a quotient map, where $I$ is a sufficiently large index set. The space $\ell_\infty(I) = \ell_1(I)^*$ has the (Metric) Approximation Property, hence, by \cite[Theorem 1.e.5]{LT1}, there exists a net of finite rank maps $T_u : \ell_1(I) \to Y$, converging to $Sq$ in norm. Clearly $T_u^*$ is weak$^*$ to norm continuous, hence so is $q^* S^*$. Now recall that, for $y^* \in Y^*$, $\|S^* y^*\| = \|q^* S^* y^*\|$. 

$(2) \Rightarrow (3)$ is straightforward.

$(3) \Rightarrow (1)$: If $S$ is as in (3), then for any $\vr > 0$ there exist $y_1, \ldots, y_n \in Y$  so that $\|S^* y^*\| < \vr$ whenever $y^* \in \ball(Y^*)$ satisfies $\vee_i |\langle y^*, y_i \rangle| < 1$.
Then in particular, $\|S^*|_{E^\perp}\| \leq \vr$, where $E = \spn[y_1, \ldots, y_n]$. The former quantity equals $\|S^* q_E^*\| \leq \vr$; here $q_E : Y \to Y/E$ is the quotient map, and $q_E^* : E^\perp \to Y^*$ is the canonical embedding.
So for every $\vr > 0$ there exists a finite dimensional $E \subset Y$ so that $\|q_E S\| \leq \vr$. This implies that $S$ is compact.
\end{proof}

Combining the above with the definition of the norm in $\Hp[X]$, we obtain:

\begin{corollary}\label{compact_adjoint}
    If $T \in B(Y, X)$ is compact and $T^* \in \Pi_p(X^*, Y^*)$, then $\phi_{T^*} \in \Hp[X]$ and $\|\phi_{T^*}\|_p = \pi_p(T^*)$.
\end{corollary}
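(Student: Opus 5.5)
The proof will verify, one by one, the three conditions defining membership in $\Hp[X]$: positive homogeneity, weak$^*$ continuity on bounded sets, and finiteness of the $p$-norm. The function in question is $\phi_{T^*}(x^*) = \|T^* x^*\|$, defined on $X^*$ with values in $[0,\infty)$. Positive homogeneity is immediate, since $\|T^*(t x^*)\| = t\|T^* x^*\|$ for $t \geq 0$.

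For weak$^*$ continuity on bounded sets, I will apply \Cref{weak* continuity} with $S = T$, $Z = Y$, and the target space $X$ in place of $Y$. Since $T$ is compact, implication $(1)\Rightarrow(2)$ of that lemma shows that $T^* : X^* \to Y^*$ is weak$^*$-to-norm continuous on bounded subsets of $X^*$. The norm on $Y^*$ is norm continuous, so composing it with $T^*$ shows that $x^* \mapsto \|T^*x^*\|$ is weak$^*$ continuous on bounded subsets of $X^*$. Hence $\phi_{T^*} \in H_{w^*}[X]$.

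For the norm, I will use the observation made just before \Cref{FBLp norm}: for any operator $R : X^* \to Z$, the definition \eqref{def of p-norm} gives $\|\phi_R\|_p = \pi_p(R)$. Here
$$\Big(\sum_i |\phi_R(x_i^*)|^p\Big)^{1/p} = \Big(\sum_i \|R x_i^*\|^p\Big)^{1/p},$$
and the weak $p$-summing norm $\|(x_i^*)_i\|_{p,\textrm{weak}}$ appearing in \eqref{def of p-norm} is exactly the one used in the definition of $\pi_p(R)$. So the two infima are taken over the same set of constants $C$, and they agree. Applying this with $R = T^*$ gives $\|\phi_{T^*}\|_p = \pi_p(T^*) < \infty$, which places $\phi_{T^*}$ in $\Hp[X]$ with the asserted norm.

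No step here is a real obstacle. The only point to check is that the expression $\|(\alpha_i)\|_{\ell_{p'}}$-dual sup in the definition of $\|\cdot\|_{p,\textrm{weak}}$ coincides with the usual weak $\ell_p$ norm $\sup_{x^{**}\in \ball(X^{**})} (\sum_i |x^{**}(x_i^*)|^p)^{1/p}$ used to define $\pi_p$. This follows from $\ell_p$–$\ell_{p'}$ duality, so the identity $\|\phi_{T^*}\|_p = \pi_p(T^*)$ holds exactly.
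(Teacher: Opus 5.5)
Your proof is correct and follows the paper's approach. The paper derives this corollary in one line, by combining implication $(1)\Rightarrow(2)$ of the weak$^*$-continuity lemma, which gives $\phi_{T^*} \in H_{w^*}[X]$, with the identity $\|\phi_R\|_p = \pi_p(R)$ read directly off the definition of the $p$-norm.
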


For finite rank operators, more can be said:

\begin{lemma}\label{fin rank ops}
Suppose $X$ is a Banach space, $E$ is finite dimensional, and $u \in B(X^*,E)$ is weak$^*$ continuous. Then $\phi_u \in \FBLp[X]$, and $\|\phi_u\| = \pi_p(u)$. Moreover, for every $\delta > 0$ there exists $\psi \in \FVL[X]_+$ so that $(1-\delta) \psi \leq (1-\delta) \phi_u \leq \psi$.
\end{lemma}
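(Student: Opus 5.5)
Since $u$ is weak$^*$ continuous with finite-dimensional range, it is the adjoint of a finite rank operator. Concretely, fix a basis $e_1,\dots,e_n$ of $E$ with biorthogonal functionals $e_1^*,\dots,e_n^*\in E^*$. Each $e_j^* \circ u$ is a weak$^*$ continuous functional on $X^*$, so it equals $\delta_{x_j}$ for some $x_j \in X$. Hence $u x^* = \sum_j \langle x^*, x_j\rangle e_j$.

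\emph{Step 1: $\phi_u$ is a limit of lattice expressions.} We have
\[
\phi_u(x^*) = \Big\| \sum_j \delta_{x_j}(x^*) e_j \Big\| = N\big(\delta_{x_1}(x^*),\dots,\delta_{x_n}(x^*)\big),
\]
where $N$ is the norm of $E$ transported to $\R^n$. The norm $N$ is the supremum of the linear functionals $a \mapsto \sum_j a_j \langle e^*, e_j\rangle$ over $e^*$ in the unit ball of $E^*$. Compactness of $\ball(E^*)$ gives, for each $\delta>0$, a finite $\delta$-net $e^*_1,\dots,e^*_m$ of the dual sphere. We then set
\[
\psi_0 = \bigvee_k \Big| \sum_j \langle e^*_k, e_j\rangle\, \delta_{x_j}\Big| = \bigvee_k |\delta_{u^* e_k^*}| .
\]
Here $u^*e_k^*$ lies in $X$, by weak$^*$ continuity of $u$. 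Thus $\psi_0 \in \FVL[X]_+$, and pointwise $(1-\delta)\phi_u \le \psi_0 \le \phi_u$. The standard net argument gives $N(a) \le \max_k |e_k^*(a)| + \delta N(a)$, hence $\max_k|e_k^*(a)| \ge (1-\delta)N(a)$. Putting $\psi = \psi_0/(1-\delta)$ yields $(1-\delta)\psi \le \phi_u \le \psi$, which (up to multiplying by $1-\delta$) is the claimed sandwich.

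\emph{Step 2: membership and norm.} The sandwich shows $\|\phi_u - \psi_0\|_p \le \delta \|\phi_u\|_p$. Since $0\le \phi_u - \psi_0 \le \delta\phi_u$ and $\|\cdot\|_p$ is a lattice norm, this bound holds. The quantity $\|\phi_u\|_p = \pi_p(u)$ is finite, because $u$ has finite rank and is therefore $p$-summing. Taking $\delta = 1/k$ produces a sequence in $\FVL[X]$ converging to $\phi_u$ in $\|\cdot\|_p$; it converges pointwise as well. By \Cref{Cauchy seq}, $\phi_u \in \FBLp[X]$. The norm equality $\|\phi_u\|_p = \pi_p(u)$ is just the remark preceding \Cref{FBLp norm} (definition \eqref{def of p-norm}).

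\emph{Main obstacle.} The only delicate point is the direction of the inequalities in the $\delta$-net estimate. One must check that the finite maximum of dual functionals approximates the norm uniformly from below with multiplicative error $\delta$, and this is exactly what the estimate above gives. The rest is bookkeeping, together with identifying $u^*e^*_k$ as elements of $X$ via weak$^*$ continuity.
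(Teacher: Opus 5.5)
Your proof is correct and follows essentially the paper's route: the paper takes a contraction $v : E \to \ell_\infty^N$ with $\|ve\| \geq (1-\delta)\|e\|$, which is exactly what your $\delta$-net of the dual sphere produces, sets $\psi(x^*) = \|vux^*\| = \bigvee_j |\delta_{u^* e_j^*}|$ (your $\psi_0$), and concludes from $0 \leq \phi_u - \psi \leq \delta(1-\delta)^{-1}\phi_u$ together with the lattice property of $\|\cdot\|_p$. One correction: the stated sandwich $(1-\delta)\psi \leq (1-\delta)\phi_u \leq \psi$ unpacks to $(1-\delta)\phi_u \leq \psi \leq \phi_u$, so your unscaled $\psi_0$ already satisfies it verbatim, and the rescaled function $\psi_0/(1-\delta)$ does not, so drop the final rescaling.
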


\begin{proof}
By scaling, we can assume that $\pi_p(u) = 1$. It is straightforward that $\phi_u$ is weak$^*$ continuous on bounded sets, and $\|\phi_u\|_{p} = \pi_p(u) = 1$. It remains to show that $\phi_u$ lies in $\FBLp[X]$ -- that is, for any $\vr > 0$ there exists $\psi \in \FVL[X]$ so that $\|\phi_u - \psi\| < \vr$.
To this end, find $N$ and a contraction $v \in B(E,\ell_\infty^N)$, so that $\|ve\| \geq 1 - \delta = 1 - \vr/2$ for any $e \in E$ of unit norm. Write $ve = (\langle e_j^*,e \rangle)_{j=1}^N$. Let $\psi(x^*) = \|vux^*\|$, then clearly $\psi = \vee_{j=1}^N \big| \delta_{u^* e_j^*} \big|$.
The weak$^*$-continuity of $u$ implies $u^* e_j^* \in X$, so $\psi \in \FVL[X]$. Also, $\psi \leq \phi_u \leq (1-\delta)^{-1} \psi$, hence $0 \leq \phi_u - \psi \leq \delta(1-\delta)^{-1} \phi_u$, giving us the desired estimate for $\|\phi_u - \psi\|$.
\end{proof}

\begin{corollary}\label{nuclear in FBL}
    If $u: X^* \to Y$ is approximable in the $p$-summing norm by a sequence of weak$^*$ continuous finite-rank operators $u_n : X^* \to Y$, then $\phi_u \in \FBLp[X]$ with $\|\phi_u\| = \pi_p(u)$.
\end{corollary}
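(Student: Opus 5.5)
The plan is to combine \Cref{fin rank ops} with \Cref{Cauchy seq}. By \Cref{fin rank ops}, each $\phi_{u_n}$ belongs to $\FBLp[X]$, and $\|\phi_{u_n}\| = \pi_p(u_n)$. Here $u_n$ is a weak$^*$ continuous finite rank operator, so it maps $X^*$ into a finite dimensional subspace $E_n \subset Y$. Since $\pi_p$ does not depend on the target space for subspaces, the lemma applies with $E = E_n$.

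Next we will show that $(\phi_{u_n})$ is Cauchy in $\|\cdot\|_p$. Pointwise, the reverse triangle inequality gives
\[
|\phi_{u_n}(x^*) - \phi_{u_m}(x^*)| = \big| \|u_n x^*\| - \|u_m x^*\| \big| \leq \|(u_n - u_m) x^*\| = \phi_{u_n - u_m}(x^*).
\]
The norm $\|\cdot\|_p$ from \eqref{def of p-norm} depends only on $|f|$ and is monotone in $|f|$. Hence
\[
\|\phi_{u_n} - \phi_{u_m}\|_p \leq \|\phi_{u_n - u_m}\|_p = \pi_p(u_n - u_m) \to 0.
\]
The same estimate with $u$ in place of $u_m$ gives $\|\phi_{u_n} - \phi_u\|_p \leq \pi_p(u_n - u) \to 0$. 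Since $\|\cdot\|_p \geq \|\cdot\|_\infty$ on $\ball(X^*)$ (take $n=1$ in the definition), the sequence $\phi_{u_n}$ converges to $\phi_u$ pointwise, even uniformly on $\ball(X^*)$. Strictly speaking, $\phi_u$ is defined on all of $X^*$, but everything is positively homogeneous, so pointwise convergence on $\ball(X^*)$ suffices.

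\Cref{Cauchy seq} then yields $\phi_u \in \FBLp[X]$, with $\phi_u = \lim_n \phi_{u_n}$ in norm. For the norm identity, $\|\phi_u\| = \lim_n \|\phi_{u_n}\| = \lim_n \pi_p(u_n) = \pi_p(u)$. Alternatively, $\|\phi_u\|_p = \pi_p(u)$ holds directly by the identity $\|\phi_T\|_p = \pi_p(T)$ noted before \Cref{FBLp norm}, once we know $\phi_u$ is in the space.

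The only delicate point is the monotonicity of $\|\cdot\|_p$ with respect to $|f|$, used to pass from the pointwise bound to the norm bound. This is immediate from \eqref{def of p-norm}, since the defining inequality involves only the values $|f(x_i^*)|$. No real obstacle is expected.
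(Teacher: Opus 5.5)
Your proof is correct and is the argument the paper intends. The paper gives no explicit proof, but places the corollary right after \Cref{fin rank ops} with \Cref{Cauchy seq} prepared for exactly this use: each $\phi_{u_n}$ lies in $\FBLp[X]$, the pointwise bound $|\phi_{u_n}-\phi_{u_m}| \le \phi_{u_n-u_m}$ makes the sequence $\|\cdot\|_p$-Cauchy with pointwise limit $\phi_u$, \Cref{Cauchy seq} puts $\phi_u$ in $\FBLp[X]$, and the identity $\|\phi_u\| = \pi_p(u)$ follows from $\|\phi_T\|_p = \pi_p(T)$, as you note.
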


We shall see later that the conclusion $\phi_u \in \FBLp[X]$ may fail if $u$ is merely approximable (in the operator norm).

\section{Introduction to monotonic boundedness}\label{m-b}

We gather the essential facts about monotonically bounded Banach lattices. For applications of this notion to regular operators, see \cite[Section 4]{DLOT}, and for other related properties, see \cite{Taylor}.

\begin{proposition}
 If a Banach lattice $X$ is monotonically bounded (respectively, countably monotonically bounded), then it is $\lambda$-monotonically bounded (respectively, countably $\lambda$-monotonically bounded) for some $\lambda$.
\end{proposition}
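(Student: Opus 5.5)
We argue by contradiction, with a gliding-hump construction that glues together nets with worse and worse ratios. Suppose $X$ is monotonically bounded but not $\lambda$-monotonically bounded for any $\lambda$. Then for every $n \in \N$ there is an increasing net $(z^{(n)}_\alpha)_{\alpha \in A_n} \subset X_+$ with $\sup_\alpha \|z^{(n)}_\alpha\| \leq 1$ such that every upper bound $z$ of this net satisfies $\|z\| > 4^n$. Set
$$
w^{(n)}_\alpha = 2^{-n} z^{(n)}_\alpha .
$$
Then $\sup_\alpha \|w^{(n)}_\alpha\| \leq 2^{-n}$, and every upper bound of $(w^{(n)}_\alpha)_\alpha$ has norm greater than $2^n$.

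Next we build one net out of all of these. Let $A = \prod_n A_n$, ordered coordinatewise; this is a directed set. For $\alpha = (\alpha_n) \in A$ and $N \in \N$, put $u_{(\alpha,N)} = \sum_{n=1}^N w^{(n)}_{\alpha_n}$, indexed by the directed set $A \times \N$ with the product order. Each summand increases in its own index, so $(u_{(\alpha,N)})$ is an increasing net in $X_+$. Its norm is at most $\sum_n 2^{-n} \leq 1$. By monotonic boundedness it has an upper bound $u \in X_+$.

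Now fix $n$. For any $\beta \in A_n$, choose $\alpha \in A$ with $\alpha_n = \beta$ (the other coordinates are arbitrary) and take $N = n$. Since all the $w$'s are positive,
$$
u \geq u_{(\alpha,n)} \geq w^{(n)}_\beta .
$$
So $u$ is an upper bound of $(w^{(n)}_\beta)_\beta$, which forces $\|u\| > 2^n$. As this holds for every $n$, we reach a contradiction, and the $\lambda$-version follows.

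The main point of care is that the product index set gives a genuinely increasing net. This holds because increasing one coordinate increases exactly one summand and leaves the others unchanged. Note also that we do not need the infinite sum to converge in norm: the finite partial sums are all we use.

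For the countable version the same argument applies with sequences. The nets $(z^{(n)}_k)_{k \in \N}$ are now increasing sequences, and we use the diagonal sequence $u_k = \sum_{n=1}^k w^{(n)}_k$. This sequence is increasing, since both the number of summands and each summand grow with $k$, and it is bounded by $1$. Let $u$ be an upper bound. Given $n$ and any index $j$, take $k \geq \max(j,n)$; then
$$
u \geq u_k \geq w^{(n)}_k \geq w^{(n)}_j .
$$
Hence $u$ bounds the $n$-th sequence, and the same contradiction follows.
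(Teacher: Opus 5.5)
Your proof is correct and is essentially the paper's argument. The paper glues the nets using the directed set of finite sequences $(\alpha_1,\ldots,\alpha_m)$ with $\alpha_i \in A_i$; this is the order-preserving image of your $\prod_n A_n \times \N$ under $(\alpha,N)\mapsto(\alpha_1,\ldots,\alpha_N)$, and the paper sums $\sum_{i\le m} z^{(i)}_{\alpha_i}$ exactly as you do. Your explicit diagonal sequence $u_k=\sum_{n\le k} w^{(n)}_k$ also correctly handles the countable case, which the paper only says is treated similarly.
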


\begin{proof}
 We prove the ``monotonically bounded'' case, the countable setting is handled similarly.
 
 Suppose, for the sake of contradiction, that $X$ is monotonically bounded, but not $\lambda$-monotonically bounded for any $\lambda$. Then for every $n$ these exists an increasing net $(z^{(n)}_\alpha)_{\alpha \in {\mathcal{A}}_n}$ so that $\sup_{\alpha \in {\mathcal{A}}_n} \|z^{(n)}_\alpha\| < 2^{-n}$, yet $\|z_0\| > 2^n$ whenever $z_0$ is an upper bound for this net.
 
 Let $\mathcal{A}$ the set of all finite sequences $(\alpha_1, \ldots, \alpha_m)$, with $m \in \N$ and $\alpha_i \in {\mathcal{A}}_i$. Equip $\mathcal{A}$ with the ``lexicographic'' order: $(\alpha_1, \ldots, \alpha_m) \prec (\beta_1, \ldots, \beta_n)$ if $m \leq n$, and $\alpha_i \leq \beta_i$ for $1 \leq i \leq m$. Then the net $z_{(\alpha_1, \ldots, \alpha_m)} = \sum_{i=1}^m z^{(i)}_{\alpha_i}$ is upward directed, and norms of its elements are bounded by $1$. On the other hand, it cannot have an upper bound.
\end{proof}

We can also observe a connection between monotonic boundedness and other related properties.

\begin{proposition}\label{connections}
 \begin{enumerate}
  \item If a Banach lattice $X$ is $\lambda$-monotonically bounded, then it has the $\lambda$-Fatou property. The same holds for the ``countable'' versions of these properties.
  \item If $X$ is monotonically complete, then it is monotonically bounded.
 \end{enumerate}
The converse statements are false.
\end{proposition}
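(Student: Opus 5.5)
For (1), I would argue directly from the definitions. Let $\mathcal{S} \subset X_+$ be upward directed with supremum $z_0$, and regard $\mathcal{S}$ as an increasing net indexed by itself. By $\lambda$-monotonic boundedness there is an upper bound $u$ of $\mathcal{S}$ with $\|u\| \leq \lambda \sup_{z \in \mathcal{S}} \|z\|$. Since $z_0$ is the least upper bound, $0 \leq z_0 \leq u$, and therefore $\|z_0\| \leq \|u\|$. This is exactly the $\lambda$-Fatou property. The countable version is identical, with increasing sequences in place of directed sets.

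For (2), recall that $X$ is monotonically complete if every norm-bounded increasing net in $X_+$ has a supremum. A supremum is in particular an upper bound, so monotonic boundedness follows at once. (The quantitative constant $\lambda$ is then supplied by the preceding proposition if one wants it.)

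The real content is in the counterexamples for the converses.

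For the converse of (2), I want a lattice that is monotonically bounded but not monotonically complete. I would take $\ell_\infty$ with a non-complete norm-bounded increasing net, or more simply $c$, the space of convergent sequences.
\begin{itemize}
\item $c$ is monotonically bounded: a norm-bounded increasing net in $c_+$ is dominated by $\|\cdot\|_\infty$-sup times the constant sequence $\mathbf{1}$.
\item $c$ is not monotonically complete: the partial indicators $\mathbf{1}_{\{2,4,\ldots,2n\}}$ form a bounded increasing sequence with no supremum in $c$. Any upper bound $u \in c$ must have $\lim u \geq 1$, and then the elements $u - \tfrac12 e_k$ at odd $k$ (large) are smaller upper bounds, so no least one exists.
\end{itemize}
This works for the countable version as well.

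For the converse of (1), I need a lattice with the Fatou property that is not monotonically bounded. I would take $c_0$.
\begin{itemize}
\item $c_0$ has the Fatou property: suprema in $c_0$ are coordinatewise, and the sup norm is determined coordinatewise, so $\|z_0\| = \sup \|z\|$.
\item $c_0$ is not monotonically bounded: the sequence $\mathbf{1}_{\{1,\ldots,n\}}$ is norm-bounded and increasing, but an upper bound would have to be $\geq 1$ in every coordinate, which is impossible in $c_0$.
\end{itemize}
The countable version fails in the same way.

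The main step to check carefully is the Fatou claim for $c_0$: that the supremum of a directed set, whenever it exists, really is the coordinatewise supremum. This holds because $c_0$ is an ideal in $\mathbb{R}^{\N}$ containing the unit vectors: if the coordinatewise supremum were strictly less than $z_0$ at some coordinate $k$, then subtracting a small multiple of $e_k$ from $z_0$ would give a smaller upper bound.
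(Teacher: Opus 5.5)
Your proof is correct. The forward implications are argued just as the paper intends; it simply calls them straightforward. The difference lies in the counterexamples.

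For the converse of (1), the paper uses $C_0(0,1]$, the continuous functions on $[0,1]$ vanishing at $0$, and cites Wickstead for its Fatou property. For the converse of (2) it uses $C[0,1]$. You use the discrete analogues $c_0 = C_0(\N)$ and $c = C(\N \cup \{\infty\})$.

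What your choice buys is that both lattices are atomic. Since every unit vector $e_k$ lies in the lattice, suprema of directed sets in $c_0$ are coordinatewise, exactly as you argue. The Fatou property is then a one-line check; it also follows from order continuity of the norm of $c_0$. In $C_0(0,1]$, by contrast, suprema are not pointwise, and one genuinely needs Wickstead's theorem. The paper's examples show that the phenomena do not depend on atomicity or order continuity, but the statement does not require that.

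One slip: your passing suggestion of ``$\ell_\infty$ with a non-complete norm-bounded increasing net'' cannot work. $\ell_\infty = \ell_1^*$ is a dual Banach lattice and hence monotonically complete; every norm-bounded increasing net has its coordinatewise supremum in $\ell_\infty$. You do not actually use it, and $c$ does the job.

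In (1), also note explicitly that if $\sup_{z \in \mathcal{S}} \|z\| = \infty$ there is nothing to prove. Monotonic boundedness applies only to norm-bounded nets.
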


Recall that $X$ is called \emph{monotonically complete} if every norm-bounded increasing net in $X$ has a supremum. For properties of such lattices, see \cite[Section 2.4]{M-N}.

\begin{proof}
 The forward statements are straightforward. It remains to provide counterexamples.
 
 (1) Let $X$ be the space of continuous functions on $[0,1]$ which vanish at $0$. By \cite{Wick07}, $X$ has the Fatou property. However, it is easy to see it isn't monotonically bounded.
 
 (2) It is easy to see that $X = C[0,1]$ works. 
\end{proof}

By \cite[Theorem 2.4.19]{M-N}, any dual of a Banach lattice is monotonically complete and has the Fatou property; consequently, it is also 1-monotonically bounded.
On the other end of the spectrum, an AM-space is 1-monotonically bounded if and only if it is lattice isomorphic to a $C(K)$ space, for some Hausdorff compact $K$ \cite{ABT}.

Finally, we examine monotonic boundedness in separable lattices. The proof of the following proposition closely imitates that of \cite[Lemma 2.5]{ABT}.

\begin{proposition}\label{sep mon bound}
 A separable Banach lattice is $\lambda$-monotonically bounded if and only if it is countably $\lambda$-monotonically bounded.
\end{proposition}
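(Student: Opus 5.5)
One direction is immediate: an increasing sequence is an increasing net, so $\lambda$-monotonic boundedness implies the countable version. For the converse, let $Z$ be separable and countably $\lambda$-monotonically bounded, and let $(z_\alpha)_{\alpha \in \mathcal{A}} \subset Z_+$ be increasing with $\sup_\alpha \|z_\alpha\| \leq c$. We must produce an upper bound $z_0$ with $\|z_0\| \leq \lambda c$. The idea, as in \cite[Lemma 2.5]{ABT}, is to replace the net by a cofinal-in-norm increasing sequence, bound that sequence, and then show the bound dominates the whole net.

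\textbf{Step 1.} The set $\{z_\alpha\}$ is separable, so pick a countable dense subset $\{z_{\alpha_k}\}_{k \in \N}$. Using directedness, choose indices $\beta_1 \leq \beta_2 \leq \cdots$ with $\beta_n \geq \alpha_1, \ldots, \alpha_n$ (take $\beta_n$ an upper bound of $\beta_{n-1}$ and $\alpha_n$). Then $(z_{\beta_n})$ is an increasing sequence in $Z_+$ with norms at most $c$. By countable $\lambda$-monotonic boundedness it has an upper bound $z_0$ with $\|z_0\| \leq \lambda c$.

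\textbf{Step 2} (the main point). We show $z_0 \geq z_\alpha$ for every $\alpha$. Fix $\alpha$ and $\vr > 0$, and choose $k$ with $\|z_\alpha - z_{\alpha_k}\| < \vr$. Take $\gamma \geq \alpha, \alpha_k$. By density there is $m$ with $\|z_\gamma - z_{\alpha_m}\| < \vr$. Now $z_{\alpha_m} \leq z_{\beta_m} \leq z_0$, so
\[
(z_\gamma - z_0)_+ \leq (z_\gamma - z_{\alpha_m})_+ \leq |z_\gamma - z_{\alpha_m}|,
\]
which gives $\|(z_\gamma - z_0)_+\| < \vr$. Since $z_\alpha \leq z_\gamma$, we have $(z_\alpha - z_0)_+ \leq (z_\gamma - z_0)_+$, hence $\|(z_\alpha - z_0)_+\| < \vr$. (Here we only need $\alpha \leq \gamma$, so the approximant $\alpha_k$ is not actually used.) Letting $\vr \to 0$ yields $(z_\alpha - z_0)_+ = 0$, i.e.\ $z_\alpha \leq z_0$.

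The only delicate point is Step 2. Density of $\{z_{\alpha_k}\}$ does not by itself give order domination. The fix is to pass to a larger index $\gamma$ and use monotonicity of $x \mapsto (x - z_0)_+$ together with the norm continuity of the lattice operations. This uses nothing beyond separability of $Z$, which makes the subset $\{z_\alpha\}$ separable. Hence $z_0$ is an upper bound of the net with $\|z_0\| \leq \lambda c$, proving the claim.
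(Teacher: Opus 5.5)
Your argument is correct and follows the approach the paper indicates; the paper gives no details, saying only that its proof imitates \cite[Lemma 2.5]{ABT}. You dominate a countable norm-dense subfamily $\{z_{\alpha_k}\}$ by an increasing sequence, bound that sequence, and use that $\{x : x \leq z_0\}$ is norm closed. The auxiliary index $\gamma$ in Step 2 is unnecessary: density applies to $z_\alpha$ itself and gives $(z_\alpha - z_0)_+ \leq |z_\alpha - z_{\alpha_m}|$ directly.
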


We should also mention \cite[Proposition 12.3]{dPWi}: if a monotonically bounded Banach lattice $Y$ is embedded into a Banach lattice $X$ as an ideal, then $Y$ is a projection band in $X$.

\section{A class of monotonically bounded free lattices}\label{+nakano}

Throughout this section, we assume that $1 \leq p < \infty$. Our first goal is to determine under which assumptions $\Hp[X]$ is monotonically bounded. As the next proposition shows, this turns out to be closely related to the weak Radon-Nikodym property of an underlying Banach space $X$ (see \cite[Section 7.4]{Bour} for background on the wRNP).
\begin{theorem}\label{wRNP_Hp}
 If $X$ is a dual Banach space with the wRNP, then $\Hp[X]$ is 1-monotonically bounded.
\end{theorem}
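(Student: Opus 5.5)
Given an increasing net $(f_\alpha)\subset \Hp[X]_+$ with $\|f_\alpha\|_p\le 1$, we produce an upper bound $f$ in $\Hp[X]$ with $\|f\|_p\le 1$. The natural candidate is built from operators, via \Cref{FBLp norm}. For each $\alpha$ that result gives a probability measure $\mu_\alpha$ and an operator $T_\alpha: X^*\to L_p(\mu_\alpha)$ with $\iota_p(T_\alpha)\le 1$ and $f_\alpha\le \phi_{T_\alpha}$. By Pietsch factorization we may assume $T_\alpha = j_\alpha \circ S_\alpha$, where $S_\alpha : X^*\to C(K)$ (with $K=\ball(X^{**})$) is the canonical map and $j_\alpha : C(K)\to L_p(\mu_\alpha)$ is the formal identity, $\mu_\alpha$ a probability measure on $K$. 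Thus $f_\alpha(x^*)\le \big(\int_K |\langle x^{**},x^*\rangle|^p\,d\mu_\alpha(x^{**})\big)^{1/p}$.

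The measures $\mu_\alpha$ lie in the weak$^*$-compact set of probability measures on $K$, so we pass to a subnet converging weak$^*$ to some probability measure $\mu$. Since each $f_\alpha$ is dominated by $f_\beta$ for $\beta\ge\alpha$, for each fixed $x^*$ we get $f_\alpha(x^*)\le \limsup_\beta \|x^*\|_{L_p(\mu_\beta)}$. The function $x^{**}\mapsto|\langle x^{**},x^*\rangle|^p$ is weak$^*$ continuous on $K$, so this equals $\|x^*\|_{L_p(\mu)}$. Hence $\phi:=\phi_T$ with $T=j_\mu\circ S: X^*\to L_p(\mu)$ satisfies $f_\alpha\le\phi$ pointwise for all $\alpha$, and $\pi_p(T)\le 1$. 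The trouble is that $\phi_T$ need not be weak$^*$ continuous on bounded sets of $X^*$, so it may fail to lie in $H_{w^*}[X]$. This is where the hypotheses on $X$ enter.

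Write $X=Z^*$. The main obstacle is to replace $T$ by an operator $T'$ with $\pi_p(T')\le1$, $\phi_{T'}\ge f_\alpha$ for all $\alpha$, and $\phi_{T'}$ weak$^*$ continuous on bounded sets. By \Cref{weak* continuity} (applied with $S^*=T'$), this amounts to $T'$ being the adjoint of a compact operator into $X$. The plan is to use the wRNP of the dual space $X=Z^*$: by the theory in \cite[Section 7.4]{Bour}, this is equivalent to $Z$ containing no copy of $\ell_1$, or equivalently to Pettis-integral representability of operators $L_1\to X$. Consider the restriction of $T$ to $X^*$, composed with $L_p(\mu)\hookrightarrow L_1(\mu)$. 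Dualizing, we get an operator $L_{p'}(\mu)\to X^{**}$. We would try to show that, after restricting the relevant functionals to the weak$^*$-closure of $\ball(X)$ — or better, replacing $\mu$ by a measure concentrated on $\ball(X)$ via its barycentric/Pettis representation — this operator takes values in $X$. It is then representable by a Pettis-integrable density $g:\Omega\to X$, giving $T'x^*=\langle x^*,g(\cdot)\rangle$. The wRNP, via the no-$\ell_1$ condition, should make $T'$ the adjoint of a map $L_{p'}(\mu)\to X$ that is compact when restricted to the $L_\infty$ ball, or weakly compact and $p$-summing. Then $\phi_{T'}$ is weak$^*$ continuous on bounded sets by \Cref{compact_adjoint}.

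Finally, we must check that $\phi_{T'}\ge \phi_T$ on the relevant functionals, or directly that $\phi_{T'}\ge f_\alpha$. Since each $f_\alpha$ is weak$^*$ continuous, the inequality $f_\alpha(x^*)\le \|x^*\|_{L_p(\mu_\alpha)}$ need only be tested against measures supported on $\ball(X)$, using weak$^*$ density of $\ball(X)$ in $K$ and Goldstine. Then $f=\phi_{T'}\in \Hp[X]$, $\|f\|_p=\pi_p(T')\le 1$, and $f$ is the desired upper bound. I expect the construction of $T'$ — turning the limit measure on $\ball(X^{**})$ into a genuinely weak$^*$-continuous majorant using the wRNP — to be the main difficulty.
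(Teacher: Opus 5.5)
Your first half is correct, and it is a tidier substitute for the paper's aggregation step. The paper takes an ultraproduct of the $T_i$ along an order-consistent ultrafilter (\Cref{ultraproduct}) and then applies Pietsch factorization again. You instead take a weak$^*$ cluster point $\mu$ of the Pietsch measures on $K=\ball(X^{**})$ and use that $x^{**}\mapsto|\langle x^{**},x^*\rangle|^p$ lies in $C(K)$. This yields $T=j_\mu S$ with $\pi_p(T)\le 1$ and $\phi_T\ge f_\alpha$ for all $\alpha$, which is exactly where the paper stands after its ultraproduct step.

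The gap is the second half, where the wRNP actually enters: $T'$ is never constructed. The route you sketch (dualizing to $L_{p'}(\mu)\to X^{**}$, pushing $\mu$ onto $\ball(X)$ via barycenters or Pettis densities) is unjustified, and a barycentric replacement in particular cannot work. Replacing pieces of $\mu$ by their barycenters can only decrease $\int|\langle x^{**},x^*\rangle|^p\,d\mu$ (Jensen), so the domination of the $f_\alpha$ would be lost.

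The missing idea is to leave $\mu$ alone and restrict $T$ to the predual. Write $X=Z^*$, so $Z\subset Z^{**}=X^*$. Then $T|_Z=j_\mu\circ S|_Z$, and $j_\mu:C(K)\to L_p(\mu)$ is weakly compact, hence completely continuous because $C(K)$ has the Dunford--Pettis property. Since $Z^*$ has the wRNP, $Z$ contains no copy of $\ell_1$. By Rosenthal's theorem every bounded sequence in $Z$ has a weakly Cauchy subsequence, which $T|_Z$ sends to a norm Cauchy sequence. Thus $T|_Z$ is compact; this is \cite[Theorem II.1]{Car}, which the paper invokes.

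Now put $T'=(T|_Z)^{**}:X^*\to L_p(\mu)^{**}$. It is the adjoint of the compact operator $(T|_Z)^*:L_p(\mu)^*\to Z^*=X$, so values in $X$ come for free and no representability is needed. \Cref{compact_adjoint} then gives $\phi_{T'}\in\Hp[X]$ with $\|\phi_{T'}\|_p=\pi_p(T|_Z)\le 1$. On $Z$ we have $\phi_{T'}=\phi_T\ge f_\alpha$. Both $\phi_{T'}$ and $f_\alpha$ are weak$^*$ continuous on bounded sets, and $\ball(Z)$ is weak$^*$ dense in $\ball(X^*)$ by Goldstine, so $\phi_{T'}\ge f_\alpha$ everywhere. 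Note that the density you need is that of $\ball(Z)$ in $\ball(X^*)$, not of $\ball(X)$ in $K$ as in your last paragraph.
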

In fact, in \Cref{duals and Hp}, we shall show that, for a dual Banach space $X$, $\Hp[X]$ is monotonically bounded if and only if X has wRNP (in which case, it is 1-monotonically bounded).

We shall make use of ultraproducts (see \cite{Hein} for an introduction into the topic).

\begin{lemma}\label{ultraproduct}
Suppose $\mathfrak U$ is an ultrafilter on a set $I$, $X_i, Y_i$ $(i \in I)$ are Banach spaces, and $T_i : X_i \to Y_i$ are linear operators, so that $\sup_{i \in I} \pi_p(T_i) < \infty$. Then $\pi_p(\prod_{\mathfrak U} T_i) \leq \lim_{\mathfrak U} \pi_p(T_i)$.
\end{lemma}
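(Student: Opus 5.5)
The plan is to verify the defining inequality for $p$-summing operators directly on finite families in the ultraproduct $\prod_{\mathfrak U} X_i$, using the ultraproduct norm formula together with the fact that the weak $p$-norm of a finite family is computed through finitely many scalar parameters. Set $C = \lim_{\mathfrak U} \pi_p(T_i)$, which is finite since $\sup_i \pi_p(T_i) < \infty$; this also makes $T = \prod_{\mathfrak U} T_i$ well defined and bounded, with $\|T\| \leq \sup_i \|T_i\| \leq \sup_i \pi_p(T_i)$. Fix $x_1, \ldots, x_n$ in $\prod_{\mathfrak U} X_i$ with representatives $x_k = (x_k^{(i)})_i$, $\sup_i \|x_k^{(i)}\| < \infty$. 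We want
\[
\Big(\sum_{k=1}^n \|T x_k\|^p\Big)^{1/p} \leq C \, \|(x_k)_{k=1}^n\|_{p, \textrm{weak}} .
\]

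Left side first. By definition of the ultraproduct norm, $\|T x_k\| = \lim_{\mathfrak U} \|T_i x_k^{(i)}\|$ for each $k$. As there are finitely many $k$, $\sum_k \|Tx_k\|^p = \lim_{\mathfrak U} \sum_k \|T_i x_k^{(i)}\|^p$. For each $i$, the definition of $\pi_p$ gives $\sum_k \|T_i x_k^{(i)}\|^p \leq \pi_p(T_i)^p \|(x_k^{(i)})_k\|_{p,\textrm{weak}}^p$. Passing to the $\mathfrak U$-limit (products of bounded scalar families behave well under ultralimits), the left side is at most $C \lim_{\mathfrak U} \|(x_k^{(i)})_k\|_{p,\textrm{weak}}$.

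The main step is showing $\lim_{\mathfrak U} \|(x_k^{(i)})_k\|_{p,\textrm{weak}} \leq \|(x_k)_k\|_{p,\textrm{weak}}$ (in fact equality). Use the description from the introduction: $\|(z_k)\|_{p,\textrm{weak}} = \sup\{\|\sum_k \alpha_k z_k\| : \alpha \in \ball(\ell_{p'}^n)\}$. For fixed $\alpha$, $\|\sum_k \alpha_k x_k\| = \lim_{\mathfrak U}\|\sum_k \alpha_k x_k^{(i)}\|$. The difficulty is interchanging the supremum over $\alpha$ with the ultralimit. This is where finite dimensionality of the parameter set is used. Let $M = \sup_{i,k}\|x_k^{(i)}\|$. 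The maps $\alpha \mapsto \|\sum_k \alpha_k x_k^{(i)}\|$ are then uniformly Lipschitz on $\ball(\ell_{p'}^n)$, with constant $\leq n M$ with respect to the sup norm on coefficients.

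Given $\vr>0$, take a finite $\vr$-net $\alpha^{(1)}, \ldots, \alpha^{(m)}$ of the compact set $\ball(\ell_{p'}^n)$. Then for each $i$ we have
\[
\|(x_k^{(i)})_k\|_{p,\textrm{weak}} \leq \max_j \Big\|\sum_k \alpha^{(j)}_k x_k^{(i)}\Big\| + nM\vr .
\]
A maximum of finitely many terms commutes with $\lim_{\mathfrak U}$. Hence
\[
\lim_{\mathfrak U}\|(x_k^{(i)})_k\|_{p,\textrm{weak}} \leq \max_j \Big\|\sum_k \alpha^{(j)}_k x_k\Big\| + nM\vr \leq \|(x_k)_k\|_{p,\textrm{weak}} + nM\vr .
\]
Letting $\vr \to 0$ gives the claim. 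Combining this with the previous step yields the inequality for every finite family, so $\pi_p(T) \leq C$. The only delicate point is this sup/limit interchange, and the compactness argument handles it.
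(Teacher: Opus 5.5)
Your proof is correct and follows essentially the same route as the paper: both handle the interchange of the supremum over the $\ell_{p'}$-ball of coefficients with the ultralimit by passing to a finite net, and both use the uniform bound on the representatives to control the approximation error. The paper does the bookkeeping with sets in $\mathfrak U$ and the quotient (null-sequence) description of the ultraproduct norm, whereas you use the formula $\|x\| = \lim_{\mathfrak U}\|x^{(i)}\|$ directly; the two are equivalent.
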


\begin{remark}
 The inequality in \Cref{ultraproduct} may be strict. For instance, take $p=2$, $I = \N$, $X_i = Y_i = \ell_2$ for any $i$, and suppose $\mathfrak U$ is a free ultrafilter on $\N$. Let $T_i = i^{-1/2} P_i$, where $P_i$ is an orthogonal projection of rank $i$. We know that $\pi_2(T_i) = 1$ for any $i$, yet $\prod_{\mathfrak U} T_i = 0$.
\end{remark}

\begin{proof}
By scaling, we can assume that $\lim_{\mathfrak U} \pi_p(T_i) = 1$.
We have to show that, for any $\vr > 0$ and $(\overline{x}_j)_{j=1}^N \in \prod_{\mathfrak U} X_i$ with $\|(\overline{x}_j)\|_{p,\textrm{weak}} < 1$, we have $\sum_j \| (\prod_{\mathfrak{U}} T_i) \overline{x}_j\|^p \leq (1+\vr)^{2p}$.
Note that, if $\overline{x}_j$ is represented by $(x_{ji})_{i \in I}$ ($x_{ji} \in X_i$), then $(\prod_{\mathfrak{U}} T_i) \overline{x}_j$ is the equivalence class of $(T_i x_{ji})_{i \in I}$, hence it suffices to show the existence of $U \in {\mathfrak{U}}$ so that $\sum_j \|T_i x_{ji}\|^p \leq (1+\vr)^{2p}$ for any $i \in U$.

To this end, for each $j$ find $(x_{ji})_{i \in I} \in (\sum_{i \in I} X_i)_\infty$ so that $\overline{x}_j$ is the equivalence class of $(x_{ji})_{i \in I}$ (modulo the class $N_{\mathfrak{U}}$ of all $(y_i)$ with $\lim_{\mathfrak{U}} \|y_i\| = 0$). Let $C = \max_j \sup_i \|x_{ji}\|$.
Denote by $S$ the set of all $\widetilde{\alpha} = (\alpha_j)_{j=1}^N$ so that $\sum_j |\alpha_j|^q \leq 1$ ($1/p + 1/q = 1$). Let $S_0$ be a finite subset of $S$ with the property that for any $\widetilde{\alpha} = (\alpha_j)_{j=1}^N \in S$ there exists $\widetilde{\beta} = (\beta_j)_{j=1}^N \in S_0$ so that $\max_j |\alpha_j - \beta_j| < \vr/(4N\max\{C,1\})$.

For $\widetilde{\beta} = (\beta_j) \in S_0$ find $y_{\widetilde{\beta}} = (y_{\widetilde{\beta}i}) \in N_{\mathfrak{U}}$ so that $\|\sum_j \beta_j x_{ji} - y_{\widetilde{\beta}i}\| < 1 + \vr/4$ for any $i$. The set $I_{\widetilde{\beta}} = \{i \in I : \|y_{\widetilde{\beta}i}\| < \vr/4\}$ belongs to ${\mathfrak{U}}$, hence so does $I_0 = \cap_{\widetilde{\beta} \in S_0} I_{\widetilde{\beta}}$. 
By the triangle inequality, for any $i \in I_0$ and $\widetilde{\beta} \in S_0$, we have $\big\| \sum_j \beta_j x_{ji} \big\| \leq 1 + \vr/2$. A further application of the triangle inequality gives us $\big\| \sum_j \alpha_j x_{ji} \big\| \leq 1 + \vr$ for any $\widetilde{\alpha} = (\alpha_j) \in S$ and $i \in I_0$.
Further, there exists $I_1 \in {\mathfrak{U}}$ so that $\pi_p(T_i) < 1+\vr$ for $i \in I_1$. From this we conclude that, for any $i \in U = I_0 \cap I_1$, 
$$
\big( \|\sum_j \|T_i x_j\|^p \big)^{1/p} \leq \pi_p(T_i) \sup_{\widetilde{\alpha} \in S} \big\| \sum_j \alpha_j x_{ji} \big\| \leq (1+\vr)^2 ,
$$
and our task is complete.
\end{proof}

\begin{proof}[Proof of \Cref{wRNP_Hp}]
Given an increasing net $(f_i)_{i \in I} \subset \ball(\Hp[X])_+$, we shall find $f \in \ball(\Hp[X])_+$ so that $f \geq f_i$ for every $i$.
By \Cref{FBLp norm}, for each $i$, there exist a probability measure $\mu_i$ and an operator $T_i : X^* \to L_p(\mu_i)$ with $\iota_p(T_i) \leq 1$, and $\|T_i x^*\| \geq f_i(x^*)$ for any $x^* \in X^*$.

Consider the \emph{order filter} on $I$, which consists of sets $S \subset I$ for which there exists $i \in S$ so that $j \in S$ whenever $j \geq i$. An ultrafilter containing such a filter shall be called \emph{order consistent}.
Suppose ${\mathcal{U}}$ is an order consistent ultrafilter on $I$, then, by \Cref{ultraproduct}, $\prod_{\mathcal{U}} T_i : (X^*)^{\mathcal{U}} \to (L_p(\mu_i))^{\mathcal{U}}$ has $p$-summing norm not exceeding $\lim_{\mathcal U} \pi_p(T_i) \leq 1$. 
Let $S_0$ be the restriction of the aforementioned operator to $X^*$ (canonically embedded into its ultrapower), then $\pi_p(S_0) \leq 1$, and, for any $x^* \in X^*$, $\|S_0 x^*\| = \lim_{{\mathcal{U}}} \|T_i x^*\| \geq \vee_i f_i(x^*)$. 
By Pietsch Factorization Theorem, there exists an operator $S : X^* \to L_p(\mu)$ so that $\iota_p(S) \leq 1$, and $\|Sx^*\| \geq \vee_i f_i(x^*)$ for any $x^* \in X^*$.

Now suppose $X_*$ is a predual of $X$, and let $T = S|_{X_*}$. By \cite[Theorem II.1]{Car}, $T$ is compact, therefore, since $\pi_p(T^{**}) = \pi_p(T) \leq 1$, \Cref{compact_adjoint} yields that $\phi_{T^{**}} \in \Hp[X]$ and $\|\phi_{T^{**}}\|_p \leq 1$.

It remains to show that $\phi_{T^{**}} \geq f_i$ for every $i$. Indeed, any $x^* \in \ball(X^*)$ is the weak$^*$ limit of a net $(z_\alpha) \subset \ball(X_*)$ . As $T^{**}$ is weak$^*$ continuous, $\phi_{T^{**}}(x^*) = \lim_\alpha \phi_{T^{**}}(z_\alpha)$. Also, for any $i$, $f_i(x^*) = \lim_\alpha f_i(z_\alpha)$. Finally, for each $\alpha$, $\phi_{T^{**}}(z_\alpha) = \|S z_\alpha\| \geq f_i(z_\alpha)$, which leads to the desired conclusion.
\end{proof}

Before continuing our discussion of monotonic boundedness, we make a brief detour to investigate a question raised in \cite{LaTr}. Specifically, we prove the following.

\begin{theorem}\label{la-tr}
 If $X$ is a dual Banach space with the RNP, and $1 \leq p < \infty$, then for any $f \in \ball(\Hp[X])_+$ there exists $g \in \ball(\FBLp[X])_+$ such that $f \leq g$. As a consequence, $\Hp[X] = \nip[X]$.
\end{theorem}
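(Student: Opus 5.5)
Given $f \in \ball(\Hp[X])_+$, I will produce an operator $T$ with $f \leq \phi_{T^{**}}$, where $\phi_{T^{**}}$ can be approximated in $p$-summing norm by weak$^*$ continuous finite rank operators. \Cref{nuclear in FBL} then places $\phi_{T^{**}}$ in $\FBLp[X]$.

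First, \Cref{FBLp norm} gives a probability measure $\mu$ and $S : X^* \to L_p(\mu)$ with $\iota_p(S) \leq 1$ and $f \leq \phi_S$. Let $X_*$ be a predual of $X$ and set $T = S|_{X_*} : X_* \to L_p(\mu)$. Then $T$ is $p$-integral with $\iota_p(T) \leq 1$. Since $X = (X_*)^*$ has the RNP, the standard theory (see \cite{DJT} or \cite{DU}) says that a $p$-integral operator whose domain has a dual with the RNP is $p$-nuclear, with $\nu_p(T) = \iota_p(T)$. I intend to use this form: for $X_*$ with $X_*^*$ RNP, $I_p(X_*, Y) = N_p(X_*, Y)$ isometrically.

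From $p$-nuclearity, $T = \sum_n y_n^* \otimes z_n$ with $y_n^* \in X$. Each partial sum $T_N$ is finite rank, and its bi-adjoint restricted to $X^*$, namely $T_N^{**} : X^* \to L_p(\mu)$, is given by $x^* \mapsto \sum_{n \leq N} \langle x^*, y_n^*\rangle z_n$. This map is weak$^*$ continuous with finite rank. Moreover $\pi_p(T^{**} - T_N^{**}) \leq \nu_p(T - T_N) \to 0$. Hence \Cref{nuclear in FBL} gives $g := \phi_{T^{**}} \in \FBLp[X]$ with $\|g\| = \pi_p(T^{**}) \leq \nu_p(T) \leq 1$.

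To get $g \geq f$, I follow the end of the proof of \Cref{wRNP_Hp}. For $z \in X_*$ we have $g(z) = \|Tz\| = \|Sz\| \geq f(z)$. By Goldstine's theorem every $x^* \in \ball(X^*)$ is a weak$^*$ limit of $z_\alpha \in \ball(X_*)$. Both $g$ and $f$ are weak$^*$ continuous on bounded sets, so $g(x^*) \geq f(x^*)$, and positive homogeneity extends this to all of $X^*$. For the consequence: every $h \in \Hp[X]$ satisfies $|h| \leq \|h\|_p\, g$ for some $g \in \FBLp[X]_+$, so $h$ lies in the ideal generated by $\FBLp[X]$, giving $\Hp[X] = \nip[X]$.

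The main obstacle is the claim $\iota_p = \nu_p$ on operators from $X_*$ when $X_*^*$ has the RNP. For $p=1$ this is classical. For general $p$ I would first try to factor $T$ as $X_* \xrightarrow{a} L_\infty(\nu) \xrightarrow{j} L_p(\nu) \xrightarrow{b} L_p(\mu)$, with $j$ the formal inclusion. I would then use the RNP to represent $ja$ through a Bochner-integrable density in $L_p(\nu; X)$ and approximate that density by simple functions, which yields $p$-nuclear approximants. If that representation proves delicate, a fallback is to show directly that $T$ is approximable in the $\pi_p$-norm by finite rank operators whose adjoints take values in $X$; this is all that \Cref{nuclear in FBL} needs.
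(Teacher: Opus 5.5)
Your proof is correct and follows the paper's route: restrict the $p$-integral operator to the predual $X_*$, use the RNP of $X$ to upgrade $p$-integral to $p$-nuclear, then apply \Cref{nuclear in FBL} and Goldstine. The paper cites Carne's Theorem II.4 for the upgrade, after re-running Pietsch factorization on $S|_{X_*}$, a step your ideal-property restriction makes unnecessary. The one point to state explicitly is that the RNP result is about \emph{strictly} $p$-integral operators, and your $T$ is one because $L_p(\mu)$ is contractively complemented in its bidual; this is also what justifies writing your factorization as $b : L_p(\nu) \to L_p(\mu)$ rather than into $L_p(\mu)^{**}$ when $p=1$.
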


In \Cref{Ip into Hp} we shall see examples of spaces $X$ with $\nip[X] \subsetneq \Ip[X] \subsetneq \Hp[X]$.

\begin{proof}
 Consider $f \in \ball(\Hp[X])_+$. Find a probability measure $\mu$ and an operator $S : X^* \to L_p(\mu)$ with $\iota_p(S) \leq 1$, and $\|S x^*\| \geq f(x^*)$ for any $x^* \in X^*$.
 
 Let $S_0 = S|_{X_*} : X_* \to L_p(\mu)$ (as before, $X_*$ is a predual of $X$), then $\pi_p(S_0) \leq 1$. By Pietsch Factorization Theorem, there exists an operator $T : X_* \to L_p(\mu)$ so that $\iota_p(T) \leq 1$, and $\|Tx_*\| \geq f(x_*)$ for any $x_* \in X_*$. In fact, in the terminology of \cite{Car}, $T$ is strictly $p$-integral, since $L_p(\mu)$ is contractively complemented in its second dual. 
 Therefore, by \cite[Theorem II.4]{Car}, $T$ is $p$-nuclear with $\nu_p(T) \leq 1$. Consequently, there exists a sequence of finite-rank operators $T_n: X_* \to L_p(\mu)$ such that $\nu_p(T - T_n) \to 0$. Note that $\pi_p(T^{**} - T_n^{**}) = \pi_p(T - T_n) \leq \nu_p(T - T_n)$, so by \Cref{nuclear in FBL}, $\phi_{T^{**}} \in \FBLp[X]$, and $\|\phi_{T^{**}}\|_p \leq 1$.
 
  Further, $f(x_*) \leq \|T x_*\| = \|T^{**} x_*\| = \phi_{T^{**}}(x_*)$ for $x_* \in X_*$. As both $f$ and $\phi_{T^{**}}$ are weak$^*$ continuous on bounded sets, Goldstine Theorem implies $f \leq \phi_{T^{**}}$, as desired.
\end{proof}

As the RNP implies  the wRNP, the following is an immediate consequence of \Cref{wRNP_Hp,la-tr}.
\begin{corollary}\label{dual-with-RNP}
 If $X$ is a dual Banach space with the RNP, then both $\FBLp[X]$ and $\Ip[X]$ are 1-monotonically bounded.
\end{corollary}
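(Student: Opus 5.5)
Let $L$ stand for either $\FBLp[X]$ or $\Ip[X]$. The plan is to combine the two theorems directly. Take an increasing net $(f_i)_{i\in I}\subset \ball(L)_+$; it suffices to produce an upper bound $g\in \ball(L)_+$. We have $\FBLp[X]\subset\Ip[X]\subset\Hp[X]$, and the inclusions are contractive, since all three carry the norm $\|\cdot\|_p$. Hence $(f_i)$ is an increasing net in $\ball(\Hp[X])_+$. Since the RNP implies the wRNP, \Cref{wRNP_Hp} applies to $X$ and gives $f\in\ball(\Hp[X])_+$ with $f\geq f_i$ for every $i$.

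The upper bound $f$ need not lie in $L$, so we next push it into $\FBLp[X]$. By \Cref{la-tr}, since $X$ is a dual space with the RNP, there is $g\in\ball(\FBLp[X])_+$ with $f\leq g$. Then $g\geq f_i$ for all $i$ and $\|g\|_p\leq 1$.

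It remains to check that the order is compatible. In all the lattices involved the order is pointwise, as they are function lattices on $X^*$ with pointwise operations. So $g\geq f_i$ pointwise means $g$ is an upper bound of the net in $\FBLp[X]$, and also in $\Ip[X]$, since $g\in\FBLp[X]\subset\Ip[X]$. Scaling gives the bound $\|g\|\leq\sup_i\|f_i\|$ for a general norm-bounded net: if $\sup_i\|f_i\|=s>0$, apply the argument to $(f_i/s)$ and take $sg$; if $s=0$, every $f_i=0$ and $0$ is an upper bound. This is exactly 1-monotonic boundedness of both lattices.

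No step is a real obstacle, because the substance lies in \Cref{wRNP_Hp} and \Cref{la-tr}. The only points to verify are that the order in $\FBLp[X]$ and $\Ip[X]$ is the pointwise order inherited from $\Hp[X]$, so that $\FBLp[X]$ and $\Ip[X]$ are sublattices of $\Hp[X]$, and that the norms coincide.
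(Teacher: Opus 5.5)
Your proposal is correct and is exactly the paper's argument: the corollary is stated there as an immediate consequence of \Cref{wRNP_Hp} (which applies since the RNP implies the wRNP) followed by \Cref{la-tr}, which pushes the upper bound from $\ball(\Hp[X])_+$ into $\ball(\FBLp[X])_+$. Your checks on the pointwise order, the common norm $\|\cdot\|_p$, and the scaling step fill in the routine details the paper leaves implicit.
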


In particular, we have the following:

\begin{corollary}\label{good cases}
If $X$ is a separable dual space or $X$ is reflexive, then both $\FBLp[X]$ and $\Ip[X]$ are 1-monotonically bounded.
\end{corollary}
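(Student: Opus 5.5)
This follows from \Cref{dual-with-RNP} once we check that the spaces in question are dual spaces with the RNP. We treat the two cases of the statement in turn.

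\emph{Reflexive case.} If $X$ is reflexive, then $X = (X^*)^*$ is a dual space. Reflexive spaces have the RNP; the standard route is that every separable subspace of $X$ is reflexive, hence a separable dual, and the RNP is separably determined. Alternatively, one can use directly that bounded martingales in a reflexive space converge, by weak compactness of the ball together with the Dunford--Pettis-type characterization.

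\emph{Separable dual case.} If $X = Z^*$ is separable, then the Dunford--Pettis theorem (in the Bessaga--Pe\l czy\'nski / Stegall form, see \cite{DU} or \cite{Bour}) says that separable dual spaces have the RNP. So $X$ is a dual space with the RNP.

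In both cases the hypotheses of \Cref{dual-with-RNP} hold. Hence $\FBLp[X]$ and $\Ip[X]$ are 1-monotonically bounded for $1 \leq p < \infty$, which is the standing assumption of this section.

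No step here is genuinely difficult, since both facts are classical. The only points needing care are these. First, we must make sure the ``dual space'' hypothesis is used with a specified predual. In the proofs of \Cref{wRNP_Hp} and \Cref{la-tr} the predual $X_*$ enters through \cite{Car}, and for $X$ reflexive we take $X_* = X^*$. Second, the case $p = \infty$ should be excluded, consistent with the convention of this section.
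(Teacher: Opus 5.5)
Your proof is correct and is essentially the paper's. The paper also observes that reflexive spaces and separable dual spaces are dual spaces with the RNP (citing \cite{DU} or \cite[Chapter 5]{BL}), invokes \Cref{dual-with-RNP}, and likewise works with $X_* = X^*$ in the reflexive case and the section's standing assumption $1 \leq p < \infty$.
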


\begin{proof}
In both cases, $X$ has the RNP (see, for instance, \cite{DU} or \cite[Chapter 5]{BL}).
\end{proof}

\begin{remark}\label{lattice case}
By \cite[Corollary 5.4.21]{M-N}, a separable Banach lattice $X$ has the RNP iff it is a dual of a Banach lattice. Thus, if $X$ is a separable Banach lattice with the RNP (equivalently, with the wRNP, see \cite{GS}), then the lattices $\FBLp[X], \Ip[X], \Hp[X]$ are 1-monotonically bounded.
\end{remark}

\section{Functions on $X^*$ generated by operators}\label{operators:general}

In the preceding section, we have already used operators to investigate spaces of functions on $X^*$. We will pursue this further in this section, and below.
The main set-up will be as follows. Suppose $Z, X$ are Banach spaces, $T \in B(Z,X)$, and $Z_1 \subset Z_2 \subset \ldots$ are finite dimensional subspaces of $Z$. We say that $(Z_k)$ is a \emph{sequential finite dimensional resolution} (\emph{SFDR}) for $T$ if $\cup_k Z_k$ is $1$-norming for $T^*(X^*)$ -- that is, the equality
$$
\|T^* x^*\| = \sup_k \sup \big\{ \langle T^* x^*, z \rangle : z \in \ball(Z_k) \big\} 
$$
holds for any $x^* \in X^*$. Further, let $T_k = T \big|_{Z_k}$. A simple observation is in order:

\begin{lemma}\label{l:simple properties}
In the above notation:
\begin{enumerate}
    \item For every $x^* \in X^*$, $\|T_k^* x^*\| \nearrow \|T^* x^*\|$.
    \item $\pi_p(T_k^*) \nearrow \pi_p(T^*)$.
\end{enumerate}
\end{lemma}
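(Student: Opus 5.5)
For (1), note that $T_k^* x^* = (T^*x^*)|_{Z_k}$, since $T_k = T\circ \iota_k$ with $\iota_k : Z_k \to Z$ the inclusion, and $T_k^* = \iota_k^* T^*$ is restriction. Therefore
$$
\|T_k^* x^*\| = \sup \big\{ \langle T^* x^*, z \rangle : z \in \ball(Z_k) \big\} .
$$
This quantity is non-decreasing in $k$ because $\ball(Z_k) \subset \ball(Z_{k+1})$. It is bounded above by $\|T^*x^*\|$. By the SFDR hypothesis, its supremum over $k$ equals $\|T^* x^*\|$. Hence $\|T_k^* x^*\| \nearrow \|T^* x^*\|$.

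For (2), monotonicity and the upper bound follow from the ideal property. We have $T_k^* = \rho_k T_{k+1}^*$, where $\rho_k : Z_{k+1}^* \to Z_k^*$ is the contractive restriction map. Likewise $T_k^* = \iota_k^* T^*$. So $\pi_p(T_k^*) \le \pi_p(T_{k+1}^*) \le \pi_p(T^*)$, and the limit $L = \lim_k \pi_p(T_k^*)$ exists and satisfies $L \le \pi_p(T^*)$. (If $\pi_p(T^*) = \infty$, the same argument shows the $\pi_p(T_k^*)$ increase without bound, provided the reverse inequality below is established.)

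For the reverse inequality, fix $x_1^*, \ldots, x_n^* \in X^*$. By part (1), for each $i$ we have $\|T_k^* x_i^*\| \to \|T^* x_i^*\|$. Since there are finitely many terms,
$$
\Big(\sum_i \|T^* x_i^*\|^p\Big)^{1/p} = \lim_k \Big(\sum_i \|T_k^* x_i^*\|^p\Big)^{1/p} \le L \, \|(x_i^*)\|_{p,\textrm{weak}} .
$$
Taking the supremum over all finite families gives $\pi_p(T^*) \le L$. This also covers the infinite case.

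The only real subtlety is interpreting the SFDR norming condition correctly as the identity in (1), and noting that $p$-summing norms are determined by finite families, so that pointwise convergence suffices. No compactness or uniformity is needed.
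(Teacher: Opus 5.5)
Your argument is correct: (1) is the identification of $T_k^*x^*$ with the restriction of $T^*x^*$ to $Z_k$ combined with the SFDR norming condition. For (2), the ideal property gives monotonicity and the upper bound, and since $\pi_p$ is computed on finite families, the pointwise convergence from (1) yields $\pi_p(T^*) \le \lim_k \pi_p(T_k^*)$, including when $\pi_p(T^*) = \infty$. The paper states this lemma as a simple observation without a written proof, and your argument is the routine verification it leaves implicit.
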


In item (2), $\pi_p(T^*)$ need not be finite: throughout this paper, we set $\pi_p(S) = \infty$ if $S$ is not $p$-summing.

In a similar fashion, we say that $(Z_k)$ is a \emph{net finite dimensional resolution} (\emph{NFDR}) for $T$ if $(Z_k)_{k \in {\mathcal K}}$ is an increasing net of finite dimensional subspaces of $X$, and $\cup_k Z_k$ is $1$-norming for $T^*(X^*)$.
Results similar to the above holds for NFDR as well. Note that any operator has an NFDR. However, most operators we shall consider (in particular, those with separable domain) will actually possess SFDR.

\begin{theorem}\label{no compact no fatou}
Suppose there exists a Banach space $Z$ and a non-compact $T \in B(Z,X)$ such that $T^* \in \Pi_p(X^*,Z^*)$.
Then the lattices $\FBLp[X], \Ip[X], \Hp[X]$ fail the $\lambda$-Fatou property, for any $\lambda > 0$.
If, moreover, $T$ has an SFDR, then the above lattices fail the countable $\lambda$-Fatou property.
\end{theorem}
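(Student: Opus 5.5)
Fix an NFDR $(Z_k)_{k\in\mathcal K}$ for $T$ (an SFDR in the sequential case) and set $T_k=T|_{Z_k}$. Each $T_k$ has finite rank, so $T_k^*:X^*\to Z_k^*$ is weak$^*$ continuous with finite-dimensional range. By \Cref{fin rank ops}, $\phi_{T_k^*}\in\FBLp[X]$ with $\|\phi_{T_k^*}\|=\pi_p(T_k^*)\le\pi_p(T^*)$. By \Cref{l:simple properties}, the family $(\phi_{T_k^*})$ is increasing, lies in $\FBLp[X]_+$, is norm bounded by $\pi_p(T^*)$ in all three lattices, and converges pointwise to $\phi_{T^*}$. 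The plan is to show that this net has supremum $0$-to-nothing useful, more precisely, that any upper bound in any of the three lattices is huge, or that there are upper bounds but the supremum has large norm. The cleanest route is to rescale and perturb so the supremum exists but its norm blows up.

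The key observation is that $\phi_{T^*}$ is \emph{not} weak$^*$ continuous at $0$ on $\ball(X^*)$, by \Cref{weak* continuity}, since $T$ is non-compact. So any upper bound $g\in\Hp[X]$ of the net satisfies $g\ge\phi_{T^*}$ pointwise. However, $g$ is weak$^*$ continuous on $\ball(X^*)$ with $g(0)=0$, whereas $\phi_{T^*}$ is not small near $0$. Concretely, there are $c>0$ and a net $x^*_\alpha\to0$ weak$^*$ in $\ball(X^*)$ with $\|T^*x^*_\alpha\|\ge c$. Then $g(x^*_\alpha)\ge c$, but $g(x^*_\alpha)\to g(0)=0$, which is a contradiction. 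Hence the net has \emph{no} upper bound at all in $\Hp[X]$, and so none in the smaller lattices either. This shows failure of monotonic boundedness, but the Fatou property concerns nets that do have a supremum, so the construction has to be modified.

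To get failure of $\lambda$-Fatou, I would build directed sets that have a supremum of large norm. Fix $\lambda$. Using non-compactness, pick a bounded sequence $x_n^*$ (or a net) with $x_n^*\to0$ weak$^*$ and $\|T^*x_n^*\|\ge c$. The candidate is the following. Take $h\in\FBLp[X]_+$ of norm $M\gg\lambda\pi_p(T^*)$ that is large near such points, for instance $h=|\delta_{x}|$ scaled, or a finite-rank $\phi_u$. Consider the set $\mathcal S=\{\phi_{T_k^*}\wedge h\}$ and, more generally, elements below $h$ formed from truncations. The aim is to show that in each lattice $L$ the only functions $g\le h$ dominating all $\phi_{T_k^*}\wedge h$ are forced to equal $h$. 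That would give $\sup\mathcal S=h$ in $L$ with $\|h\|\ge M$, while $\sup_{\mathcal S}\|s\|\le\pi_p(T^*)$. The forcing argument again combines weak$^*$ continuity of $g$ with the discontinuity of $\phi_{T^*}$. Since $\phi_{T^*}$ equals $\sup_k\phi_{T_k^*}$ pointwise, $g\ge\phi_{T^*}\wedge h$, and then weak$^*$ continuity upgrades this lower bound at points approached by translates of the bad net. Here I would use positive homogeneity and take $x^*+t x_\alpha^*$ to spread the bad behaviour around. The goal is to prove $g\ge h$ on a weak$^*$-dense set, hence everywhere.

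This forcing step is the main obstacle: choosing $h$ and the directed set so that every upper bound below $h$ is pinned to $h$. My first attempt would be to use $\mathcal S=\{(n\phi_{T_k^*})\wedge h\}_{k,n}$, exploiting homogeneity to inflate the discontinuity. I would still need to check that the norms $\|(n\phi_{T_k^*})\wedge h\|$ stay controlled, possibly by replacing $h$ with a sum over sparse directions. In the SFDR case, indexing by $k$ (and $n$) gives an increasing sequence, which yields the countable statement.
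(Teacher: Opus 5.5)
Your framework is the paper's own: take $h=\mu|\delta_{x_0}|$ with $\|x_0\|=1$ and $\mu>\lambda\pi_p(T^*)$, put $f_k=\phi_{T_k^*}\wedge h$, and show that every upper bound $g\in\Hp[X]$ of $(f_k)$ satisfies $g\ge h$. Then $h=\sup_k f_k$ in each of the three lattices, while $\|f_k\|\le\pi_p(T^*)$ and $\|h\|=\mu$. Your second paragraph, showing that $(\phi_{T_k^*})$ has no upper bound at all, is correct, but as you note it only gives failure of monotonic boundedness.

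The gap is that you stop at the step that carries the proof. You call the forcing step the ``main obstacle'' and leave it unproved. The remedy you propose, $\mathcal S=\{(n\phi_{T_k^*})\wedge h\}_{k,n}$, makes things worse. As soon as $T_k\neq0$, we get $\sup_n\|(n\phi_{T_k^*})\wedge h\|=\|h\|$. Already the sup norm on $\ball(X^*)$ forces this, since functionals outside the proper subspace $\ker T_k^*$ are norm dense. So such a family cannot witness failure of $\lambda$-Fatou. Also, $h$ need not be ``large near'' the bad points: the $x_\alpha^*$ accumulate weak$^*$ at $0$, where $h$ is small. The only property of $h$ that is used is weak$^*$ continuity on bounded sets.

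The missing idea is to do the inflation in the argument rather than in the function; the original family $(f_k)$ already works. Let $(x_\alpha^*)\subset\ball(X^*)$ be weak$^*$ null with $\|T^*x_\alpha^*\|>c$, so that $\|T_k^*x_\alpha^*\|>c$ for $k\ge K(\alpha)$. Fix $x^*$ with $\|x^*\|=1$, fix $\sigma>0$, and choose $C>(\mu+\|T\|)/c$. For $k\ge K(\alpha)$,
\[
\phi_{T_k^*}(x^*+Cx_\alpha^*)\ge C\|T_k^*x_\alpha^*\|-\|T_k^*x^*\|>Cc-\|T\|>\mu\ge h(x^*).
\]
Meanwhile $h(x^*+Cx_\alpha^*)=\mu|\langle x^*,x_0\rangle+C\langle x_\alpha^*,x_0\rangle|>h(x^*)-\sigma$ for $\alpha\ge\alpha_0$. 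Hence $g(x^*+Cx_\alpha^*)\ge f_k(x^*+Cx_\alpha^*)\ge h(x^*)-\sigma$ whenever $\alpha\ge\alpha_0$ and $k\ge K(\alpha)$. Since $x^*+Cx_\alpha^*\to x^*$ weak$^*$ inside $(1+C)\ball(X^*)$, where $g$ is weak$^*$ continuous, we get $g(x^*)\ge h(x^*)-\sigma$. Letting $\sigma\to0$ and using positive homogeneity gives $g\ge h$. With an SFDR, $(f_k)$ is an increasing sequence (the net $(x_\alpha^*)$ may remain a net), which yields the countable statement.
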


\begin{proof}
We consider the general case, as the one with the SFDR is similar.
Let $E$ be any of the three lattices in question. By scaling, we can assume $\pi_p(T^*) = 1$, and hence $\|T\| \leq 1$. Let $(Z_k)$ be an NFDR for $T$.
As before, write $T_k = T \big|_{Z_k}$, $\phi(x^*) = \|T^* x^*\|$, $\phi_k(x^*) = \|T_k^* x^*\|$. Note that, by \Cref{fin rank ops}, $\phi_k \in \FBLp[X]$ for all $k$.
Pick $\mu > \lambda$, and let $f_k = \phi_k \wedge \mu \big|\delta_{x_0}\big|$, where $x_0 \in X$ has unit norm.
We claim that $\mu \big|\delta_{x_0}\big| = \vee_k f_k$ in $E$. As $\|\mu \big|\delta_{x_0}\big|\| = \mu > \lambda$, and $\|f_k\| \leq \|\phi_k\| \leq \|\phi\| = 1$ for every $k$, we shall be done once the claim is established. Thus, we shall show that if $g \in E$ is such that $g \geq f_k$ for every $k$, then $g \geq \mu \big|\delta_{x_0}\big|$.
In other words, $g(x^*) \geq \mu |\langle x^*, x_0 \rangle|$ for every norm one $x^* \in X^*$.

By \Cref{weak* continuity}, $\phi|_{\ball(X^*)}$ is discontinuous at $0$, so there exists $c > 0$ and a weak$^*$ null net $(x_\alpha^*) \subset \ball(X^*)$ such that $\phi(x_\alpha^*) > c$ for each $\alpha$. Thus, for each $\alpha$, there exists $K(\alpha)$ such that $\phi_k(x_\alpha^*) > c$ for $k \geq K(\alpha)$.

Fix a norm one $x^* \in X^*$ and pick $C > (1 + \mu)/c$. Then
$$
\phi_k(x^* + C x_\alpha^*) = \|T_k^*(x^* + C x_\alpha^*)\| \geq C \|T_k^* x_\alpha^*\| - \|T_k^* x^*\| ,
$$
and hence $\phi_k(x^* + C x_\alpha^*) \geq Cc-1$ for $k \geq K(\alpha)$. Now fix $\sigma > 0$, and find $\alpha_0$ such that $\big| \langle x_\alpha^* , x_0 \rangle \big| < \sigma/C$ for $\alpha \geq \alpha_0$. For such $\alpha$,
$$
\mu \big| \delta_{x_0} \big| (x^* + C x_\alpha^*) = \mu \big| \langle x^* + C x_\alpha^* , x_0 \rangle \big| > \mu \big| \langle x^* , x_0 \rangle \big| - \sigma .
$$
By our choice of $C$, for $\alpha \geq \alpha_0$ and $k \geq k(\alpha)$,
$$
f_k (x^* + C x_\alpha^*) \geq (Cc - 1) \wedge \big( \mu \big| \langle x^* , x_0 \rangle \big| - \sigma \big) = \mu \big| \langle x^* , x_0 \rangle \big| - \sigma .
$$
Consequently, for $\alpha \geq \alpha_0$, $g (x^* + C x_\alpha^*) \geq \mu \big| \langle x^* , x_0 \rangle \big| - \sigma$. Passing to the limit along the net, we conclude that $g (x^*) \geq \mu \big| \langle x^* , x_0 \rangle \big| - \sigma$. It remains to recall that $\sigma$ is arbitrarily small.
\end{proof}

\begin{remark}
The same proof goes through with $\mu \big| \delta_{x_0} \big|$ replaced by an arbitrary $h \in \FBLp[X]_+$.
\end{remark}

To perform a finer analysis, suppose $Z, X$ are Banach spaces, and $X_0 \subset X$ is dense. We say that $T \in B(Z,X)$ is \emph{dual $p$-persistent relative to $X_0$} if there exists $c > 0$ so that, for any $x_1, \ldots, x_n \in X_0$, we have $\pi_p(T^*|_{\cap_i x_i^\perp}) > c$.
We shall call $T$ \emph{dual $p$-persistent} if it is dual $p$-persistent with respect to some dense set $X_0 \subset X$.

Note that $T$ is dual $p$-persistent relative to $X$ if $T^*$ is not $p$-summing: indeed, then the restriction of $T^*$ onto any subspace of finite codimension will fail to be $p$-summing.

\begin{theorem}\label{failure of fatou}
Suppose there exists a Banach space $Z$ and a dual $p$-persistent $T \in B(Z,X)$ such that $T^* \in \Pi_p(X^*,Z^*)$. Then $\FBLp[X]$ fails the $\lambda$-Fatou property, for any $\lambda$, and $\Ip[X]$ is not monotonically bounded.
\end{theorem}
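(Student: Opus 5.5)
The plan is to normalise so that $\pi_p(T^*) = 1$ and to work with the increasing family $\phi_k(x^*) = \|T_k^* x^*\|$, where $(Z_k)$ is an NFDR for $T$ and $T_k = T|_{Z_k}$. By \Cref{fin rank ops} each $\phi_k$ lies in $\FBLp[X]$, and by \Cref{l:simple properties} we have $\|\phi_k\|_p = \pi_p(T_k^*) \leq 1$. Fix $c > 0$ and a dense $X_0 \subset X$ witnessing dual $p$-persistence. The key fact I will use is the following: for any $x_1, \ldots, x_n \in X_0$, set $W = \cap_i x_i^\perp$. Then $\pi_p(T^*|_W) > c$, so there are finitely many $w_1^*, \ldots, w_m^* \in W$ with $\|(w_j^*)\|_{p,\textrm{weak}} \leq 1$ and $\sum_j \|T^* w_j^*\|^p > c^p$. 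Since $\|T_k^* w^*\| \nearrow \|T^* w^*\|$ and only finitely many vectors are involved, the same inequality holds with $T_k$ in place of $T$ for all $k$ beyond some index. In other words, $\pi_p(T_k^*|_W) > c$ eventually.

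\textbf{$\Ip[X]$ is not monotonically bounded.} Suppose $g \in \Ip[X]$ satisfies $g \geq \phi_k$ for all $k$, and fix $\vr < c/2$. Choose $h \in I_{w^*}[X]$ with $\|g - h\|_p < \vr$; then $|h| \leq M \bigvee_{i=1}^n |\delta_{x_i}|$ for some $x_i \in X$ and $M > 0$. I first replace each $x_i$ by some $x_i' \in X_0$ close to it. The inequality $|\delta_{x_i}| \leq |\delta_{x_i'}| + |\delta_{x_i - x_i'}|$ lets me split $h$ into a part dominated by $M \bigvee_i |\delta_{x_i'}|$ and a remainder of $p$-norm at most $M \sum_i \|x_i - x_i'\|$, which I absorb into $\vr$. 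This is a routine ideal-decomposition step. On $W = \cap_i x_i'^\perp$ the dominated part vanishes, so on $W$ we get $0 \leq \phi_k \leq g \leq |g - h'|$, where $h'$ is the remainder-corrected approximant. Because the $p$-summing norm in \eqref{def of p-norm} is monotone and only involves points of $W$ when we test on $W$, this gives $\pi_p(T_k^*|_W) \leq \|g - h'\|_p < 2\vr < c$ for every $k$. That contradicts persistence. Hence the norm-bounded increasing sequence $(\phi_k) \subset \Ip[X]_+$ has no upper bound in $\Ip[X]$.

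\textbf{$\FBLp[X]$ fails the $\lambda$-Fatou property.} As in \Cref{no compact no fatou}, pick $\mu > \lambda$, a unit vector $x_0 \in X_0$, and set $f_k = \phi_k \wedge \mu |\delta_{x_0}|$. The target is to show that $\mu |\delta_{x_0}|$ is the supremum of $(f_k)$ in $\FBLp[X]$. Given an upper bound $g \in \FBLp[X]$, I would put $u = (\mu|\delta_{x_0}| - g)^+ \in \FBLp[X]$ and aim to prove $u = 0$. Approximate $u$ within $\vr$ by some $h \in \FVL[X]$; after the same density perturbation, $h$ depends only on $\langle \cdot, x_i \rangle$ for $x_i \in X_0$, and we include $x_0$ among the $x_i$. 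Then $h$ and $|\delta_{x_0}|$ are constant along cosets $x^* + W$.

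Now fix $x^*$ and take test points $y_j = x^* + C w_j^*$, where $w_j^*$ come from persistence on $W$ and $C$ is large. At such points $u(y_j) \leq (\mu|\langle x^*, x_0\rangle| - f_k(y_j))^+$, and this is small whenever $\phi_k(y_j) \geq C\|T_k^* w_j^*\| - 1$ is large. On the other hand $h(y_j) = h(x^*) \approx u(x^*)$. Summing $|u - h|^p$ over the $y_j$ and comparing with $\vr^p \|(y_j)\|_{p,\textrm{weak}}^p$ should force $u(x^*)$ to be of order $\vr$ times a constant depending only on $c$ and $\mu$. Letting $\vr \to 0$ then gives $u = 0$, and since $\|\mu|\delta_{x_0}|\| = \mu > \lambda$ while $\|f_k\| \leq 1$, the $\lambda$-Fatou property fails.

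\textbf{Main obstacle.} I expect the last estimate to be the hard step. Persistence only controls $\sum_j \|T^* w_j^*\|^p$, not the individual norms. So the indices $j$ where $C\|T_k^* w_j^*\|$ is not large — and where $f_k(y_j)$ gives no useful lower bound on $g$ — must be handled by a counting or weighting argument. Moreover, $\|(x^* + C w_j^*)_j\|_{p,\textrm{weak}}$ grows with both $m$ and $C$, and this growth has to be beaten. My first attempt would be to rescale the $w_j^*$ individually (a level-set/pigeonhole decomposition by the size of $\|T^* w_j^*\|$), keeping only one dyadic block that carries a fixed fraction of the $p$-sum. If that fails, the fallback is to use the cleaner ideal argument from the $\Ip[X]$ case applied to $u$. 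That argument would show directly that $\pi_p(T_k^*|_W)$ is bounded by roughly $\|u - h\|_p$ on the set where $u$ is positive.
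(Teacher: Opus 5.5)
Your proof that $\Ip[X]$ is not monotonically bounded is correct and is essentially the paper's argument: your decomposition of $h$ reproduces \Cref{approximation}(2), and the rest is \Cref{go far to get nothing} combined with persistence. The $\FBLp[X]$ half, however, is not proved. The estimate you flag as the main obstacle is the heart of that half, and none of your proposed remedies works.

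\textbf{Why your remedies fail.} With test points $y_j = x^* + Cw_j^*$ you have $\|(y_j)_{j=1}^m\|_{p,\textrm{weak}} \ge m^{1/p} - C\|(w_j^*)\|_{p,\textrm{weak}}$, because the constant tuple $(x^*,\ldots,x^*)$ has weak $p$-norm $m^{1/p}$. Here $m$ depends on $W$, hence on $\vr$, so $\vr\,\|(y_j)\|_{p,\textrm{weak}}$ need not tend to $0$. The dyadic pigeonhole does not repair this. The numbers $\alpha_j = \|T^*w_j^*\|$ can be spread over roughly $\log m$ dyadic levels, so no single level need carry a fixed fraction of $\sum_j \alpha_j^p$. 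The constant you would obtain then still grows with $m$. The fallback fails for a different reason: once $x_0$ is among the $x_i$, both $f_k$ and $u$ vanish identically on $W$. An ideal argument on $W$ therefore says nothing about $u(x^*)$.

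\textbf{The missing idea.} Weight the base point by $\alpha_j$ instead of splitting the index set.
\begin{itemize}
\item Rescale the persistence witnesses $w_j^* \in W$ so that $\sum_j \alpha_j^p = 1$ and $\|(w_j^*)\|_{p,\textrm{weak}} \le 1/c$.
\item Fix $\|x^*\|=1$ and $C > \|T\|+\mu+1$, and test at $y_j = \alpha_j x^* + Cw_j^*$.
\item Then $\|(y_j)\|_{p,\textrm{weak}} \le \|(\alpha_j)\|_{\ell_p} + C/c = 1 + C/c$, independently of $m$.
\item By positive homogeneity and $W$-invariance, $h(y_j) = \alpha_j h(x^*)$, so $\|(h(y_j))_j\|_{\ell_p} = |h(x^*)|$.
\item For \emph{every} $j$, $\|T^*y_j\| \ge \alpha_j(C-\|T\|) \ge \mu\alpha_j|\langle x^*,x_0\rangle| = \mu|\delta_{x_0}|(y_j)$.
\item Since $g \ge \sup_k f_k = \mu|\delta_{x_0}|\wedge\phi$ pointwise, this gives $u(y_j)=0$ for all $j$.
\end{itemize}
Hence $|h(x^*)| = \|(h(y_j)-u(y_j))_j\|_{\ell_p} \le (1+C/c)\vr$. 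It follows that $u(x^*) \le (2+C/c)\vr$, with $C$ and $c$ independent of $\vr$, so $u=0$.

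Indices with small $\alpha_j$ cause no difficulty, because every quantity scales by $\alpha_j$. This is precisely \Cref{supremum in FBL}, which the paper invokes here; its proof does not actually use the compactness of $T$.
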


The proof relies on:

\begin{lemma}\label{lipschitz}
 If $P : \R^n \to \R$ is lattice polynomial, and $p \in [1,\infty)$, then there exists $K > 0$ so that, for any $t = (t_i)_{i=1}^n$ and $s = (s_i)_{i=1}^n$, we have $|P(t) - P(s)|^p \leq K \sum_i |t_i - s_i|^p$.
\end{lemma}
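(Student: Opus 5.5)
We prove \Cref{lipschitz} by structural induction on the lattice polynomial $P$, strengthening the claim to a Lipschitz estimate with respect to the $\ell_p^n$-norm. More precisely, we show that there is $L > 0$ with
$$
|P(t) - P(s)| \leq L \max_i |t_i - s_i| \quad \textrm{for all } t, s \in \R^n .
$$
Once this is established, the lemma follows from the bound $\max_i |t_i - s_i|^p \leq \sum_i |t_i - s_i|^p$, so we may take $K = L^p$.

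Recall that a lattice polynomial is built from the coordinate functions $t \mapsto t_i$ using finitely many operations of three kinds: linear combinations, $\vee$, and $\wedge$ (and $|\cdot|$, which is given by $|a| = a \vee (-a)$). The induction proceeds as follows.

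\emph{Base case.} A coordinate function $t \mapsto t_i$ is $1$-Lipschitz for the max-norm.

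\emph{Linear combinations.} If $P_1, P_2$ are $L_1$- and $L_2$-Lipschitz, then $\alpha P_1 + \beta P_2$ is $(|\alpha| L_1 + |\beta| L_2)$-Lipschitz, by the triangle inequality.

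\emph{Lattice operations.} For reals we have the elementary inequalities
$$
|a \vee b - c \vee d| \leq |a - c| \vee |b - d|, \qquad |a \wedge b - c \wedge d| \leq |a - c| \vee |b - d| .
$$
Consequently $P_1 \vee P_2$ and $P_1 \wedge P_2$ are $\max(L_1, L_2)$-Lipschitz.

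Since any lattice polynomial is obtained by finitely many such steps, the strengthened claim follows.

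There is no real obstacle here. The only point needing care is the elementary inequality for $\vee$, which follows by case analysis: assume without loss of generality that $a \vee b = a$ and $a \vee b \geq c \vee d$; then $0 \leq a - c \vee d \leq a - c$.

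An alternative route avoids the induction altogether. One can use the standard fact that every lattice polynomial is piecewise linear, i.e. continuous and positively homogeneous with finitely many linear pieces. This gives a Lipschitz constant equal to the maximum of the norms of those pieces. The inductive argument above is cleaner, however, and it is the one we would write out.
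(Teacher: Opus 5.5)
Your proof is correct and follows essentially the paper's route: structural induction on the lattice polynomial to get a Lipschitz bound, then raising to the $p$-th power. The only difference is the norm. You work with the max-norm, so the $\vee$ step gives $\max(L_1,L_2)$, no H\"older is needed, and $K = L^p$ does not depend on $n$. The paper works with the $\ell_1$-norm instead and then applies H\"older's inequality.
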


\begin{proof}
 By the standard H\"older Inequality, it suffices to show that for any $P$ there exists $K_0 = K_0(P)$ so that, for any $t,s$ as above, 
 \begin{equation}
  |P(t) - P(s)| \leq K_0 \sum_i |t_i - s_i| .
  \label{eq:lipschitz}
 \end{equation}
 This we establish by induction on the number of lattice operations involved in $P$.
 The base of induction reduces to considering $P(t) = t_i$, which clearly satisfies \eqref{eq:lipschitz}, with $K_0 = 1$.
 
 For the inductive step, it suffices to show that if $P_1, P_2$ satisfy \eqref{eq:lipschitz}, then so do $P_1 + P_2$, $P_1 \vee P_2$, and $a P_1$ for any $a \in \R$ (with a different constant $K_0$). This, however, is straightforward. For instance, for $P = P_1 \vee P_2$, note that $|P(t) - P(s)| \leq |P_1(t) - P_1(s)| + |P_2(t) - P_2(s)|$, hence $K_0(P) \leq K_0(P_1) + K_0(P_2)$.
\end{proof}

\Cref{lipschitz} directly implies:

\begin{corollary}\label{approximation}
Suppose $X_0$ is a dense subset of $X$. Denote by $\mathcal Y$ the family of all lattice expressions $F(\delta_{x_1}, \ldots, \delta_{x_n})$, with $x_1, \ldots, x_n \in X_0$. Then:
\begin{enumerate}
\item For any $\vr > 0$ and $f \in \FBLp[X]$ there exists $g \in \mathcal Y$ such that $\|f-g\| < \vr$. If $f$ is positive, then $g$ can be taken positive as well.
\item For any $\vr > 0$ and $f \in \Ip[X]_+$ there exist $g \in {\mathcal Y}_+$ and $h \in \Hp[X]_+$ such that $\|h\| < \vr$ and $f \leq g+h$.
\end{enumerate}
\end{corollary}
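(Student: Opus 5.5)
For part (1), I will first treat elements of $\FVL[X]$ and then pass to the closure. By definition $\FBLp[X]$ is the $\|\cdot\|_p$-closure of $\FVL[X]$, so for $f \in \FBLp[X]$ and $\vr > 0$ I can pick $f_0 = P(\delta_{y_1}, \ldots, \delta_{y_n}) \in \FVL[X]$ with $y_j \in X$, $P$ a lattice polynomial, and $\|f - f_0\| < \vr/2$. Using the density of $X_0$, choose $x_j \in X_0$ close to $y_j$ and put $g = P(\delta_{x_1}, \ldots, \delta_{x_n})$. It then suffices to check $\|f_0 - g\|_p < \vr/2$ once the $x_j$ are close enough.

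To estimate this norm I will go back to \eqref{def of p-norm}. Fix $x_1^*, \ldots, x_m^*$ with $\|(x_i^*)\|_{p,\textrm{weak}} \leq 1$. \Cref{lipschitz} gives
$$\sum_i |f_0(x_i^*) - g(x_i^*)|^p \leq K \sum_j \sum_i |\langle x_i^*, y_j - x_j\rangle|^p \leq K \sum_j \|y_j - x_j\|^p,$$
where the last step uses $\sum_i |\langle x_i^*, x\rangle|^p \leq \|x\|^p$ whenever the weak $p$-norm is at most $1$. Hence $\|f_0 - g\|_p \leq (K \sum_j \|y_j - x_j\|^p)^{1/p}$, which is small. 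For $f \geq 0$, I will replace $g$ by $|g|$. This is still a lattice expression in the $\delta_{x_j}$, and $\big\| f - |g| \big\| = \big\| |f| - |g| \big\| \leq \|f - g\|$.

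For part (2), I will start from $f \in \Ip[X]_+$, which is a $\|\cdot\|_p$-limit of elements of $I_{w^*}[X]$. Choose $u \in I_{w^*}[X]$ with $\|f - u\| < \vr/2$. Replacing $u$ by $|u|$ keeps it in the ideal and does not increase the distance to $f$, so I may take $u \geq 0$. Since $u$ lies in the ideal generated by the $\delta_x$, there is $w \in \FVL[X]_+$ with $u \leq w$ (a finite combination such as $C\sum|\delta_{y_j}|$). Then
$$f \leq u + |f - u| \leq w + |f-u|.$$
Next, apply part (1) to $w$: this gives $g_0 \in \mathcal Y_+$ with $\|w - g_0\| < \vr/2$. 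Set $g = g_0$ and $h = |f - u| + |w - g_0| \in \Hp[X]_+$. Then $\|h\| < \vr$ and $f \leq g + h$.

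The only step needing any care is the uniform estimate in part (1), that is, controlling the $p$-summing norm through \Cref{lipschitz}; the rest is routine lattice manipulation in $\Hp[X]$.
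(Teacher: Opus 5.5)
Your proposal is correct and is the argument the paper intends when it says the corollary follows directly from \Cref{lipschitz}. You move the generating points into $X_0$, control the $p$-summing norm of the difference via the Lipschitz bound together with $\sum_i |\langle x_i^*, x\rangle|^p \leq \|x\|^p \|(x_i^*)\|_{p,\textrm{weak}}^p$, and then handle part (2) by routine lattice bookkeeping: dominate an element of $I_{w^*}[X]$ by some $C\sum_j |\delta_{y_j}|$ and apply part (1).
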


\begin{lemma}\label{go far to get nothing}
Suppose $X_0$ is dense in $X$. Then, for any $g \in \Ip[X]$ and $\vr > 0$, there exist $x_1, \ldots, x_m \in X_0$ so that, for every $x_1^*, \ldots, x_n^* \in \cap_{i=1}^m x_i^\perp$ with $\|(x_j^*)\|_{p,\textrm{weak}} \leq 1$, we have $\sum_j |g(x_j^*)|^p < \vr^p$.
\end{lemma}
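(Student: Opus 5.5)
We reduce to the case $g \in {\mathcal Y}$ via \Cref{approximation}(2), and then handle lattice expressions directly using \Cref{lipschitz}.

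\textbf{Reduction.} Since $\Ip[X]$ is a vector lattice and $\sum_j |g(x_j^*)|^p \leq \sum_j |g|(x_j^*)^p$, we may assume $g \geq 0$. By \Cref{approximation}(2), write $g \leq g_0 + h$ with $g_0 \in {\mathcal Y}_+$ and $h \in \Hp[X]_+$, $\|h\|_p < \vr/2$. For any $(x_j^*)$ with $\|(x_j^*)\|_{p,\textrm{weak}} \leq 1$, Minkowski's inequality in $\ell_p$ gives
$$
\Big(\sum_j g(x_j^*)^p\Big)^{1/p} \leq \Big(\sum_j g_0(x_j^*)^p\Big)^{1/p} + \|h\|_p .
$$
The second term is below $\vr/2$ by \eqref{def of p-norm}. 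It therefore suffices to make the first term at most $\vr/2$, and in fact we will make it vanish.

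\textbf{The lattice expression.} Write $g_0 = F(\delta_{x_1}, \ldots, \delta_{x_m})$ with $x_1, \ldots, x_m \in X_0$ and $F$ a lattice polynomial; these $x_i$ are the ones we choose. If $x^* \in \cap_{i=1}^m x_i^\perp$, then $\delta_{x_i}(x^*) = 0$ for all $i$, so $g_0(x^*) = F(0, \ldots, 0) = 0$, because lattice polynomials are positively homogeneous and hence vanish at the origin. (Alternatively, \Cref{lipschitz} with $s = 0$ gives $|g_0(x^*)|^p \leq K \sum_i |\langle x^*, x_i \rangle|^p = 0$.) Hence $\sum_j g_0(x_j^*)^p = 0$ whenever all $x_j^* \in \cap_i x_i^\perp$, and combining with the reduction yields $\sum_j |g(x_j^*)|^p < \vr^p$.

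\textbf{Main step.} The only substantive ingredient is \Cref{approximation}(2): every element of $\Ip[X]_+$ is dominated, up to a $\|\cdot\|_p$-small error in $\Hp[X]_+$, by a positive lattice expression in $\delta_x$'s with $x \in X_0$. That statement is what uses \Cref{lipschitz} together with the density of $X_0$. One point needs care: the approximation must be a domination $g \leq g_0 + h$ rather than a norm approximation. This is because $\Ip[X]$ is the closure of an ideal, so $g$ is a norm limit of functions dominated by elements of $\FVL[X]$. We then perturb the generating vectors into $X_0$, and the cost of doing so is controlled in $\|\cdot\|_p$ by \Cref{lipschitz}, which bounds $|F(\delta_{y_i}) - F(\delta_{x_i})|$ by $\big(K\sum_i |\delta_{y_i - x_i}|^p\big)^{1/p}$, a function of small $p$-summing norm.
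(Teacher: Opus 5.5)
Your proof is correct and is essentially the paper's own argument. You reduce to $g \geq 0$, use \Cref{approximation}(2) to get $g \leq g_0 + h$ with $g_0$ a lattice expression in $\delta_{x_i}$, $x_i \in X_0$, note that $g_0$ vanishes on $\cap_i x_i^\perp$, and bound what remains by $\|h\|_p$. The only difference is cosmetic: the paper uses the pointwise bound $g(x_j^*) \leq h(x_j^*)$ directly, whereas you go through Minkowski with $\vr/2$.
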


\begin{proof}
We can and do assume $g \geq 0$. Fix $\vr > 0$, 
by \Cref{approximation}(2), there exist $g_1 = G\big(\delta_{x_1}, \ldots, \delta_{x_m}\big) \in \FVL[X]_+$ ($G$ is a lattice expression, $x_1, \ldots, x_m \in X_0$) and $g_2 \in \Hp[X]$ with $\|g_2\| < \vr$, such that $g \leq g_1 + g_2$. 
If $x_1^*, \ldots, x_n^* \in \cap_{i=1}^m x_i^\perp$, then, for every $j$, $g_1(x_j^*) = 0$, hence $g(x_j^*) \leq g_2(x_j^*)$. Therefore,
$$
\sum_j |g(x_j^*)|^p \leq \|g_2\|^p \|(x_j^*)\|_{p,\textrm{weak}}^p < \vr^p . \qedhere
$$
\end{proof}

\begin{proposition}\label{supremum in FBL}
Suppose $T \in B(Z,X)$ is compact and dual $p$-persistent relative to a dense $X_0 \subset X$. Define $\phi : X^* \to \R : x^* \mapsto \|T^*x^*\|$. Fix a norm one $x_0 \in X_0$ and $\mu > 0$. If $g \in \FBLp[X]$ satisfies $g \geq f = \mu \big| \delta_{x_0} \big| \wedge \phi$, then $g \geq \mu \big| \delta_{x_0} \big|$.
\end{proposition}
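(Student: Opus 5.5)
The plan is to fix a norm one $x^* \in X^*$ and show $g(x^*) \geq \mu |\langle x^*, x_0\rangle| - \sigma$ for every $\sigma > 0$; positive homogeneity of $g$ and of $\mu|\delta_{x_0}|$ then gives $g \geq \mu|\delta_{x_0}|$ on all of $X^*$. We may assume $\mu|\langle x^*,x_0\rangle| > \sigma$, since otherwise the inequality is trivial. As in the proof of \Cref{no compact no fatou}, the idea is to perturb $x^*$ by functionals that do not affect $\delta_{x_0}$ but make $\phi$ large, so that the truncation in $f$ becomes harmless. Compactness of $T$ rules out the weak$^*$-null-net trick used there. We replace it with the quantitative information supplied by dual $p$-persistence, together with the ``invisibility'' of $g$ on annihilators of finitely many vectors of $X_0$ provided by \Cref{approximation} and \Cref{go far to get nothing}.

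The first step is the splitting of $g$. By \Cref{approximation}(1), for $\vr > 0$ choose $g_1 = G(\delta_{x_1}, \ldots, \delta_{x_m})$ with $x_1, \ldots, x_m \in X_0$ and $\|g - g_1\|_p < \vr$. Enlarging the list, we may assume $x_0$ is among the $x_i$. Put $h = |g - g_1|$, so that $\|h\|_p < \vr$. For $y^* \in W := \cap_i x_i^\perp$ and any $t$, we have $g_1(x^* + t y^*) = g_1(x^*)$ and $\delta_{x_0}(x^* + t y^*) = \delta_{x_0}(x^*)$. Hence
$$
g_1(x^*) + h(x^* + t y^*) \;\geq\; g(x^* + t y^*) \;\geq\; \mu|\langle x^*,x_0\rangle| \wedge \phi(x^* + t y^*),
$$
and similarly $g(x^*) \geq g_1(x^*) - h(x^*)$.

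The second step uses persistence. Since $T$ is dual $p$-persistent relative to $X_0$, we have $\pi_p(T^*|_W) > c$. So there are $y_1^*, \ldots, y_n^* \in W$ with $\|(y_j^*)\|_{p,\textrm{weak}} \leq 1$ and $\sum_j \phi(y_j^*)^p > c^p$. Take $t$ large, say $t \geq (\mu + \|T\|)/s$ where $s$ is a threshold to be determined. For every $j$ with $\phi(y_j^*) \geq s$ we get $\phi(x^* + t y_j^*) \geq t\phi(y_j^*) - \|T\| \geq \mu$, and so $g_1(x^*) \geq \mu|\langle x^*,x_0\rangle| - h(x^* + t y_j^*)$. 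It then suffices to find one such $j$ with both $h(x^* + t y_j^*)$ and $h(x^*)$ small; the latter is at most $\vr \|x^*\| = \vr$.

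The main obstacle is this final pigeonhole: controlling $h$ at the points $x^* + t y_j^*$ uniformly enough. The only control we have is $\sum_j h(z_j^*)^p \leq \vr^p \|(z_j^*)\|_{p,\textrm{weak}}^p$. For $z_j^* = x^* + t y_j^*$ this weak norm is at most $n^{1/p} + t$. Meanwhile, the number of indices with $\phi(y_j^*) \geq s$ is only bounded below in terms of $c$, $s$ and $n$, and $n$ comes from persistence. The point requiring care is the order of quantifiers: we must choose $\vr$ after $n$ and $t$ are fixed. This is legitimate because persistence gives $c$ independently of the finite set $\{x_i\}$. So the first attempt will be to fix $x_1, \ldots, x_m$ via a preliminary approximation, extract the $y_j^*$ and the count of good $j$, then re-approximate $g$ with a smaller $\vr$ using a list that contains the old one. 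The annihilator then only shrinks, so the $y_j^*$ must be re-chosen; this is where the argument could circularly break. If it does, the fallback is to use \Cref{go far to get nothing} applied to $h$ directly, combined with homogeneity: $h(x^* + t y_j^*) = t\,h(x^*/t + y_j^*)$. We would also use weak$^*$ continuity of $g$ on bounded sets (here compactness of $T$, via \Cref{weak* continuity}, makes $\phi$ and $f$ weak$^*$ continuous) to absorb the $x^*/t$ term. We then average over $j$ to find a good index.
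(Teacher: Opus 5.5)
\textbf{Verdict: there is a genuine gap.} Everything up to the final pigeonhole is correct:
\begin{itemize}
\item $g\le g_1+h$;
\item $g_1(x^*+ty^*)=g_1(x^*)$ for $y^*\in W$;
\item the truncation in $f$ is inactive once $t\phi(y_j^*)\ge\mu+\|T\|$.
\end{itemize}
But the pigeonhole is the whole difficulty, and neither resolution you offer closes it.

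With unweighted points $x^*+ty_j^*$ and a common $t$, the family has weak $p$-norm of order $n^{1/p}+t$. Meanwhile $t$ is dictated by the smallest good value $s$ of $\phi(y_j^*)$. Persistence controls only $\sum_j\phi(y_j^*)^p>c^p$, not how these values are distributed. For instance, nothing prevents $\phi(y_j^*)^p\approx c^p/(j\log n)$; then $s^p\cdot\#\{j:\phi(y_j^*)\ge s\}$ is of order $c^p/\log n$ for every threshold $s$. So your bound on $\min_j h(x^*+ty_j^*)$ is $\vr$ times a factor that grows with $n$.

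Choosing $\vr$ after $n$ is circular, as you suspected. The number $n$ and the values $\phi(y_j^*)$ come from persistence inside $W=\bigcap_i x_i^\perp$, and $W$ comes from the approximation at accuracy $\vr$. Uniformity of $c$ gives no control on $n$.

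The fallback does not help either. Applying \Cref{go far to get nothing} to $h$ only yields $\sum_j h(y_j^*)^p<\eta^p$, which $\|h\|_p<\vr$ already gives for every family of weak $p$-norm at most $1$. The real obstruction is the shift by $x^*$: $h(x^*+ty^*)-h(ty^*)=t\,[h(x^*/t+y^*)-h(y^*)]$. Continuity makes the bracket small, but not $o(1/t)$. And $h$ need not be subadditive (e.g.\ $|\delta_a|\wedge|\delta_b|$ is not), so you cannot split off $h(x^*)$.

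\textbf{The missing idea is to weight the base point, i.e.\ to use an index-dependent scale.} After scaling so that $c=1$, take $x_1^*,\ldots,x_n^*\in W$ with $\|(x_j^*)_j\|_{p,\textrm{weak}}\le 1$ and $\sum_j\alpha_j^p=1$, where $\alpha_j=\phi(x_j^*)$. Fix $C>\|T\|+\mu+1$, independent of $n$ and of the accuracy $\delta$. Evaluate at the points $\alpha_jx^*+Cx_j^*$, i.e.\ use $t_j=C/\alpha_j$ in place of a common $t$. Then three things hold:
\begin{itemize}
\item $\|(\alpha_jx^*+Cx_j^*)_j\|_{p,\textrm{weak}}\le\|(\alpha_j)_j\|_p+C\le C+1$, regardless of $n$.
\item $\phi(\alpha_jx^*+Cx_j^*)\ge\alpha_j(C-\|T\|)>\alpha_j\mu$, so $f(\alpha_jx^*+Cx_j^*)=\mu\alpha_j|\langle x^*,x_0\rangle|$ at every index, with no good/bad split.
\item $g_1(\alpha_jx^*+Cx_j^*)=\alpha_jg_1(x^*)$, by homogeneity.
\end{itemize}
Now apply the $\ell_p^n$ triangle inequality to $(g(\alpha_jx^*+Cx_j^*))_j-(\alpha_jg_1(x^*))_j$, whose $\ell_p$-norm is less than $(C+1)\delta$. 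This gives $g_1(x^*)\ge\mu|\langle x^*,x_0\rangle|-(C+1)\delta$, with the quantifiers in the natural order $\delta\to g_1\to(x_j^*)$. This is the paper's argument.

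Alternatively, your single-index idea can be rescued with a subadditive majorant. By \Cref{FBLp norm}, $h\le\phi_S$ with $\pi_p(S)<\vr$. Then $\sum_j\phi_S(y_j^*)^p<\vr^p$ forces some $j$ with $\phi_S(y_j^*)<(\vr/c)\,\phi(y_j^*)$. Taking $t=(\mu+\|T\|+1)/\phi(y_j^*)$ gives $h(x^*+ty_j^*)\le\phi_S(x^*)+t\phi_S(y_j^*)\le\vr\,(1+(\mu+\|T\|+1)/c)$. Some such extra ingredient is needed; as written, the proposal does not contain one.
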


\begin{proof}
By scaling, we assume that $c$ in the definition of dual $p$-persistence equals $1$.
Pick a norm one $x^* \in X^*$, and show that $g(x^*) \geq \mu |\langle x^*, x_0\rangle|$. 

Pick $\delta > 0$, and write $g = g_1 + g_2$, where $g_1 = G(\delta_{x_1}, \ldots, \delta_{x_m})$, $G$ being a lattice polynomial, and $\|g_2\| < \delta$.
Find $x_1^*, \ldots, x_n^* \in \cap_{i=0}^m x_i^\perp$ so that $\|(x_j^*)\|_{p,{\textrm{weak}}} \leq 1$, and $\sum_j \alpha_j^p = 1$, where $\alpha_j = \|T^* x_j^*\|$.

Pick $C > \|T\| + \mu + 1$. For each $j$, $g(\alpha_j x^* + C x_j^*) = \alpha_j g_1(x^*) + g_2(\alpha_j x^* + C x_j^*)$, or, in other words,
$$
\big( g(\alpha_j x^* + C x_j^*) \big)_j - \big( \alpha_j \big)_j g_1(x^*) = \big( g_2(\alpha_j x^* + C x_j^*) \big)_j .
$$
The triangle inequality in $\ell_p^n$ gives us:
$$
\Big|\big\| \big( g(\alpha_j x^* + C x_j^*) \big)_j \big\|_p - g_1(x^*) \Big| \leq \big\| \big( g_2(\alpha_j x^* + C x_j^*) \big)_j \big\|_p 
$$
(here we use our scaling: $\big\| \big( \alpha_j \big)_j \big\|_p = 1$). Taking $\|x^*\| = 1$ into account,
$$
\big\| \big( \alpha_j x^* + C x_j^* \big)_j \big\|_{p, {\textrm{weak}}} \leq C \big\| \big( x_j^* \big)_j \big\|_{p, {\textrm{weak}}} + \big\| \big( \alpha_j \big)_j \big\|_p \leq C+1 ,
$$
hence $\big\| \big( g_2(\alpha_j x^* + C x_j^*) \big)_j \big\|_p < \delta (C+1)$, and therefore,
\begin{equation}
\Big|\big\| \big( g(\alpha_j x^* + C x_j^*) \big)_j \big\|_p - g_1(x^*) \Big| < (C+1)\delta .
\label{g g1 close}
\end{equation}

Now note that
$$
f(\alpha_j x^* + C x_j^*) = \mu \alpha_j \big| \langle x^*, x_0 \rangle \big| \wedge \|T^* (\alpha_j x^* + C x_j^*)\| ,
$$
and
$$
\|T^* (\alpha_j x^* + C x_j^*)\| \geq C \|T^* x_j^*\| - \alpha_j \|T^* x^*\| = \alpha_j (C - \|T^*x^*\|) .
$$
By our choice of $C$, $C - \|T^*x^*\| > \mu + 1$, hence $f(\alpha_j x^* + C x_j^*) = \mu \alpha_j \big| \langle x^*, x_0 \rangle \big|$. We clearly have
$$
\big\| \big( g(\alpha_j x^* + C x_j^*) \big)_j \big\|_p \geq \big\| \big( f(\alpha_j x^* + C x_j^*) \big)_j \big\|_p = \mu \big\| \big( \alpha_j \big)_j \big\|_p \big| \langle x^*, x_0 \rangle \big| = \mu \big| \langle x^*, x_0 \rangle \big| ,
$$
so from \eqref{g g1 close}, $g_1(x^*) \geq \mu \big| \langle x^*, x_0 \rangle \big| - (C+1)\delta$. Hence,
$$
g(x^*) \geq g_1(x^*) - \|g_2\| \geq \mu \big| \langle x^*, x_0 \rangle \big| - (C+2)\delta .
$$
To complete the proof, recall that $\delta$ can be arbitrarily small.
\end{proof}

\begin{proof}[Proof of \Cref{failure of fatou}]
We keep the earlier notation: $(Z_k)$ is an NFDR for $T$, $T_k = T|_{Z_k}$, $\phi(x^*) = \|T^* x^*\|$, $\phi_k(x^*) = \|T_k^* x^*\|$. By scaling, we assume that $c$ in the definition of dual $p$-persistence equals $1$.

(1) First we establish the failure of $\lambda$-Fatou for $\FBLp[X]$. Fix $\mu > \lambda \pi_p(T^*)$ and a norm one $x_0 \in X_0$. Let $f_k = \phi_k \wedge \mu \big| \delta_{x_0} \big|$; clearly $f_k \in \FBLp[X]$, and $\|f_k\| \leq \|\phi_k\| \leq \pi_p(T^*)$.
If $g \in \FBLp[X]$, and $g \geq f_k$ for any $k$, then $g \geq \mu \big| \delta_{x_0} \big| \wedge \phi$, hence, by \Cref{supremum in FBL}, $g \geq \mu \big| \delta_{x_0} \big|$, so $\|g\| \geq \mu > \lambda \pi_p(T^*)$.

(2) We show that $\Ip[X]$ is not monotonically bounded by proving that there is no $g \in \Ip[X]$ so that $g \geq \phi_k$ for every $k$.
Indeed, if such a $g$ exists, then $g \geq \sup_k \phi_k = \phi$. By \Cref{go far to get nothing}, there exist $x_1, \ldots, x_m \in X_0$ so that $\sum_j g(x_j^*)^p < 1/2$ whenever $x_1^*, \ldots, x_n^* \in \cap_i x_i^\perp$ satisfy $\|(x_j^*)\|_{p,{\textrm{weak}}} \leq 1$.
However, by the dual $p$-persistence of $T$, we can find $x_1^*, \ldots, x_n^*$ as above so that $\sum_j \phi(x_j^*)^p > 1$, yielding the desired contradiction.
\end{proof}

\begin{remark}
The proof of \Cref{failure of fatou} shows that if, in addition, $T$ has an SFDR, then $\FBLp[X]$ and $\Ip[X]$ fail the countable $\lambda$-Fatou property and countable monotonic boundedness, respectively.
\end{remark}

In a similar fashion, we show:

\begin{theorem}\label{Ip not Hp general}
Suppose there exists a Banach space $Z$ and a compact, dual $p$-persistent $T \in B(Z, X)$ such that $T^* \in \Pi_p(X^*,Z^*)$. Then $\Hp[X] \supsetneq \Ip[X] \supsetneq \nip[X]$.
\end{theorem}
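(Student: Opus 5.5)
We need two strict inclusions, and both will be witnessed by functions built from $T$. Set $\phi = \phi_{T^*}$, i.e.\ $\phi(x^*) = \|T^* x^*\|$. Since $T$ is compact and $T^* \in \Pi_p$, \Cref{compact_adjoint} gives $\phi \in \Hp[X]$ with $\|\phi\|_p = \pi_p(T^*)$.

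\textbf{Step 1: $\phi \notin \Ip[X]$, so $\Ip[X] \subsetneq \Hp[X]$.} The argument is the one from part (2) of the proof of \Cref{failure of fatou}. Scale so that the persistence constant is $c = 1$. Suppose $\phi \in \Ip[X]$. By \Cref{go far to get nothing} with $\vr = 1/2$, there are $x_1, \ldots, x_m \in X_0$ such that $\sum_j \phi(x_j^*)^p < 1/2$ whenever $x_1^*, \ldots, x_n^* \in \cap_i x_i^\perp$ and $\|(x_j^*)\|_{p,\textrm{weak}} \le 1$. But $\pi_p(T^*|_{\cap_i x_i^\perp}) > 1$, so some such tuple has $\sum_j \phi(x_j^*)^p > 1$. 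This is a contradiction.

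\textbf{Step 2: a function in $\Ip[X] \setminus \nip[X]$.} The natural candidate is $f = \mu|\delta_{x_0}| \wedge \phi$, with $x_0 \in X_0$ of norm one and $\mu > 0$. Since $0 \le f \le \mu|\delta_{x_0}|$ and $f$ lies in $H_{w^*}[X]$ (here we use weak$^*$ continuity of $\phi$ on bounded sets, which comes from compactness), $f$ lies in the ideal $I_{w^*}[X] \subset \nip[X]$. So $f$ itself is not a witness. Instead, we build a witness as a norm-convergent sum of such truncations, arranged so that no single element of $\FBLp[X]$ can dominate it.

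Take unit vectors $x^{(n)} \in X_0$ and weights $\mu_n > 0$. Put
$$
f_n = \mu_n |\delta_{x^{(n)}}| \wedge 2^{-n}\phi, \qquad h = \sum_n f_n .
$$
Each $f_n$ lies in $I_{w^*}[X]$ and $\|f_n\|_p \le 2^{-n}\pi_p(T^*)$, so the series converges in $\Hp[X]$ and $h \in \Ip[X]$. Suppose $h \le g$ for some $g \in \FBLp[X]$. Then $g \ge 2^{n}f_n \cdot 2^{-n}$ for each $n$, and more usefully $2^n g \ge \mu_n 2^n |\delta_{x^{(n)}}| \wedge \phi$. Applying \Cref{supremum in FBL} to $2^n g$ gives $g \ge \mu_n |\delta_{x^{(n)}}|$, hence $\|g\| \ge \mu_n$ for every $n$. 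Choosing $\mu_n \to \infty$ makes this impossible, so $h \notin \nip[X]$.

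\textbf{The main obstacle} is justifying the application of \Cref{supremum in FBL}. Its proof only uses the relevant inequality at points of the form $\alpha_j x^* + C x_j^*$, where the $x_j^*$ are chosen to annihilate the finitely many vectors relevant to $g$ together with $x_0$. At such points the truncation $f = \mu|\delta_{x_0}| \wedge \phi$ collapses to $\mu|\delta_{x_0}|$, once $C$ is large relative to $\mu$. Two things must therefore be verified:
\begin{enumerate}
\item the inequality $2^n g \ge \mu_n 2^n |\delta_{x^{(n)}}| \wedge \phi$ really holds, which follows from $2^n g \ge 2^n f_n$ and $2^n f_n = (\mu_n 2^n |\delta_{x^{(n)}}|) \wedge \phi$;
\item the constant $C$ in that proof may depend on $n$, which is harmless because $n$ is fixed when the proposition is applied.
\end{enumerate}
With both checks in place the argument goes through.

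If the bookkeeping fails, the fallback is a simpler witness: take $h = \mu|\delta_{x_0}| \wedge \phi$ for a single $x_0$ and show directly that any $g \in \FBLp[X]$ with $g \ge \lambda h$ for all $\lambda$ is impossible. This uses that $\nip[X]$ is the ideal generated by $\FBLp[X]$, so membership of $h$ would give $h \le g$ for some $g \in \FBLp[X]_+$, and the scaling estimate above then yields the contradiction.
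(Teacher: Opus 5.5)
Your argument is correct and is essentially the paper's: Step 1 is exactly its use of \Cref{go far to get nothing} together with dual $p$-persistence, and Step 2 unwinds the series construction from the proof of \Cref{ideal is closed criterion}, fed by \Cref{supremum in FBL}. Your rescaling by $2^n$ with $\mu_n\to\infty$ is routine bookkeeping that the paper leaves implicit. The ``obstacle'' checks are immediate. You should drop the fallback, which cannot work: a single truncation $\mu|\delta_{x_0}|\wedge\phi$ is dominated by $\mu|\delta_{x_0}|\in\FBLp[X]$, so it always lies in $\nip[X]$, as you noted yourself earlier.
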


The proof requires:

\begin{proposition}\label{ideal is closed criterion}
The ideal $\nip[X]$ is closed in $\Hp[X]$ if and only if there exists a constant $C \geq 1$ such that for every $f \in \nip[X]$ there exists $g \in \FBLp[X]$ with $|f| \leq g$ and $\|g\| \leq C \|f\|$.
\end{proposition}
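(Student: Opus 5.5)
One direction is immediate. Assume the constant $C$ exists, and let $(f_n) \subset \nip[X]$ converge in $\Hp[X]$ to some $f$. We need to show $f \in \nip[X]$. Passing to a subsequence, we may assume $\|f_{n+1} - f_n\| < 2^{-n}$ and $\|f_1\|$ finite. For each $n$ the hypothesis gives $g_n \in \FBLp[X]_+$ with $|f_{n+1} - f_n| \leq g_n$ and $\|g_n\| \leq C 2^{-n}$, and likewise $g_0 \geq |f_1|$ with $g_0 \in \FBLp[X]$. The series $g = \sum_n g_n$ converges in norm in $\FBLp[X]$, which is complete. Norm convergence in $\|\cdot\|_p$ dominates pointwise convergence on $\ball(X^*)$, so the limit is the pointwise sum (compare \Cref{Cauchy seq}). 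Pointwise, $|f| \leq |f_1| + \sum_n |f_{n+1} - f_n| \leq g$, because $f_n \to f$ pointwise. Since $g \in \FBLp[X]$ and $f \in H_{w^*}[X]$, we conclude $f \in \nip[X]$. Here we use that $\nip[X]$ is exactly the set of $f \in H_{w^*}[X]$ dominated by some element of $\FBLp[X]_+$: an ideal generated by a sublattice consists of functions dominated by finite sums of moduli of its elements, and these sums lie in $\FBLp[X]_+$ again.

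For the converse, assume $\nip[X]$ is closed, so it is a Banach lattice under $\|\cdot\|_p$. I would run a Baire category / open-mapping argument. For $n \in \N$, let
\[ A_n = \{ f \in \nip[X] : |f| \leq g \text{ for some } g \in \FBLp[X] \text{ with } \|g\| \leq n \}. \]
By the description of $\nip[X]$ above, $\nip[X] = \bigcup_n A_n$. I expect the main obstacle to be that the $A_n$ need not obviously be closed, so Baire cannot be applied to them directly. I would get around this by working with the closures and then using a successive-approximation argument, as in the proof of the open mapping theorem.

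By Baire, some $\overline{A_n}$ contains a ball. Each $A_n$ is convex, symmetric, and solid, and $A_n + A_m \subset A_{n+m}$. Hence $\overline{A_{2n}}$ contains a ball $\delta\,\ball(\nip[X])$ centred at $0$. By homogeneity, every $f$ with $\|f\| \leq 1$ is within $\vr$ of some element of $A_{2n/\delta}$. Now fix $f$ with $\|f\| \leq 1$ and iterate. Choose $h_1 \in A_{2n/\delta}$ with $\|f - h_1\| < 1/2$, then $h_2 \in A_{n/\delta}$ with $\|f - h_1 - h_2\| < 1/4$, and so on, so that $h_k$ is dominated by some $g_k \in \FBLp[X]$ with $\|g_k\| \leq 2^{2-k} n/\delta$. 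Then $G = \sum_k g_k$ converges in $\FBLp[X]$ with $\|G\| \leq 4n/\delta$. Moreover $f = \sum_k h_k$ in norm, hence pointwise, so $|f| \leq \sum_k |h_k| \leq G$. This yields $C = 4n/\delta$, after rescaling, which gives the required constant.
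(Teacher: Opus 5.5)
Your proof is correct. The first direction is the paper's argument in slightly different form. The paper shows that the sum of any absolutely convergent series $\sum_n f_n$ in $\nip[X]$ lies in $\nip[X]$, by dominating each $|f_n|$ by some $g_n \in \FBLp[X]$ with $\|g_n\| \leq C\|f_n\|$ and summing. That is exactly your telescoping argument.

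For the converse you take a genuinely different route. The paper argues by contraposition and constructs the bad element directly. If no $C$ works, it picks $f_n \in \nip[X]$ with $\|f_n\| < 2^{-n}$ such that every $g \in \FBLp[X]$ dominating $|f_n|$ has $\|g\| > 2^n$. Then $f = \sum_n f_n$ lies in $\Ip[X]$ but not in $\nip[X]$: any $g \in \FBLp[X]$ with $g \geq |f|$ would dominate every $f_n$, hence satisfy $\|g\| > 2^n$ for all $n$. This tacitly requires $f_n \geq 0$, which is harmless after replacing $f_n$ by $|f_n|$.

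Your Baire category plus successive-approximation argument reaches the same conclusion non-constructively. It essentially reruns the proof of the open mapping theorem for the exhaustion $\nip[X] = \bigcup_n A_n$ by convex, symmetric, solid sets.

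What each buys: the paper's route is shorter and avoids Baire category entirely. It also exhibits an explicit element of $\Ip[X] \setminus \nip[X]$, which is the form in which the proposition is used in the proof of \Cref{Ip not Hp general}. Your route is heavier, but it is a general principle that applies to any countable exhaustion of a Banach space by such sets.
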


\begin{proof}
Suppose there exists a constant $C$ as above. We show that the sum of any absolutely convergent series in $\nip[X]$ belongs to $\nip[X]$. Indeed, suppose $f = \sum_n f_n$ and $\sum_n \|f_n\| < \infty$ (the sum exists in $\Hp[X]$).
For each $n$, find $g_n \in \FBLp[X]$ such that $|f_n| \leq g_n$ and $\|g_n\| \leq C \|f_n\|$, then $g = \sum_n g_n \in \FBLp[X]$, and $|f| \leq g$, so $f \in \nip[X]$.

Conversely, suppose $C$ as above does not exist. Then, for each $n$, we can find $f_n \in \nip[X]$ so that $\|f_n\| < 2^{-n}$, yet $\|g\| > 2^n$ whenever $g \in \FBLp[X]$ and $|f_n| \leq g$. Then $f = \sum_n f_n \in \Ip[X] \backslash \nip[X]$.
\end{proof}

\begin{proof}[Proof of \Cref{Ip not Hp general}]
Suppose $T \in B(Z,X)$ is compact and dual $p$-persistent relative to a dense $X_0 \subset X$. Define $\phi : X^* \to \R : x^* \mapsto \|T^*x^*\|$. Fix $x_0 \in X_0$.
By \Cref{supremum in FBL}, if $g \in \FBLp[X]$ satisfies $g \geq f = \big| \delta_{x_0} \big| \wedge \phi$, then $g \geq \big| \delta_{x_0} \big|$. So, $\|g\| \geq \|x_0\|$, while $\|f\| \leq \|\phi\| = \pi_p(T^*)$.
In light of \Cref{ideal is closed criterion}, $\Ip[X] \supsetneq \nip[X]$.

Additionally, $\phi \in \Hp[X]$ (due to $T$ being compact with $p$-summing adjoint), while $\phi \notin \Ip[X]$, due to the dual $p$-peristence of $T$ and \Cref{go far to get nothing}. Thus, $\Hp[X] \supsetneq \Ip[X]$.
\end{proof}

\section{Failure of Fatou property in function spaces via non-compact operators}\label{no compactness}

In this section, we begin our series of concrete examples of function spaces failing the Fatou property.

Recall that a Banach space $X$ is said to have the Complete Continuity Property (CCP), also referred to as the Compact Range Property, if every operator $T: L_1 \to X$ is Dunford--Pettis (in other words, completely continuous).

\begin{proposition}\label{no CCP}
    If $X$ fails the CCP, and $1 \leq p < \infty$, then each of the lattices $\FBLp[X]$, $\Ip[X]$, and $\Hp[X]$ fails the $\lambda$-Fatou property, for any $\lambda > 0$.
\end{proposition}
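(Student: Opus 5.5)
We will deduce the statement from \Cref{no compact no fatou}. It suffices to produce a Banach space $Z$ and a non-compact $T \in B(Z,X)$ with $T^* \in \Pi_p(X^*,Z^*)$, preferably with an SFDR so that we even get failure of the countable $\lambda$-Fatou property. The natural choice is $Z = L_1 = L_1[0,1]$. Since $X$ fails the CCP, there is an operator $S : L_1 \to X$ which is not Dunford--Pettis. We will take $T = S$, or a suitable modification of it.

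\textbf{The adjoint is summing.} The adjoint $T^* : X^* \to L_\infty$ has range in an $L_\infty$-space. By Grothendieck's theorem, every operator from $X^*$ into $L_\infty$ whose adjoint factors through $L_1$ behaves well. The cleaner route is to observe that $T^{**}$ maps $L_1^{**}$, an $L_1$-space, into $X^{**}$, so $T^{**}$ is $2$-summing by Grothendieck. That gives the wrong operator, however. The alternative is to use the standard fact that an operator $u : Y \to L_\infty(\mu)$ (or into any $C(K)$) is $p$-summing exactly when it is $p$-integral, which is not automatic.

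We will therefore make a different choice and reduce the summability requirement using Pietsch domination. Let $j$ denote the formal identity $L_\infty \to L_p$, which is $p$-summing. We would like $T^*$ to factor through $j$, that is, $T = S \circ j^*$ restricted suitably. So we will set $Z = L_{p'}$ (or $Z = L_1$ with $T = S$ composed with the inclusion $L_{p'} \hookrightarrow L_1$). Concretely, let $T = S \circ \iota$, where $\iota : L_{p'} \to L_1$ is the formal inclusion when $p>1$. When $p = 1$, take $\iota : L_\infty \to L_1$ instead, or use $C[0,1] \to L_1$. Then
\[ T^* = \iota^* S^* , \qquad \iota^* : L_\infty \to L_p \]
is the formal identity, which is $p$-summing with $\pi_p = 1$. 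Hence $T^* \in \Pi_p(X^*, Z^*)$ by the ideal property.

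\textbf{Non-compactness (the main obstacle).} We must check that $T = S\iota$ is not compact; this is where the failure of the Dunford--Pettis property enters. Since $S$ is not completely continuous, there is a weakly null sequence $(g_n) \subset L_1$ with $\|Sg_n\| \ge c > 0$. We want to replace $(g_n)$ by a bounded sequence in $L_{p'}$, or in $L_\infty$, that is still weakly null in $L_1$. By the Dunford--Pettis criterion, a weakly null sequence in $L_1$ is uniformly integrable, so it can be truncated: write $g_n = g_n\mathbf 1_{|g_n|\le M} + r_n$ with $\sup_n\|r_n\|_1 < c/(2\|S\|)$. Passing to a subsequence (using weak$^*$ compactness in $L_\infty$ together with Rosenthal/Kadec--Pełczyński-type extraction), the truncated parts $h_n$ can be taken with differences $h_{n}-h_{n+1}$ weakly null, and we still have $\|S(h_n - h_{m})\|$ bounded below along a subsequence. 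We expect this extraction to be the delicate point. In particular, $(h_n)$ is a bounded sequence in $L_\infty \subset L_{p'}$ and $(Sh_n)$ has no norm-convergent subsequence, so $T$ is not compact.

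\textbf{Conclusion.} Because $Z = L_{p'}$ (or $C[0,1]$) is separable, taking $Z_k$ to be an increasing sequence of finite-dimensional subspaces with dense union gives an SFDR for $T$. \Cref{no compact no fatou} then yields the (countable) failure of the $\lambda$-Fatou property for $\FBLp[X]$, $\Ip[X]$ and $\Hp[X]$.
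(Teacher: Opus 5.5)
Your route is the paper's. You take a non--Dunford--Pettis $S : L_1 \to X$ and set $T = S\circ\iota$, with $\iota : L_{p'}\to L_1$ the formal inclusion. You get $\pi_p(T^*) \le \pi_p(\iota^*)\,\|S\| \le \|S\|$ from the ideal property, and then apply \Cref{no compact no fatou}. The paper does not prove that $T$ is non-compact; it cites Bourgain's Proposition 1 on Dunford--Pettis operators on $L_1$ for that step.

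\textbf{The gap.} Your by-hand non-compactness argument is not finished. The appeal to a ``Rosenthal/Kadec--Pe\l czy\'nski-type extraction'' and the claim that $\|S(h_n-h_m)\|$ stays bounded below are asserted, not proved, as you acknowledge yourself. No extraction is needed, because your choice $\eta := \|S\|\sup_n\|r_n\|_1 < c/2$ already closes the argument:
\begin{itemize}
\item Suppose $Sh_{n_k}\to y$ in norm along some subsequence.
\item Then $Sr_{n_k} = Sg_{n_k}-Sh_{n_k}\to -y$ weakly, so weak lower semicontinuity of the norm gives $\|y\|\le \liminf_k \|Sr_{n_k}\| \le \eta$.
\item On the other hand, $\|y\| = \lim_k \|Sh_{n_k}\| \ge c-\eta$.
\item Together these force $c \le 2\eta$, a contradiction.
\end{itemize}
Hence $S$ maps the bounded set $\{h_n\}\subset M\,\ball(L_\infty)\subset M\,\ball(L_{p'})$ onto a set that is not relatively compact, and $T$ is not compact.

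\textbf{Two smaller points.}
\begin{itemize}
\item For $p=1$, your $h_n$ lie in $L_\infty$, not in $C[0,1]$. If you want $Z = C[0,1]$ in order to get an SFDR and the countable version, use Lusin's theorem to replace $h_n$ by continuous $\tilde h_n$ with $\|\tilde h_n\|_\infty\le M$ and $\|h_n-\tilde h_n\|_1\to 0$. For the statement as posed, $Z=L_\infty$ with an NFDR already suffices.
\item Delete the exploratory opening on the adjoint. The claim that an operator from an $L_1$-space into $X^{**}$ is $2$-summing by Grothendieck's theorem is false: the identity of $\ell_1$ is not $2$-summing. Nothing later in your proof uses it.
\end{itemize}
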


\begin{proof}
   Find $\widetilde{T} \in B(L_1(0,1), X)$ which is not Dunford-Pettis, and let $T = \widetilde{T} \circ id$, where $id : L_{p'} (0,1) \to L_1(0,1)$ is the canonical embedding, and $1/p + 1/p' = 1$. Note that $\pi_p(T^*) \leq \| \widetilde{T}\|$. Moreover, by \cite[Proposition 1]{bourgain-DP}, $T$ fails to be compact. An appeal to \Cref{no compact no fatou} completes the proof.
\end{proof}

\begin{corollary}\label{c0 or L1}
 If $X$ contains either $c_0$ or $L_1(0,1)$, and $1 \leq p < \infty$, then each of the lattices $\FBLp[X]$, $\Ip[X]$, and $\Hp[X]$ fails the $\lambda$-Fatou property, for any $\lambda > 0$.
\end{corollary}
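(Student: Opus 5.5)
The plan is to reduce the statement to \Cref{no CCP}, by showing that a space containing $c_0$ or $L_1(0,1)$ fails the CCP. Failure of the CCP passes from a subspace to the whole space. Indeed, if $X_1 \subset X$ and $S : L_1 \to X_1$ is not Dunford--Pettis, then composing $S$ with the inclusion $X_1 \hookrightarrow X$ gives an operator $L_1 \to X$ that is still not Dunford--Pettis. The reason is that the inclusion is an isomorphic embedding, so it preserves norms of images up to a constant, and weakly null sequences in $L_1$ are still sent to non-norm-null sequences. So it suffices to find non-Dunford--Pettis operators $L_1(0,1) \to c_0$ and $L_1(0,1) \to L_1(0,1)$.

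For $L_1(0,1)$, take the identity operator. The Rademacher functions $(r_n)$ form a weakly null sequence in $L_1(0,1)$: by the Riemann--Lebesgue lemma they tend to $0$ against every $L_\infty$ function, since they are orthonormal in $L_2$ and $L_\infty$ is dense enough (one can use that $\int r_n g \to 0$ for $g \in L_2$ and approximate $g \in L_\infty \subset L_2$). On the other hand $\|r_n\|_1 = 1$. Hence the identity is not Dunford--Pettis.

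For $c_0$, define $S : L_1(0,1) \to c_0$ by $Sf = (\int f r_n)_n$. By Riemann--Lebesgue (as above, $\int f r_n \to 0$ for $f \in L_1$, using density of $L_2$ in $L_1$ and $|\int f r_n| \le \|f\|_1$), the image does lie in $c_0$, and $\|S\| \le 1$. Now take the weakly null sequence $(r_k)$. We have $Sr_k = e_k$ by orthonormality, and $\|e_k\| = 1$, so $S$ is not Dunford--Pettis.

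Combining these with \Cref{no CCP} completes the proof. There is no real obstacle here. The only points needing care are the weak nullity of the Rademacher sequence in $L_1$ and the fact that $S$ maps into $c_0$; both are standard Riemann--Lebesgue-type facts.
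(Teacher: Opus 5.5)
Your argument is correct and follows the paper's route: reduce to \Cref{no CCP} by showing that $X$ fails the CCP. The only difference is that the paper cites Talagrand for this failure, whereas you verify it directly via the identity on $L_1(0,1)$ and the Rademacher-coefficient map $L_1(0,1) \to c_0$ (neither sends $(r_k)$ to a norm-null sequence), together with the observation that failure of the CCP passes to superspaces.
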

\begin{proof}
    In both cases, $X$ fails the CCP (see, for example, \cite[p. 76]{Talagrand}).
\end{proof}

For $p=1$, \Cref{c0 or L1} was claimed in \cite{ABT}. However, for $X$ non-separable, that proof contains a gap: it relies on $\overline{i}(\FBL[Y])$ being a regular sublttice of $\FBL[X]$ whenever $i : Y \to X$ is an isometric embedding. \Cref{FBLp into Ip} below shows that this is not necessarily correct.

\begin{corollary}\label{duals and Hp}
    If $X$ be a Banach space and $p \in [1,\infty)$, then: 
    \begin{enumerate}
        \item $\Hp[X^*]$ is monotonically bounded if and only if $\Hp[X^*]$ has the $\lambda$-Fatou property for some $($equivalently, any$)$ $\lambda$ if and only if $X^*$ has the wRNP;
        \item $\Ip[X^*]$ has the Fatou property if and only if $X^*$ has the wRNP.
    \end{enumerate}
\end{corollary}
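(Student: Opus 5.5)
The plan is to prove the two implications separately. The positive direction comes from \Cref{wRNP_Hp}. The negative direction goes through the classical characterization of the wRNP in dual spaces together with \Cref{c0 or L1}. Throughout, the dual space in question is $X^*$, so \Cref{wRNP_Hp} applies to it directly.

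\emph{wRNP $\Rightarrow$ good properties.} If $X^*$ has the wRNP, then \Cref{wRNP_Hp} shows that $\Hp[X^*]$ is 1-monotonically bounded. By \Cref{connections}(1), it therefore has the $1$-Fatou property, hence the $\lambda$-Fatou property for every $\lambda \geq 1$.

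For $\Ip[X^*]$ I would transfer the Fatou property through the ideal structure. Recall that $\Ip[X^*]$ is the closure of an ideal of $\Hp[X^*]$, so it is a closed ideal there. Let $\mathcal S \subset \Ip[X^*]_+$ be upward directed, with supremum $z_0$ in $\Ip[X^*]$. By 1-monotonic boundedness of $\Hp[X^*]$, there is an upper bound $h \in \Hp[X^*]_+$ of $\mathcal S$ with $\|h\| \leq \sup_{z \in \mathcal S} \|z\|$. Since $0 \leq z_0 \wedge h \leq z_0$, the ideal property gives $z_0 \wedge h \in \Ip[X^*]$. This element is still an upper bound of $\mathcal S$, so minimality of $z_0$ yields $z_0 \leq z_0 \wedge h \leq h$. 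Consequently $\|z_0\| \leq \|h\| \leq \sup_{z \in \mathcal S}\|z\|$, which is the Fatou property.

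\emph{Failure of wRNP $\Rightarrow$ failure of all properties.} Suppose $X^*$ fails the wRNP. The step I expect to be the main obstacle is identifying the right structural consequence, and here I would rely on known results rather than prove anything new. By Talagrand's characterization (see \cite[Section 7.4]{Bour}), a dual space $X^*$ has the wRNP if and only if $X$ contains no copy of $\ell_1$. Hence $X \supset \ell_1$. By Pełczyński's theorem, $X^*$ then contains an isomorphic copy of $L_1(0,1)$.

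\Cref{c0 or L1} now shows that $\Hp[X^*]$ and $\Ip[X^*]$ fail the $\lambda$-Fatou property for every $\lambda$. Equivalently, one can argue via \Cref{no CCP}, since $X^*$ fails the CCP. In particular $\Ip[X^*]$ fails the Fatou property. As for $\Hp[X^*]$, it cannot be monotonically bounded: otherwise it would be $\lambda$-monotonically bounded for some $\lambda$ (by the proposition at the start of \Cref{m-b}), hence $\lambda$-Fatou by \Cref{connections}(1), a contradiction.

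Combining the two directions gives all the stated equivalences in (1) and (2).
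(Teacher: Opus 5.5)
Your proof is correct and follows the paper's own argument. The forward direction uses \Cref{wRNP_Hp} together with the ideal structure of $\Ip[X^*]$; the converse uses the chain ($X^*$ fails the wRNP) $\Rightarrow$ $X \supset \ell_1$ $\Rightarrow$ $X^* \supset L_1(0,1)$, followed by \Cref{c0 or L1}. You only spell out steps the paper leaves implicit (why the Fatou property passes to the ideal, and why failing every $\lambda$-Fatou property rules out monotonic boundedness), and the paper credits the two classical facts to Musia\l{} and Hagler rather than to Talagrand and Pe\l{}czy\'nski.
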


\begin{proof}
    If $X^*$ has the wRNP, then, by \Cref{wRNP_Hp}, $\Hp[X^*]$ is 1-monotonically bounded and therefore has the Fatou property. Since $\Ip[X^*]$ is an ideal in $\Hp[X^*]$, it follows that $\Ip[X^*]$ has the Fatou property as well. 
    
    Conversely, suppose $X^*$ fails the wRNP. Then $X$ contains a copy of $\ell_1$ (see \cite{Musial}), and hence $X^*$ contains a copy of $L_1(0, 1)$ (see \cite{Hag73}).  \Cref{c0 or L1} shows that neither $\Hp[X^*]$ nor $\Ip[X^*]$ has the $\lambda$-Fatou property, for any $\lambda$.
\end{proof}

Specializing to Banach lattices, we obtain:

\begin{corollary}\label{dual of sep lattice}
For a Banach lattice $Y$, the following are equivalent:
\begin{enumerate}
 \item $Y^*$ has the RNP.
 \item $Y^*$ has the wRNP.
 \item For some $p \in [1,\infty)$ and $\lambda \geq 1$, $\FBLp[Y^*]$ has the $\lambda$-Fatou property.
 \item For any $p \in [1,\infty)$ there exists $\lambda \geq 1$ so that $\FBLp[Y^*]$ has the $\lambda$-Fatou property.
 \item For any $p \in [1,\infty)$, $\FBLp[Y^*]$ is 1-monotonically bounded.
\end{enumerate} Furthermore, if $Y$ is separable, then each of the above is equivalent to $Y^*$ being separable.
\end{corollary}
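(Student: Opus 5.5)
We will prove the cycle $(1)\Rightarrow(5)\Rightarrow(4)\Rightarrow(3)\Rightarrow(2)\Rightarrow(1)$, and then handle the separable addendum.

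\emph{$(1)\Rightarrow(5)$.} This is immediate from \Cref{dual-with-RNP}. Indeed, $Y^*$ is a dual Banach space with the RNP, so for every $p\in[1,\infty)$ the lattice $\FBLp[Y^*]$ is 1-monotonically bounded.

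\emph{$(5)\Rightarrow(4)\Rightarrow(3)$.} By \Cref{connections}(1), 1-monotonic boundedness implies the Fatou property, so we may take $\lambda=1$ in (4). The step from (4) to (3) is trivial.

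\emph{$(3)\Rightarrow(2)$.} We argue by contraposition. Suppose $Y^*$ fails the wRNP. As in the proof of \Cref{duals and Hp}, Musia{\l}'s theorem gives $\ell_1\hookrightarrow Y$, and then Hagler's result gives $L_1(0,1)\hookrightarrow Y^*$. \Cref{c0 or L1} now shows that $\FBLp[Y^*]$ fails the $\lambda$-Fatou property for every $p\in[1,\infty)$ and every $\lambda$, which contradicts (3).

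\emph{$(2)\Rightarrow(1)$.} This is the only step that uses the lattice structure, and it is the main point. We plan to reduce it to the known equivalence of RNP and wRNP for Banach lattices (\cite{GS}, as used in \Cref{lattice case}). Since $Y$ is a Banach lattice, $Y^*$ is a (dual) Banach lattice. The cleanest route is through $\ell_1$. If $Y^*$ fails the RNP, we expect a lattice argument to show that $Y^*$ contains $L_1$ or $\ell_1$-type structure forcing $\ell_1\hookrightarrow Y$. Alternatively, by the Stegall/Uhl-type characterization, $Y^*$ has the RNP iff every separable subspace of $Y$ has separable dual, and a Banach lattice with a nonseparable dual of a separable sublattice contains $\ell_1$ (via Lozanovskii/Meyer-Nieberg: $Y^*$ order continuous $\Leftrightarrow$ $\ell_1\not\hookrightarrow Y$ complementably, etc.).

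We would try first to cite directly that for a Banach lattice $E$ the wRNP and RNP coincide (Ghoussoub--Saab \cite{GS}), applied with $E=Y^*$. Then (2)$\Leftrightarrow$(1) is immediate. The obstacle is only verifying that the cited result applies to arbitrary (non-separable) lattices. If it does not, we fall back on the following chain: wRNP of $Y^*$ gives $\ell_1\not\hookrightarrow Y$ (by Musia{\l}), hence $Y^*$ is order continuous and $Y$ has no sublattice isomorphic to $\ell_1$. For Banach lattices this yields that every separable sublattice of $Y$ has separable dual, hence $Y^*$ has the RNP.

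\emph{Separable addendum.} Assume $Y$ is separable. By the Stegall--Uhl theorem, $Y^*$ has the RNP iff $Y^*$ is separable. The reverse direction, that separable duals have the RNP, is classical (\cite{DU}), and the forward direction is Stegall's theorem for separable preduals. This gives the equivalence of (1) with separability of $Y^*$, completing the proof.
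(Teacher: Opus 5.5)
Your proposal is correct and matches the paper's proof step for step: (1)$\Rightarrow$(5) from the dual-RNP corollary; the trivial chain (5)$\Rightarrow$(4)$\Rightarrow$(3); $\lnot$(2)$\Rightarrow\lnot$(3) via Musia{\l}, Hagler and the $c_0$/$L_1$ corollary; (1)$\Leftrightarrow$(2) by Ghoussoub--Saab, which the paper applies to arbitrary Banach lattices, so your hedge is unnecessary; and, for the separable addendum, Stegall together with the RNP of separable duals. You should, however, discard your fallback chain: order continuity of $Y^*$ (equivalently, no lattice copy of $\ell_1$ in $Y$) does not make duals of separable sublattices separable, as $Y=C[0,1]$ shows, so the only usable input is the absence of a linear copy of $\ell_1$ in $Y$, and deducing the RNP of $Y^*$ from that is essentially the Ghoussoub--Saab theorem itself.
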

\begin{proof}
 $(1) \Leftrightarrow (2)$ holds for Banach lattices in general, see \cite{GS}. 
 $(5) \Rightarrow (4) \Rightarrow (3)$ is evident, and $(1) \Rightarrow (5)$ follows from \Cref{dual-with-RNP}. It remains to establish $\lnot (2) \Rightarrow \lnot (3)$. As discussed in the proof of the preceding corollary, $\lnot (2)$ implies that $L_1(0, 1)$ embeds into $Y^*$. The desired conclusion therefore follows from \Cref{c0 or L1}.
Finally, for a separable Banach space $Y$, the condition $(1)$ is known to be equivalent to the separability of $Y^*$ (combine Corollary 5.12 and Theorem 5.23 of \cite{BL}). 	
\end{proof}

We conclude this section by considering the case $p = \infty$.
\begin{proposition}
For a Banach space $X$, the following are equivalent:
\begin{enumerate}
    \item $X$ is infinite dimensional.
    \item For any $\lambda > 0$, $\FBL^{(\infty)}[X]$ fails the $\lambda$-Fatou property.
    \item For some $\lambda > 0$, $\FBL^{(\infty)}[X]$ fails the $\lambda$-Fatou property.
\end{enumerate}
\end{proposition}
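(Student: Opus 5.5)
The implications $(2)\Rightarrow(3)$ is trivial. For $(3)\Rightarrow(1)$ we show the contrapositive: if $X$ is finite dimensional then $\FBL^{(\infty)}[X]$ has the Fatou property. For the main implication $(1)\Rightarrow(2)$ we adapt the argument of \Cref{no compact no fatou}, with the identity operator $T=\id_X$ in place of $T$.

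For the finite-dimensional case, recall that $\FBL^{(\infty)}[X]$ coincides with the space of positively homogeneous weak$^*$-continuous functions on $\ball(X^*)$ with the sup norm. When $\dim X<\infty$ this is (lattice isometric to) $C(\sphere(X^*))$ with the sup norm. Suppose $\mathcal S\subset C(K)_+$ is upward directed with supremum $z_0$ in $C(K)$, and set $s=\sup_{z\in\mathcal S}\|z\|$. Then the constant function $s$ is an upper bound for $\mathcal S$, so $z_0\le s$ and $\|z_0\|\le s$. Restricting to the sphere, and using positive homogeneity to extend constants, the function $x^*\mapsto s\|x^*\|$ is an upper bound in the lattice. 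This gives the Fatou property with $\lambda=1$, so $(3)$ fails.

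For $(1)\Rightarrow(2)$, let $X$ be infinite dimensional and $\lambda>0$. Take $\phi(x^*)=\|x^*\|$. This is not a member of the lattice, since by \Cref{weak* continuity} it is not weak$^*$ continuous at $0$, the identity being non-compact. Use the NFDR given by the net $(Z_k)$ of all finite-dimensional subspaces of $X$, and set $\phi_k(x^*)=\|x^*|_{Z_k}\|$. Each $\phi_k$ is weak$^*$ continuous with $\|\phi_k\|_\infty\le 1$, and $\phi_k\nearrow\phi$ pointwise. Fix a unit vector $x_0$ and $\mu>\lambda$, and put $f_k=\phi_k\wedge\mu|\delta_{x_0}|$. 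Then $\|f_k\|\le1$ and $\|\mu|\delta_{x_0}|\|=\mu>\lambda$.

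Running the proof of \Cref{no compact no fatou} verbatim (it only used weak$^*$ continuity of the candidate upper bound $g$ on bounded sets, which holds here), we get that any upper bound $g$ of $(f_k)$ satisfies $g\ge\mu|\delta_{x_0}|$. The required weak$^*$-null net $(x^*_\alpha)$ with $\phi(x^*_\alpha)>c$ exists because, in infinite dimensions, $0$ lies in the weak$^*$ closure of the unit sphere of $X^*$. Consequently $\mu|\delta_{x_0}|=\vee_k f_k$ while $\sup_k\|f_k\|\le1<\mu/\lambda$, so the $\lambda$-Fatou property fails.

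The main obstacle is the following check. The argument of \Cref{no compact no fatou} evaluates $g$ at $x^*+Cx^*_\alpha$, which has norm up to $1+C$, rather than on the unit ball. This is harmless, since positive homogeneity and weak$^*$ continuity on bounded sets are all that is used. We also need $\phi_k$ to be weak$^*$ continuous on bounded sets, which holds because $Z_k$ is finite dimensional.
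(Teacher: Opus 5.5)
Your proposal is correct and follows the paper's proof: the finite-dimensional case reduces to the Fatou property of $C(\sphere(X^*))$, and $(1)\Rightarrow(2)$ is exactly \Cref{no compact no fatou} applied to the non-compact identity operator on $X$ (with $p=\infty$), which you have simply written out. As in the paper, your contrapositive for $(3)\Rightarrow(1)$ tacitly assumes $\lambda \geq 1$; for $\lambda<1$ every nonzero Banach lattice fails the $\lambda$-Fatou property (take a singleton $\mathcal{S}$), so the statement should be read with $\lambda\geq 1$.
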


\begin{proof}
Clearly, $(2) \Rightarrow (3)$. The implication $(3) \Rightarrow (1)$ follows from the fact that, if $\dim X < \infty$, then $\FBL^{(\infty)}[X]$ can be identified with $C(\sphere(X^*))$, which has the Fatou property. For $(1) \Rightarrow (2)$, it suffices to consider the identity operator on $X$ and apply \Cref{no compact no fatou}.
\end{proof}

\section{Failure of Fatou property via dual $p$-persistence: the case of the (pre)dual of the James Tree space}\label{no m-b}\label{dual persistence}\label{james tree}

Consider the dual, and the canonical predual, of the \emph{James Tree space} $JT$. We briefly outline the construction here, and refer the reader to \cite{LiSt} or \cite[Section 15.4]{AK} for more detail.

$JT$ is defined as the completion of all finitely supported double indexed sequences $(a_{ni})_{n \geq 0, 1 \leq i \leq 2^n}$, arranged as a tree in the canonical way -- namely, $(n+1,2i-1)$ and $(n+1,2i)$ are direct descendants of $(n,i)$. The norm of $a = (a_{ni})$ is
\begin{equation}
	\|a\| = \big( \sup \sum_k \big| \sum_{(n,i) \in S_k} a_{ni} \big|^2 \big)^{1/2} ,
	\label{define:JT}
\end{equation}
where the supremum runs over all finite collections of disjoint segments $(S_k)$. 
The canonical normalized basis of $JT$, which we denote by $(e_{ni}^*)$, is boundedly complete (see e.g.~\cite[Proposition 15.4.4, Remark 15.4.5]{AK}). Denoting the biorthogonal functionals by $(e_{ni})$, we see that $JT$ is the dual of their span, which we denote by $JT_*$. We refer the reader to \cite{LiSt} for more information on $JT_*$.

\begin{proposition}\label{JT*-Fatou}
For $p \in [2,\infty)$, both $\FBLp[JT_*]$ and $\FBLp[JT^*]$ fail the $\lambda$-Fatou property, for any $\lambda > 0$; also, both $\Ip[JT_*]$ and $\Ip[JT^*]$ fail to be monotonically bounded.
\end{proposition}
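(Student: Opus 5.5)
The plan is to apply \Cref{failure of fatou}: for $X = JT_*$ and for $X = JT^*$ we exhibit $Z$ and a dual $p$-persistent $T \in B(Z,X)$ with $T^* \in \Pi_p(X^*,Z^*)$. Since $T^*$ has to be $p$-summing, $T$ will be taken compact, with $p$-persistence forced by a non-separability or ``branch'' phenomenon rather than by non-compactness. The natural source is the tree structure. For each branch $\beta$ of the dyadic tree, the functional ``sum along $\beta$'' is a norm-one element of $JT^{**}$. The branch functionals $(b_\beta)$ span an isomorphic copy of $\ell_2(\Gamma)$ in $JT^{**}/JT$, with $\Gamma$ the continuum of branches. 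Since $p \geq 2$, operators through $\ell_2$ are the right candidates: for $p\ge 2$, any operator $\ell_1 \to \ell_2$ (more generally, from an $L_1$-like space) is $2$-summing, hence $p$-summing, by Grothendieck's theorem.

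For $X = JT_*$ (so $X^* = JT$), I would take $Z = \ell_1$ and define $T : \ell_1 \to JT_*$ through a sequence in $JT_*$. The intended choice of $T^* : JT \to \ell_\infty$ is a map factoring through the quotient $JT \to \ell_2$ obtained from limits along branches. Concretely, fix countably many disjoint branches $\beta_k$, or rather pairwise disjoint segments $S_k$ lying on distinct branches. Put $T e_k = c_k \sum_{(n,i) \in S_k} e_{ni}$, where $c_k$ is normalized and tends to $0$ so that $T$ is compact. Then $T^* a = (c_k \sum_{S_k} a_{ni})_k$. By the definition \eqref{define:JT} of the norm, the map $a \mapsto (\sum_{S_k} a_{ni})_k$ is a contraction $JT \to \ell_2$, so $T^*$ factors as $JT \to \ell_2 \xrightarrow{\mathrm{diag}(c_k)} \ell_\infty$. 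To make $T^*$ $p$-summing, I would take $(c_k)\in \ell_2$ so that the diagonal map is Hilbert--Schmidt, hence $2$-summing. Composing gives $\pi_2(T^*) \le \|c\|_2$, and $\pi_p \le \pi_2$ for $p \geq 2$. Compactness of $T$ is then automatic.

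The main obstacle is dual $p$-persistence: we need $\pi_p(T^*|_{\cap_i x_i^\perp}) > c$ for finitely many $x_i$ in a dense set $X_0$. I would take $X_0$ to be the finitely supported vectors in $JT_*$. Their annihilator contains all $a \in JT$ supported below some level $N$. The segments $S_k$ as chosen above will not work, since restricting to deep levels kills all but a tail and persistence fails. I would therefore replace the countable family by a \emph{tree-indexed} one, with $Z = \ell_1(\text{nodes})$. At each node we place the segment from that node down a fixed depth $d$, weighted by $c_{n} \approx 2^{-n/2} n^{-1}$-type coefficients. The aim is that below every level $N$ there remain $2^N$-many disjoint subtrees, each contributing a uniform amount to $\pi_p$. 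Uniformity would be verified by testing on vectors $a^{(j)}$ supported in disjoint subtrees below level $N$, for which $\|(a^{(j)})\|_{p,\textrm{weak}}$ in $JT$ is controlled by the $\ell_2$-type segment structure, and which makes $\sum_j \|T^*a^{(j)}\|^p$ bounded below. Balancing the weights so that $T^*$ stays $p$-summing while persistence holds is the delicate computation. If this balance fails, I would exploit instead the uncountably many branches, using an NFDR, and the nonseparability of $JT^{**}/JT \cong \ell_2(\Gamma)$.

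For $X = JT^*$ I would reduce to the previous case. Since $JT_*\subset JT^*$ canonically and $JT^{*} = JT_* \oplus (JT^{**}/JT)^*$ up to isomorphism, with the second summand $\ell_2(\Gamma)$, I would compose the operator above with the inclusion $JT_* \hookrightarrow JT^*$. The adjoint becomes $T^*$ precomposed with the restriction $JT^{**}\to JT$, which is still $p$-summing. Persistence relative to a dense $X_0 \subset JT^*$ should transfer, because finitely many functionals from $JT^*$ restrict to finitely many elements of $JT_*^{**}$, and these can be approximated on the relevant finite-dimensional pieces. \Cref{failure of fatou} then yields both conclusions for both spaces.
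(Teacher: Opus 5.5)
\textbf{There is a genuine gap.} The central step is never carried out: you do not produce an operator that is both dual $p$-persistent and has $p$-summing adjoint. Moreover, the tree-indexed family you propose cannot work for \emph{any} choice of weights.

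Let $Te_\gamma = c_{n(\gamma)} f_\gamma$, where $f_\gamma = \sum_{(m,j)\in S_\gamma} e_{mj}$ and $S_\gamma$ is the depth-$d$ segment starting at the node $\gamma$ of level $n(\gamma)$. For a node $(m,j)$ put $k_{mj} = e^*_{mj} - e^*_{m+1,2j-1} - e^*_{m+1,2j} \in JT$. The coefficient of $\sum_{m \le M} \alpha_{mj} k_{mj}$ at a node $w$ is $\alpha_w - \alpha_{\mathrm{par}(w)}$. So its sum over a segment with top $u$ and bottom $v$ telescopes to $\alpha_v - \alpha_{\mathrm{par}(u)}$. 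Disjoint segments have distinct bottoms, and at most two tops share a parent. The definition of the $JT$ norm therefore gives $\|\sum \alpha_{mj} k_{mj}\| \le \sqrt{6}\,\|\alpha\|_2 \le \sqrt{6}\,\|\alpha\|_{p'}$, i.e.\ $\|(k_{mj})_{m \le M}\|_{p,\mathrm{weak}} \le \sqrt{6}$ for every $M$.

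If $\gamma'$ is a child of $(m,j)$, then $S_{\gamma'}$ meets the support of $k_{mj}$ only at $\gamma'$, so $\|T^* k_{mj}\|_\infty \ge c_{m+1}$. Hence $\pi_p(T^*) < \infty$ forces $\sum_n 2^n c_n^p < \infty$. Now take $F = \spn\{e_{mj} : m < N\} \subset X_0$. Only the coordinates with $n(\gamma) \ge N-d$ survive on $F^\perp$, and the trivial Pietsch estimate (each $f_\gamma$ has norm at most $1$) gives $\pi_p(T^*|_{F^\perp})^p \le \sum_{n \ge N-d} 2^n c_n^p \to 0$. So persistence fails. Your explicit choice $c_n \approx 2^{-n/2} n^{-1}$ is already in this summable regime. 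The fallback via ``uncountably many branches'' and an NFDR does not specify an operator at all.

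\textbf{The missing idea} is to make $T^*$ $p$-summing for a global reason, not a coordinatewise one. The paper uses the branch-sum contraction $R : JT \to C(\Delta)$, $e^*_{ni} \mapsto 1_{C_{ni}}$, so that $(Ra)(\sigma)$ is the sum of $a$ along the branch $\sigma$. It sets $\widetilde{T} = R^*|_{L_1(\mu)}$ and $T = \widetilde{T} \circ id$, with $id : L_{p'}(\mu) \to L_1(\mu)$.
\begin{itemize}
\item $T^*$ is $R$ followed by the formal inclusion $C(\Delta) \to L_p(\mu)$, so $\pi_p(T^*) \le 1$.
\item $\|T^* e^*_{st}\|_p = 2^{-s/p}$, and the level-$s$ unit vectors have weak $p$-norm at most $1$, because their span is isometric to $\ell_2^{2^s}$ and $p' \le 2$.
\item Thus every deep level contributes $\sum_t \|T^* e^*_{st}\|^p = 1$, yet these contributions do not add up in $\pi_p(T^*)$.
\end{itemize}
Note that $R k_{mj} = 0$, which is exactly how this operator escapes the obstruction above. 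That $\widetilde{T}$ lands in $JT_*$, and not merely in $JT^*$, is a genuine theorem of Lindenstrauss--Stegall: the vectors $R^*(2^n 1_{C_{ni}})$ are infinitely supported.

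\textbf{The $JT^*$ reduction} is also not an argument. $JT_*$ is not dense in $JT^*$, so the dense set must contain functionals outside $JT_*$, and the test vectors must annihilate them. The paper takes $X_0$ spanned by finitely supported elements of $JT_*$ together with the branch functionals $s_b$, which is dense by \cite{AGM}. It then uses the level-$s$ nodes lying off the finitely many branches $b_1,\dots,b_K$; there are still at least $2^{s-1}$ of them.
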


\begin{proof}
Let $\mu$ be the canonical probablity measure on the Cantor set $\Delta = \{0,1\}^{\N}$ -- that is, $\mu = \nu^{\otimes \N}$, where $\nu$ is the measure on $\{0,1\}$, assigning $1/2$ to each of the two points.
We shall find $\widetilde{T} \in B(L_1(\mu), JT_*)$ and define $T = \widetilde{T} \circ id$, where $id : L_{p'}(\mu) \to L_1(\mu)$. We shall then show that both $T$ and $\kappa T$ are dual $p$-persistent, where $\kappa : JT_* \to JT^*$ is the canonical embedding. Since $T^*$ is clearly $p$-summing, \Cref{failure of fatou} completes the proof.

Consider $R : JT \to C(\Delta) : e_{ni}^* \mapsto h_{ni}$, where $h_{ni}$ is a ``Haar function''.
More specifically, consider the branch leading from the root $(0,1)$ to $(n,i)$, consisting of pairs $(k-1,j_k)$, with $1 \leq k \leq n$ and $j_k = 2j_{k-1} - 1 + a_k$, with $a_k \in \{0,1\}$. For $\sigma = (\sigma_k)_{k \in \N} \in \Delta = \{0,1\}^\N$, let $h_{ni}(\sigma) = 1$ if $\sigma_k = a_k$ for $1 \leq k \leq n$; otherwise, set $h_{ni}(s\sigma) = 0$. This can also be re-stated in terms of the binary expansion of $i-1$.
	
By \cite[p.~92]{LiSt}, $R$ is contractive. Let $\widetilde{T} = R^*\big|_{L_1(\mu)}$, and $x_{ni} = R^* h_{ni}^*$, where $h_{ni}^* = 2^n h_{ni} \in L_1(\mu) \subset C(\Delta)^*$.
Then $\langle x_{ni} , e^*_{st} \rangle = 2^n \langle h_{ni} , h_{st} \rangle$ equals to:
	\begin{itemize}
		\item $1$ if $(s,t)$ is a predecessor of $(n,i)$, or $(n,i)$ itself.
		\item $2^{n-s}$ if $(s,t)$ is a successor of $(n,i)$.
		\item $0$ if $(n,i)$ and $(s,t)$ are not on the same branch.
	\end{itemize}
By \cite[Theorem 1]{LiSt} (invoked on p.~92 of that paper, where the notation $y^*_{ni}$ is used), $x_{ni} \in JT_*$; so $\widetilde{T}$ takes values in $JT_*$.
We therefore define $T = \widetilde{T} \circ id$, which coincides with the astriction of $R^* \circ id$ to $JT_*$ ($\kappa T = R^* \circ id$). From the definition of $\widetilde{T}$, 
$$
\langle h_{ni}^* , \widetilde{T}^* e^*_{st} \rangle = \langle h_{ni}^* , h_{st} \rangle \, \, {\textrm{ for \,  any }}  \, \, (n,i) , (s,t) ,
$$
hence $\widetilde{T}^* e^*_{st} = h_{st} \in C(\Delta) \subset L_\infty(\mu)$, and therefore, $T^* e^*_{st} = h_{st} \in L_p(\mu)$. For future use, note that $\|T^* e^*_{st}\| = 2^{-s/p}$.
We shall show the dual $p$-persistence (with constant $1/2$) of $\kappa T : L_1(\mu) \to JT^*$; the statement for $T$ will immediately follow. 

Denote by $B$ the set of branches of the James tree (starting with the ``root''). For $b \in B$ and $x^* = (\alpha_{ni}) \in JT$, let $s_b(x^*) = \sum_{(n,i) \in b} \alpha_{ni}$. Note that this sum converges: if not, then the branch $b$ would contain consecutive intervals $I_1, I_2, \ldots$ so that $\inf_k \big| \sum_{(n,i) \in I_k} \alpha_{ni} \big| > 0$, which gives $\|x^*\| = \infty$. In fact, $s_b$ is a norm one linear functional on $JT$.

By \cite{AGM}, $\spn[JT_* , s_b : b \in B]$ is dense in $JT^*$. To establish the dual $p$-persistence (with $X = JT^*$), take $X_0$ to be the linear span of $\{s_b : b \in B\}$, and of finitely supported elements of $JT_*$.
For $F \subset JT^*$, spanned by finitely supported $y_1, ..., y_M \in JT_*$ and $s_{b_1}, ..., s_{b_K}$ ($b_1, \ldots, b_k \in B$), we need to find $x_1^*, \ldots, x_J^* \in F^\perp$ so that $\|(x_j^*)\|_{p,\textrm{weak}} \leq 1$, yet $\sum_j \|T^* x_j^*\|^p \geq 2^{-p}$.
To this end, find $s > 2 \log K$ so that $\langle e_{st}^*, y_i \rangle = 0$ for every $i \in \{1, \ldots, M\}$ and $t \in \{1, \ldots, 2^s\}$.
Let $x_1^*, \ldots, x_J^*$ be an enumeration of those $e_{st}^*$ which lie outside of the branches $b_1, \ldots, b_K$, then $J \geq 2^{s-1}$.
As no two nodes $(s,t)$ (for fixed $s$) can belong to the same segment, we have
$\| \sum_t \alpha_t e^*_{st} \big\| = \big( \sum_t |\alpha_t|^2 \big)^{1/2}$.
Consequently, if $\sum_j |\beta_j|^{p'} \leq 1$, then
$$
\| \sum_j \beta_j x_j^* \| = \big( \sum_j |\beta_j|^2 \big)^{1/2} \leq 1 
$$
(here we rely on $p' \leq 2$), hence $\| (x_j^* )\|_{p, {\textrm{weak}}} = 1$. On the other hand, $\|T^* x_j^*\| = 2^{-s/p}$, hence $\sum_j \|T^* x_j^*\|^p = J 2^{-s} \geq 1/2$, since $J \geq 2^{s-1}$.
\end{proof}

\begin{corollary}
 If $p \in [2, \infty)$ and $n \geq 1$ is odd, then $\FBLp[JT^{(n)}]$ fails the $\lambda$-Fatou property, for any $\lambda > 0$.
\end{corollary}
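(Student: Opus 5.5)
The plan is to reduce to \Cref{failure of fatou}, using the operator $T : L_{p'}(\mu) \to JT_*$ built in the proof of \Cref{JT*-Fatou} and pushing it up the chain of odd duals. Write $JT^{(1)} = JT^*$. For odd $n \geq 1$, the canonical embedding $\kappa_n : JT_* \to JT^{(n)}$ is the composition of the embeddings $JT_* \to JT^* \to JT^{***} \to \cdots$, and we set $T_n = \kappa_n T \in B(L_{p'}(\mu), JT^{(n)})$. The case $n=1$ is already \Cref{JT*-Fatou}, so the content is the inductive step from $n$ to $n+2$.

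First, $T_n^*$ is $p$-summing. Indeed $T_n^* = T^* \kappa_n^*$, and $\kappa_n^*$ is a contraction, so $\pi_p(T_n^*) \leq \pi_p(T^*) < \infty$.

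The main work is dual $p$-persistence of $T_n$, and I would use the projection structure of odd duals. The space $JT^{(n)}$ decomposes as $JT^{(n-2)} \oplus (JT^{(n-3)})^\perp$, where $P = \kappa^{(n-2)*}$-type restriction maps give the canonical projection onto $JT^{(n-2)}$. Concretely, $JT^{(n+2)} = \kappa(JT^{(n)}) \oplus (JT^{(n-1)}$ embedded$)^\perp$ with $\kappa : JT^{(n)} \to JT^{(n+2)}$ canonical. Take as dense set $X_0^{(n+2)} = X_0^{(n)} + \big(\kappa(JT^{(n-1)})\big)^\perp$, where $X_0^{(n)}$ is the dense set witnessing persistence at level $n$. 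This is dense because $X_0^{(n)}$ is dense in the first summand.

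Now let $x_1, \ldots, x_m \in X_0^{(n+2)}$, and write $x_i = \kappa u_i + v_i$ with $u_i \in X_0^{(n)}$ and $v_i$ annihilating $JT^{(n-1)} \subset JT^{(n+1)}$. Apply persistence at level $n$ to $u_1, \ldots, u_m$. This produces $x_j^* \in JT^{(n+1)}$ in $\cap u_i^\perp$ with $\|(x_j^*)\|_{p,\textrm{weak}} \leq 1$ and $\sum \|T_n^* x_j^*\|^p > c$. We may take these $x_j^*$ inside $JT^{(n-1)}$: at level $1$ they were taken as $e^*_{st} \in JT$, and inductively they remain in $JT = JT^{(0)} \subset JT^{(n-1)}$.

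Next view the $x_j^*$ in $JT^{(n+1)} \supset JT^{(n-1)}$ via the canonical embedding, which is isometric, so the weak-$p$ norm is preserved. Since $v_i$ kills $JT^{(n-1)}$, we get $\langle x_i, x_j^*\rangle = \langle u_i, x_j^* \rangle = 0$. Also $T_{n+2}^* x_j^* = T^* x_j^*$ by compatibility of the canonical embeddings, so the same constant $c = 1/2$ works.

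A cleaner route, which I would write up, is to verify directly that the witnesses $e^*_{st} \in JT$ work at every odd level. Take $X_0 = X_0^{(1)} \oplus \bigoplus_k$ (annihilators) inductively, and then only the pairing $\langle \kappa u + v, e^*_{st}\rangle = \langle u, e^*_{st}\rangle$ is needed. With persistence in hand, \Cref{failure of fatou} gives the failure of the $\lambda$-Fatou property for every $\lambda$.

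The main obstacle is making the density and decomposition of $JT^{(n)}$ precise, i.e.\ that $JT^{(n+2)} = \kappa(JT^{(n)}) + (JT^{(n-1)})^{\perp}$ with $JT^{(n-1)}$ canonically embedded. This is the standard Dixmier decomposition $Y^{***} = Y^* \oplus Y^\perp$, applied with $Y = JT^{(n-1)}$.
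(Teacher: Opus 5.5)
Your argument is correct, but it takes a different route from the paper.

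\textbf{The paper's route.} The paper inducts on $m$, with $n = 2m-1$, and uses a structural fact. Being a dual space, $JT^{(2m-1)}$ is contractively complemented in its bidual $JT^{(2m+1)}$. So by \cite{OiTTT}, $\FBLp[JT^{(2m-1)}]$ embeds lattice isometrically into $\FBLp[JT^{(2m+1)}]$ as a regular sublattice. The $\lambda$-Fatou property passes to closed regular sublattices, so only \Cref{JT*-Fatou} is needed as the base case.

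\textbf{Your route.} You re-verify the hypotheses of \Cref{failure of fatou} for $\kappa_n T$ at every odd level. The Dixmier decomposition $JT^{(n+2)} = \kappa(JT^{(n)}) \oplus (\kappa JT^{(n-1)})^\perp$ supplies the dense set, and you transport the same witnesses $e^*_{st}$ upward. The identities $\langle \kappa \xi, x\rangle = \langle x, \xi\rangle$ and $\kappa_{n+2}^* \kappa \xi = \kappa_n^* \xi$ (for $\xi \in JT^{(n+1)}$) make this work with the same constant. One point of bookkeeping should be stated explicitly: the witnesses at level $n+2$ are $\kappa(x_j^*) \in JT^{(n+3)}$, not the $x_j^* \in JT^{(n+1)}$ themselves. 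Your text blurs this, although the pairing $\langle x_i, x_j^*\rangle$ you compute is the right one.

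\textbf{What each route buys.} The paper's argument is much shorter. However, it relies on the complementation and regular-sublattice result of \cite{OiTTT}, and as written it says something only about $\FBLp$. Yours is more hands-on but gives more:
\begin{itemize}
\item \Cref{failure of fatou} also shows that $\Ip[JT^{(n)}]$ is not monotonically bounded.
\item Since $\kappa_n T$ is compact (because $T$ is, by \Cref{bourgain}), \Cref{Ip not Hp general} yields $\Hp[JT^{(n)}] \supsetneq \Ip[JT^{(n)}] \supsetneq \nip[JT^{(n)}]$ for every odd $n$. This extends \Cref{Ip not HP}(2).
\end{itemize}
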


\begin{proof}
 Write $n = 2m-1$, and proceed by induction on $m$. \Cref{JT*-Fatou} gives us the base case of $m = 1$. For the inductive step, recall that, being a dual space, $JT^{(2m - 1)}$ is contractively complemented in $JT^{(2m + 1)}$. 
 Therefore, by \cite{OiTTT} (see also \cite{OiTTT-cor}), $\FBLp[JT^{(2m - 1)}]$ embeds lattice isometrically into $\FBLp[JT^{(2m + 1)}]$, as a regular sublattice. Clearly, the $\lambda$-Fatou property passes down to closed regular sublattices.
\end{proof}

\begin{remark}
    Since $JT^*$ has the wRNP (because $JT$ does not contain a copy of $\ell_1$), $\Ip[JT^*]$ has the Fatou property by \Cref{duals and Hp}. However, $\Ip[JT^*]$ is not monotonically bounded, by \Cref{JT*-Fatou}.
\end{remark}

\begin{remark}\label{bourgain}
It is known (see, for example, \cite[Chapter 7]{Talagrand}) that the wRNP implies the CCP, so the operator $\widetilde{T}: L_1 \to JT_*$ constructed above is Dunford-Pettis. Since $id: L_{p'} \to L_1$ is weakly compact, $T = \widetilde{T} \circ id : L_{p'} \to JT_*$ is compact. For this reason, the results of \Cref{operators:general} tell us nothing about $\Hp[JT_*]$.
\end{remark}

\section{Embedding of $\nip[X]$ into $\Hp[X]$}\label{FBLp vs Ip vs Hp}\label{Ip into Hp}
In \Cref{la-tr}, we showed that if $X$ is a dual Banach space with the RNP, and $1 \leq p < \infty$, then $\Hp[X] = \nip[X]$. This statement fails for general $X$.

\begin{theorem}\label{Ip not HP}
In the following cases, $\Hp[X] \supsetneq \Ip[X] \supsetneq \nip[X]$:
\begin{enumerate}
    \item $X$ contains $c_0$, $1 \leq p < \infty$.
    \item $X = JT_*$ or $X = JT^*$, $2 \leq p < \infty$.
    \item $X$ is a subspace of $L_1(0,1)$ which fails the RNP while possessing the Bounded Approximation Property $($BAP$)$, $p=1$.
\end{enumerate}
\end{theorem}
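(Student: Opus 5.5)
In all three cases I would reduce to \Cref{Ip not Hp general}. It is then enough to produce a Banach space $Z$ and an operator $T \in B(Z,X)$ with three properties: $T$ is compact, $T^* \in \Pi_p(X^*,Z^*)$, and $T$ is dual $p$-persistent relative to some dense $X_0 \subset X$.

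\emph{Case (2).} This one is essentially done. In the proof of \Cref{JT*-Fatou} we built $T = \widetilde{T}\circ id : L_{p'}(\mu) \to JT_*$ and showed three things:
\begin{itemize}
\item $T^*$ is $p$-summing, since it factors through $id^* : L_\infty \to L_p$;
\item both $T$ and $\kappa T$ are dual $p$-persistent;
\item $T$ is compact, by \Cref{bourgain}. Here wRNP gives CCP, so $\widetilde{T}$ is Dunford--Pettis, and composing with the weakly compact map $id$ yields a compact operator.
\end{itemize}
Then $\kappa T$ is compact as well, and \Cref{Ip not Hp general} applies to both $X = JT_*$ and $X = JT^*$.

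\emph{Case (1).} I would work inside a copy of $c_0 \subset X$ with unit vector basis $(e_n)$. Take $Z = \ell_{p'}$ and $T e_n = a_n e_n$, where $(a_n)$ is a null sequence, so that $T$ is compact. Then $T^*$ is a diagonal map from $X^*$ to $\ell_p$, acting as $x^* \mapsto (a_n \langle x^*, e_n\rangle)_n$. This map is $p$-summing as long as $(a_n) \in \ell_p$, because it factors through $X^* \to \ell_\infty \to \ell_p$ via a diagonal of norm $\|(a_n)\|_p$.

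Dual $p$-persistence is the obstacle. We need $\pi_p(T^*|_{F^\perp})$ bounded below for every finite $F \subset X_0$. My first attempt would be to take $X_0 = X$. I would try to find, for each finite $F$, functionals $x_j^* \in F^\perp$ whose restrictions to $c_0$ are close to blocks of the form $\pm e_j^*$ with $j \in$ some tail set, and which have $\|(x_j^*)\|_{p,\textrm{weak}} \le C$. The constraint $x^* \perp F$ cuts out only finitely many dimensions, so perturbations via Hahn--Banach should be manageable.

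With a fixed sequence $(a_n)$, however, the tail contributions $\sum |a_j|^p$ go to zero, and persistence fails. The remedy I would try is a block diagonal. Let $Z = (\sum_k \ell_{p'}^{N_k})_{p'}$, and let $T$ map the $k$th block into $\spn\{e_n : n \in B_k\}$ for disjoint blocks $B_k$. On that block $T$ should be $\epsilon_k$ times an operator whose adjoint has $\pi_p$ comparable to $1$, while $\epsilon_k \to 0$ keeps $T$ compact. I am not yet sure this can be done, because persistence needs $\pi_p$ of tail restrictions bounded below while $\pi_p(T^*)$ must stay finite. This tension is exactly the hard point.

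A cleaner alternative for case (1) avoids persistence altogether. By \Cref{no CCP} and \Cref{no compact no fatou}, $\Hp$, $\Ip$ and $\nip$ all fail Fatou, but Fatou failure does not by itself separate the spaces. So I would return to constructing a compact persistent $T$ by a direct gliding-hump choice of $x_j^*$ supported far out in $c_0$, using the $\ell_\infty$-structure: many disjoint $e_j^*$ have weak $p$-norm about $N^{1/p}$ rather than $1$, so the weights have to be balanced against that.

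\emph{Case (3), with $p = 1$.} Here $X \subset L_1$ fails the RNP, so there is a non-representable operator $L_1 \to X$. I would compose it with the inclusion $L_\infty \to L_1$, or with $C(K) \to L_1$, to get $T$. Then $T^*$ maps into $L_\infty^*$-type spaces and is $1$-summing. Compactness of $T$ would come from the CCP of $L_1$-subspaces, which have the Schur-like property on $L_\infty$-images. I would use the BAP to supply the dense set $X_0$ and the finite-rank approximations needed to verify dual $1$-persistence, mimicking the Haar-function computation used for $JT$.
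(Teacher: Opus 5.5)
Your case (2) is correct and is exactly the paper's argument: the operator from \Cref{JT*-Fatou} is compact by \Cref{bourgain}, hence so is $\kappa T$. Cases (1) and (3), however, are not proved. In (1) the decisive step is missing, and in (3) it is wrongly justified.

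\textbf{Case (1).} You identify the tension but do not resolve it. As written, your block sketch also has the normalization backwards. If the $k$th block is $\epsilon_k$ times an operator whose adjoint has $\pi_p \approx 1$, then the blocks' $p$-summing norms are $\approx \epsilon_k \to 0$, which is the same tail decay that killed your diagonal attempt. You need the opposite: blocks with $\pi_p(T_k^*) \approx 1$ but $\|T_k\| \to 0$. Those are blocks on which $\pi_p(\cdot)/\|\cdot\|$ blows up, and whose $p$-summing norm survives restriction to finite-codimensional subspaces.

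The paper takes $T = jS$ with $S = \oplus_k \kappa_k q_k : (\oplus_k \ell_1^{2^k})_1 \to (\oplus_k \ell_\infty^k)_0 \cong c_0$. Here $q_k$ is a quotient map, so $q_k^*$ embeds $\ell_1^k$ isometrically into $\ell_\infty^{2^k}$, and $\kappa_k = \pi_p(I_{\ell_1^k})^{-1} \sim k^{-1/2}$ by \Cref{kappa-k}.
\begin{itemize}
\item Compactness comes from $\|\kappa_k q_k\| = \kappa_k \to 0$.
\item The bound $\pi_p(T^*) \leq 1$ uses that the domain is an $\ell_1$-sum. Then $S^*$ lands in an $\ell_\infty$-sum, and $\pi_p(S^*) = \sup_k \pi_p(\kappa_k q_k^*)$ by \Cref{p-summing norm}. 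With your $\ell_{p'}$-sum, $T^*$ lands in an $\ell_p$-sum and no such formula is available.
\item Persistence (\Cref{p-sum lower est}) uses a lifting of $j^*$. It also uses the fact (\cite{GTT}) that a subspace of $\ell_1^k$ of bounded codimension still contains a uniformly isomorphic $\ell_1^m$ with $m \geq k/c$. This gives $\pi_p(T^*|_E) \geq \kappa_k/(2c\kappa_m)$, which is bounded below.
\end{itemize}
A gliding hump with disjointly supported $e_j^*$ cannot replace this. As you observed yourself, such test families only see averages of the weights. The test families must be Hilbertian-type sequences inside the $\ell_1^k$ blocks.

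\textbf{Case (3), compactness.} Your compactness claim is false: subspaces of $L_1$ need not have the CCP. $L_1$ itself fails it (\Cref{c0 or L1}), and $X = L_1$ is admissible in (3). Its identity is non-representable, yet composed with $L_\infty \hookrightarrow L_1$ it is not compact (look at the Rademacher functions). Then $\|T^* \cdot\|$ is not even in $\Hp[X]$, by \Cref{weak* continuity}. The missing input is Bourgain's theorem \cite{bourgain-DP}: a subspace of $L_1$ failing the RNP admits a non-representable operator $\widetilde{T} : L_1 \to X$ that is Dunford--Pettis. Then $\widetilde{T} \circ id$ is compact.

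\textbf{Case (3), persistence.} Dual $1$-persistence is also not addressed, and there is no tree structure to imitate. The paper's mechanism is \Cref{1-summing}. Suppose $\pi_1((I - R_{n_k})T) < 2^{-k}/\lambda$ along a subsequence. Two facts apply:
\begin{itemize}
\item $\pi_1 = \iota_1$ for operators defined on $L_\infty$;
\item $\nu_1(Ru) \leq \|R\| \iota_1(u)$ for finite-rank $R$.
\end{itemize}
Hence the telescoping series $R_{n_1}T + \sum_k R_{n_{k+1}}(I - R_{n_k})T$ converges in $\nu_1$ to $T$. So $T$ is nuclear and $\widetilde{T}$ is representable, a contradiction. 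The resulting bound $\pi_1((I - R_n)T) \geq c_0$ is then discretized through some $\ell_1^N$. Via \Cref{compute nu1}, it yields functionals $x_j^* \in \ran(R_{n-1})^\perp$ with $\|(x_j^*)\|_{1,\textrm{weak}} \leq \lambda + 1$ and $\sum_j \|T^* x_j^*\| > c_0/2$.
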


Note that the spaces $X$ appearing in the three cases above are distinct. For instance, $JT^*$ has the wRNP, since its predual doesn't contain a copy of $\ell_1$. By \cite[Theorem 7.4.8]{Bour}, $c_0$ cannot be embedded into $JT^*$, \emph{a fortiori} into $JT_*$.

By \Cref{Ip not Hp general}, it suffices to find a compact dual $p$-persistent $T \in B(Z,X)$ with $T^* \in \Pi_p(X^*,Z^*)$. Such a $T$ is constructed on a case by case basis.

\begin{proof}[Proof of \Cref{Ip not HP}(2)]
The operator $T$ constructed in the proof of \Cref{JT*-Fatou} is dual $p$-persistent, and has $p$-summing adjoint. Moreover, it is compact, in light of \Cref{bourgain}.
\end{proof}

\subsection{Proof of \Cref{Ip not HP}(1)}
Build $T : \ell_1 \to X$ with the desired properties.

 It is easy to see (by tracking extreme points) that there exists a quotient map $q_k : \ell_1^{2^k} \to \ell_\infty^k$. Then $q_k^* : \ell_1^k \to \ell_\infty^{2^k}$ is an isometric embedding.
 Let ${\boldsymbol{\kappa}}_k = \big( \pi_p (q_k^*)\big)^{-1} = \big( \pi_p (I_{\ell_1^k})\big)^{-1}$. Let $S_k = {\boldsymbol{\kappa}}_k q_k$, then $\pi_p(S_k^*) = 1$.
 Represent $c_0 = (\oplus_k \ell_\infty^k)_0$ and $\ell_1 = (\oplus_k \ell_1^{2^k})_1$, and let $S = \oplus_k S_k : \ell_1 \to c_0$.

 \begin{lemma}\label{kappa-k}
 For $p \in [1,\infty)$, there exists $\gamma = \gamma_p$, so that the inequalities $\gamma^{-1} \leq \sqrt{k} {\boldsymbol{\kappa}}_k \leq \gamma$ hold for any $k \in \N$.
 \end{lemma}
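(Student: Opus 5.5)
We have to show that $\pi_p(I_{\ell_1^k})$ is comparable to $\sqrt{k}$, uniformly in $k$, with constants depending only on $p$. The proof splits into an upper bound and a lower bound.

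\emph{Upper bound.} We use that the summing norms are monotone in $p$: $\pi_p \leq \pi_1$. By Grothendieck's theorem, every operator from an $\ell_1$-space into a Hilbert space is $1$-summing with $\pi_1 \leq K_G \|\cdot\|$. Factor $I_{\ell_1^k}$ as
\[
I_{\ell_1^k} = j \circ u, \qquad u : \ell_1^k \to \ell_2^k \ \text{the formal identity}, \qquad j : \ell_2^k \to \ell_1^k \ \text{the formal identity}.
\]
Here $\|u\| = 1$ and $\|j\| = \sqrt{k}$. Hence
\[
\pi_p(I_{\ell_1^k}) \leq \pi_1(I_{\ell_1^k}) \leq \|j\| \, \pi_1(u) \leq K_G \sqrt{k}.
\]
This gives $\sqrt{k}\,\boldsymbol{\kappa}_k \geq K_G^{-1}$.

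\emph{Lower bound.} This is the step needing care, since we must handle all $p \in [1,\infty)$ simultaneously. For $p \leq 2$ we have $\pi_p \geq \pi_2$, so it suffices to bound $\pi_2(I_{\ell_1^k})$ from below. The identity on a $k$-dimensional space satisfies
\[
k = \pi_2(I_E)^2 \quad \text{for any } k\text{-dimensional } E,
\]
a standard fact from trace duality. Hence $\pi_2(I_{\ell_1^k}) = \sqrt{k}$.

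For $p > 2$ the monotonicity goes the wrong way, so we test on an explicit sequence instead. Take the unit vectors $e_1,\dots,e_k$ and look for a weakly $p$-summable family with small weak norm and large strong sum. The natural choice is the family of Rademacher-sign vectors
\[
x_\varepsilon = k^{-1}\sum_{i=1}^k \varepsilon_i e_i, \qquad \varepsilon \in \{\pm1\}^k,
\]
each of $\ell_1$-norm $1$. With weights $2^{-k/p}$, the weak-$p$ norm of the family $(2^{-k/p}x_\varepsilon)_\varepsilon$ is
\[
\sup_{\|y\|_{\ell_\infty^k}\le 1} \Big(2^{-k}\sum_\varepsilon |\langle y, x_\varepsilon\rangle|^p\Big)^{1/p}
= k^{-1} \sup_{\|y\|_{\ell_\infty^k}\le 1} \Big\|\sum_i \varepsilon_i y_i\Big\|_{L_p},
\]
and by Khintchine's inequality this is at most $B_p \sqrt{k}/k = B_p/\sqrt{k}$. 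The strong sum $\big(\sum_\varepsilon 2^{-k}\|x_\varepsilon\|^p\big)^{1/p}$ equals $1$. Therefore
\[
\pi_p(I_{\ell_1^k}) \geq \sqrt{k}/B_p.
\]
This Khintchine argument in fact works for every $p$, so it can replace the $p \leq 2$ case.

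Combining the two bounds, $\gamma = \max(K_G, B_p)$ works. This uses only the identity $\pi_p(q_k^*) = \pi_p(I_{\ell_1^k})$, which holds because $q_k^*$ is an isometric embedding and the summing norm is injective. The main obstacle is getting the lower bound uniformly in $p$; the Khintchine estimate resolves it.
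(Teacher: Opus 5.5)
Your proof is correct, but it takes a different route from the paper in both directions.

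\textbf{Upper bound.} The paper uses that the cotype $2$ constants of $\ell_1^k$ are uniformly bounded. Hence $\pi_1$ and $\pi_2$ are uniformly equivalent for operators on $\ell_1^k$, and it then uses $\pi_2(I_{\ell_1^k})=\sqrt{k}$. You instead factor $I_{\ell_1^k}$ through $\ell_2^k$ and apply Grothendieck's theorem to the formal identity $\ell_1^k\to\ell_2^k$. This is more direct and gives the explicit bound $\pi_p(I_{\ell_1^k})\le K_G\sqrt{k}$.

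\textbf{Lower bound.} The paper invokes the existence of a $2$-Euclidean subspace of $\ell_1^k$ of dimension proportional to $k$ (a Kashin/Figiel--Lindenstrauss--Milman type result). It combines this with the known estimate $\pi_p(I_{\ell_2^n})\ge c_p\sqrt{n}$. You test $\pi_p$ directly on the $2^k$ vectors $2^{-k/p}k^{-1}\sum_i\varepsilon_i e_i$. Their weak-$p$ norm is at most $B_p/\sqrt{k}$ by Khintchine's inequality, and their strong $p$-sum equals $1$. This gives $\pi_p(I_{\ell_1^k})\ge\sqrt{k}/B_p$ for every $p\in[1,\infty)$ at once, so your separate treatment of $p\le 2$ is redundant, as you note.

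\textbf{What each buys.} Your argument is fully elementary and self-contained. It avoids the deep theorem on Euclidean sections of proportional dimension and yields the explicit constant $\gamma_p=\max(K_G,B_p)$. The paper's argument is shorter given the cited machinery. It also presents the estimate as an instance of a general principle: a space containing large well-isomorphic Euclidean subspaces has a large $p$-summing identity norm.

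Your concern about handling the lower bound ``uniformly in $p$'' is not actually needed, since $\gamma$ is allowed to depend on $p$. Your argument also supplies this dependence correctly through $B_p$.
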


\begin{proof}
In \cite{Gar}, $p$-summing norms of diagonal operators are computed (up to a constant). For the reader's convenience, here we outline a proof of $\pi_p (I_{\ell_1^k}) \sim \sqrt{k}$.

For the lower estimate recall that $\ell_1^k$ contains a subspace $G$ with $\dim G \geq ck$ ($c$ is a universal constant), and $d_{BM}(G, \ell_2^k) \leq 2$ (here and below, $d_{BM}(\cdot , \cdot )$ stands for the Banach-Mazur distance). Then
$$
\pi_p (I_{\ell_1^k}) \geq \pi_p(I_G) \geq \frac12 \pi_p(I_{\ell_2^k}) \geq c_p \sqrt{k} ,
$$
where the last inequality utilizes \cite[Corollary 4.13]{DJT} ($c_p$ depends on $p$ only).

On the other hand, the cotype $2$ constants of the $\ell_1^k$ spaces are uniformly bounded, hence the $1$- and $2$-summing norms are equivalent for operators on $\ell_1^k$ \cite[Corollary 11.16]{DJT}. Now recall that $\pi_2(I_F) = \sqrt{\dim F}$ for any $F$.
\end{proof}
 
 \begin{lemma}\label{p-summing norm}
For Banach spaces $X_1, Y_1, X_2, Y_2, \ldots$, define $X = (\oplus_k X_k)_1, Y = (\oplus_k Y_k)_\infty$. 
Suppose $u_k \in B(X_k,Y_k)$ are given, and let $u = \oplus_k u_k \in B(X,Y)$. Then, for $p \in [1,\infty]$, $\pi_p(u) = \sup_k \pi_p(u_k)$.
 \end{lemma}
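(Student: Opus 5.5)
The plan is to prove the two inequalities $\pi_p(u) \geq \sup_k \pi_p(u_k)$ and $\pi_p(u) \leq \sup_k \pi_p(u_k)$ separately, working directly from the definition of the $p$-summing norm. For $p=\infty$ both sides are operator norms, and $\|u\| = \sup_k \|u_k\|$ is immediate, so I will assume $p < \infty$ from now on.

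For the lower bound, let $J_k : X_k \to X$ be the canonical isometric embedding and $P_k : Y \to Y_k$ the canonical contractive projection. Then $u_k = P_k u J_k$. The ideal property of $\Pi_p$ gives $\pi_p(u_k) \leq \|P_k\| \pi_p(u) \|J_k\| \leq \pi_p(u)$.

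For the upper bound, set $C = \sup_k \pi_p(u_k)$; we may assume $C < \infty$. Take $x_1, \ldots, x_n \in X$ with $\|(x_j)\|_{p,\textrm{weak}} \leq 1$ and write $x_j = (x_{jk})_k$. Since the norm of $Y$ is a supremum, we have $\|u x_j\| = \sup_k \|u_k x_{jk}\|$. Fix $\vr > 0$ and choose, for each $j$, an index $k(j)$ with $\|u x_j\| \leq (1+\vr) \|u_{k(j)} x_{j k(j)}\|$. Now group the indices $j$ according to the value of $k(j)$: let $A_k = \{ j : k(j) = k\}$. Then
$$
\sum_j \|u x_j\|^p \leq (1+\vr)^p \sum_k \sum_{j \in A_k} \|u_k x_{jk}\|^p \leq (1+\vr)^p C^p \sum_k \|(x_{jk})_{j \in A_k}\|_{p,\textrm{weak}}^p .
$$

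The key step, and the main point to verify, is the estimate $\sum_k \|(x_{jk})_{j \in A_k}\|_{p,\textrm{weak}}^p \leq \|(x_j)_j\|_{p,\textrm{weak}}^p$. I would use the dual description of the weak $p$-norm as $\sup\{ (\sum_j |\langle x^*, x_j\rangle|^p)^{1/p} : x^* \in \ball(X^*)\}$, together with $X^* = (\oplus_k X_k^*)_\infty$. For each $k$ with $A_k \neq \emptyset$, pick $x_k^* \in \ball(X_k^*)$ almost attaining $\|(x_{jk})_{j \in A_k}\|_{p,\textrm{weak}}$. Only finitely many $k$ occur, so $x^* = (x_k^*)_k$ is a well-defined element of $\ball(X^*)$, and $\langle x^*, x_j \rangle$ would pick up contributions from all coordinates. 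To get around this, I would apply the dual description coordinatewise instead. For each $j$ the functional $x_{k(j)}^*$ composed with the coordinate projection, call it $y^*$, is built from the block $x^*$ restricted to the $A_k$ pieces. The functionals $\langle x_k^*, x_{jk}\rangle$ for $j \in A_k$ come from distinct blocks, so the choice of $x^*$ can be made simultaneously for the different $k$.

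The cleaner route is to note that $\|(z_j)\|_{p,\textrm{weak}}$ equals the norm of the operator $\ell_{p'}^n \to X$, $e_j \mapsto z_j$. The operator $\ell_{p'}(\cup A_k) \to X$, $\beta \mapsto \sum_k \sum_{j\in A_k}\beta_j J_k x_{jk}$, has norm at least $\big(\sum_k \|(x_{jk})_{j \in A_k}\|_{p,\textrm{weak}}^p\big)^{1/p}$. This follows by testing on $\beta = \oplus_k t_k \beta^{(k)}$ with $\|(t_k)\|_{p'} \leq 1$, using that $X$ is an $\ell_1$-sum and that $\ell_{p'}$ is dual to $\ell_p$. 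Relating this operator to $(x_j)$ itself requires replacing $x_j$ by its $k(j)$-th coordinate. Since coordinate projections on an $\ell_1$-sum act diagonally, this is exactly where care is needed, and I expect it to be the main obstacle. Once the estimate holds, letting $\vr \to 0$ gives $\pi_p(u) \leq C$.
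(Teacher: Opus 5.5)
\textbf{There is a genuine gap.} Your reduction is sound as far as it goes. The lower bound via $u_k = P_k u J_k$, the near-attaining indices $k(j)$, the grouping into the sets $A_k$, and the blockwise use of $\pi_p(u_k)\le C$ are all correct. But the whole difficulty of the upper bound sits in the estimate $\sum_k \|(x_{jk})_{j\in A_k}\|_{p,\textrm{weak}}^p \le \|(x_j)_j\|_{p,\textrm{weak}}^p$, and you leave it unproved.

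Your third paragraph does not address the real issue. Choosing the $x_k^*$ ``simultaneously'' does nothing about the cross terms $\langle x_k^*, x_{jk}\rangle$, $k\neq k(j)$, that appear in $\langle x^*, x_j\rangle$, and these can cancel the main term. For example, take $X_1=X_2=\R$, $x_1=(1,-1)$, $x_2=(0,1)$ and $x^*=(1,1)$; then $\langle x^*,x_1\rangle=0$. Your operator route stops at exactly the same point: compressing each $x_j$ to its $k(j)$-th block without increasing the weak $p$-norm.

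The missing idea is to randomize signs block by block. Let $K_0=\{k: A_k\neq\emptyset\}$, which is finite. For $k\in K_0$ pick $x_k^*\in\ball(X_k^*)$ nearly norming $(x_{jk})_{j\in A_k}$. For $\epsilon\in\{-1,1\}^{K_0}$ let $x_\epsilon^*\in\ball(X^*)$ have $k$-th coordinate $\epsilon_k x_k^*$ for $k\in K_0$ and $0$ otherwise. With $a_{jk}=\langle x_k^*,x_{jk}\rangle$ we get $\langle x_\epsilon^*,x_j\rangle=\sum_{k\in K_0}\epsilon_k a_{jk}$, hence $a_{jk(j)}=\mathbb{E}_\epsilon\big[\epsilon_{k(j)}\langle x_\epsilon^*,x_j\rangle\big]$. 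By Jensen,
\[
\sum_{k\in K_0}\sum_{j\in A_k}|a_{jk}|^p\le\mathbb{E}_\epsilon\sum_j|\langle x_\epsilon^*,x_j\rangle|^p\le\|(x_j)_j\|_{p,\textrm{weak}}^p ,
\]
which is your key estimate up to the near-norming error.

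In your operator language the same averaging gives
\[
\sum_k\Big\|\sum_{j\in A_k}\gamma_jx_{jk}\Big\|\le\mathbb{E}_\epsilon\Big\|\sum_j\epsilon_{k(j)}\gamma_jx_j\Big\|\le\|(x_j)_j\|_{p,\textrm{weak}}\|(\gamma_j)_j\|_{\ell_{p'}}.
\]
With either form, your proof is complete.

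For comparison, the paper's proof is essentially your second route: H\"older weights $\alpha_k$, block-norming coefficients $\beta_j$, and $\gamma_j=\alpha_k\beta_j$. It concludes by citing $\sum_k\|\sum_j\gamma_jx_{jk}\|\le1$. That bound has the sum over \emph{all} $j$ inside each block, whereas what is needed is $\sum_k\|\sum_{j\in S_k}\gamma_jx_{jk}\|\le 1$. So the paper's final step tacitly relies on the same sign-averaging fact displayed above.
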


\begin{proof}
The equality for $p=\infty$, and the inequality $\pi_p(u) \geq \sup_k \pi_p(u_k)$, are easy to verify. So it suffices to show that $\pi_p(u) \leq 1$ whenever $\sup_k \pi_p(u_k) = 1$ for finite $p$.
To this end, pick $x_1, \ldots, x_N \in X$ with $\|(x_j)\|_{p,{\textrm{weak}}} < 1$. We need to establish that $\big( \sum_j \|u x_j\|^p \big)^{1/p} \leq 1$. 
Write $x_j = (x_{jk})$, where $x_{jk} \in X_k$. We can and do assume that the vectors $x_j$ have finite support, that is, $x_{jk} = 0$ for $k > K$. Then
\begin{equation}
\sum_k \big\| \sum_j \gamma_j x_{jk} \big\| \leq 1 \, \, {\textrm{  whenever  }} \, \, \|(\gamma_j)_j\|_{\ell_{p'}} \leq 1 .
\label{eq:bound on sum}
\end{equation}
Represent $\{1, \ldots, N\}$ as a disjoint union of sets $S_k$ ($1 \leq k \leq K$) so that, for $j \in S_k$, $\|u x_j\| = \|u_k x_{jk}\|$. Then
$$
\big( \sum_j \|u x_j\|^p \big)^{1/p} = \Big( \sum_k \sum_{j \in S_k} \|u_k x_{jk}\|^p \Big)^{1/p} .
$$
Find nonnegative $(\alpha_k)$ so that $ \|(\alpha_k)_k\|_{\ell_{p'}} = 1$ and 
$$
\Big( \sum_k \sum_{j \in S_k} \|u_k x_{jk}\|^p \Big)^{1/p} =
\sum_k \alpha_k \Big( \sum_{j \in S_k} \|u_k x_{jk}\|^p \Big)^{1/p} .
$$
The inequality $\pi_p(u_k) \leq 1$ implies
$$
\big( \sum_j \|u x_j\|^p \big)^{1/p} \leq \sum_k \alpha_k \| (x_{jk})_{j \in S_k} \|_{p, \textrm{weak}} .
$$
It remains to show that the right-hand side does not exceed $1$.

For each $k$, find $\beta_j$ ($j \in S_k$) so that $\|(\beta_j)_{j \in S_k}\|_{\ell_{p'}} = 1$ and 
$$
\| \sum_{j \in S_k} \beta_j x_{jk} \| = \| (x_{jk})_{j \in S_k} \|_{p, \textrm{weak}} .
$$
Then
$$
\sum_k \alpha_k \| (x_{jk})_{j \in S_k} \|_{p, \textrm{weak}} = \sum_k \alpha_k \| \sum_{j \in S_k} \beta_j x_{jk} \| = \sum_k \| \sum_{j \in S_k} \gamma_j x_{jk} \| ,
$$
where, for $j \in S_k$, $\gamma_j = \alpha_k \beta_j$. Then 
$\|(\gamma_j)_j\|_{\ell_{p'}} = 1$, since $\gamma_j = \alpha_k \beta_j$ for $j \in S_k$, $\|(\alpha_k)_k\|_{\ell_{p'}} = 1$, and $\|(\beta_j)_{j \in S_k}\|_{\ell_{p'}} = 1$ for every $k$. The result now follows from \eqref{eq:bound on sum}.
\end{proof}

Now let $T = jS$, where $j : c_0 \to X$ is an embedding (we make our life more convenient by renorming $X$ to make $j$ isometric).
We have already established that $T$ is compact; by \Cref{p-summing norm}, $T^*$ is $p$-summing. The dual $p$-persistence of $T$ (with $X_0 = X$) follows from:

\begin{lemma}\label{p-sum lower est}
 There exists a positive ${\boldsymbol{\gamma}}$ $($which may depend on $p)$ with the following property: if $j : c_0 \to X$ is an isometric embedding, and $E$ is a subspace of $X^*$ of finite codimension, then $\pi_p(S^* j^*|_E) \geq {\boldsymbol{\gamma}}$.
\end{lemma}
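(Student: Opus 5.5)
The plan is to reduce the estimate to a single finite block $k$ of the decomposition $c_0 = (\oplus_k \ell_\infty^k)_0$, with $k$ chosen large compared with $\codim E$. The first step is to lift $\ell_1 = c_0^* = (\oplus_k \ell_1^k)_1$ into $X^*$ without losing norm. Since $j$ is an isometric embedding, $j^* : X^* \to \ell_1$ is a quotient map. For each unit vector $e_i$ of $\ell_1$, Hahn--Banach gives $x_i^* \in X^*$ with $\|x_i^*\| = 1$ and $j^* x_i^* = e_i$. The linear map $L : \ell_1 \to X^*$, $e_i \mapsto x_i^*$, then satisfies $\|L\| = 1$ and $j^* L = I_{\ell_1}$; this is the lifting property of $\ell_1$. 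As a consequence, for any subspace $F \subset \ell_1$ with $L(F) \subset E$,
$$
\pi_p(S^* j^*|_E) \geq \pi_p(S^* j^* L|_F) = \pi_p(S^*|_F).
$$

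The second step is to choose $F$ and pass to the $k$-th block. Put $m = \codim E$. For a fixed $k$, the subspace $F_k = \{ y \in \ell_1^k : L y \in E \}$ of the $k$-th block $\ell_1^k \subset \ell_1$ has codimension at most $m$ in $\ell_1^k$. On the $k$-th block, $S^*$ acts as $S_k^* = {\boldsymbol{\kappa}}_k q_k^*$, and $q_k^*$ is an isometric embedding. The $p$-summing norm of an operator depends only on the norms of the images $\|u x\|$, so composing with an isometry does not change it. Hence
$$
\pi_p(S^*|_{F_k}) = {\boldsymbol{\kappa}}_k \, \pi_p(I_{F_k}),
$$
where $F_k$ carries the $\ell_1^k$ norm.

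The third step, which is the main point, is a lower bound for $\pi_p(I_F)$ when $F \subset \ell_1^k$ has codimension at most $m$. I would reuse the argument from the proof of \Cref{kappa-k}. There is $G \subset \ell_1^k$ with $\dim G \geq ck$ and $d_{BM}(G, \ell_2^{\dim G}) \leq 2$. Then $G \cap F$ has dimension at least $ck - m$ and is still $2$-isomorphic to a Hilbert space. Therefore
$$
\pi_p(I_F) \geq \pi_p(I_{G\cap F}) \geq \tfrac12 \pi_p\big(I_{\ell_2^{\dim(G \cap F)}}\big) \geq \tfrac12 c_p \sqrt{ck - m}.
$$
Combining this with \Cref{kappa-k}, which gives ${\boldsymbol{\kappa}}_k \geq \gamma^{-1} k^{-1/2}$, we obtain
$$
\pi_p(S^* j^*|_E) \geq \frac{c_p \sqrt{ck - m}}{2\gamma \sqrt{k}}.
$$
Letting $k \to \infty$ with $m$ fixed gives the bound ${\boldsymbol{\gamma}} = c_p \sqrt{c}/(2\gamma)$. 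This constant depends only on $p$, not on $E$ or $X$.

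The step I expect to need the most care is the lifting in the first step. A naive use of the open mapping theorem for $j^*|_E$ onto $j^*(E)$ would give constants that depend on $E$. The norm-one linear lift $L$, defined on all of $\ell_1$ and then intersected with $E$, avoids this, because intersecting with $E$ costs only $m$ dimensions and no norm. I also need to check that the $p$-weak norms are computed in the subspace, which is harmless because the $p$-summing norm of a restriction uses the norm of the subspace.
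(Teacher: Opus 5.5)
Your proof is correct, and it differs from the paper's at the key step.

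The paper uses a lifting $R$ of norm less than $2$ and works inside $R(\ell_1^k) \cap E \subset X^*$. It then invokes \cite[Theorem 2.1]{GTT} to find in that intersection a subspace $c$-isomorphic to $\ell_1^m$ with $m \geq k/c$. The bound then comes from $\pi_p(I_{\ell_1^m}) = {\boldsymbol{\kappa}}_m^{-1}$, which needs both halves of \Cref{kappa-k} to control ${\boldsymbol{\kappa}}_k/{\boldsymbol{\kappa}}_m$ from below.

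You instead use the exact norm-one lift, available because $j^*$ maps $\ball(X^*)$ onto $\ball(\ell_1)$. This pulls $E$ back, at no cost in norm, to a subspace of $\ell_1^k$ of codimension at most $\codim E$. You then intersect it with the Euclidean section $G$ already used in the lower estimate of \Cref{kappa-k}. The $\sqrt{d}$ growth of $\pi_p(I_{\ell_2^d})$, together with only the cotype-$2$ half ${\boldsymbol{\kappa}}_k \geq \gamma^{-1} k^{-1/2}$ of \Cref{kappa-k}, finishes the argument.

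Your route avoids the deeper result of \cite{GTT} on $\ell_1^m$-subspaces of proportional-dimensional subspaces of $\ell_1^k$. It uses only the ingredients already present in the proof of \Cref{kappa-k}, and it produces the explicit constant $c_p\sqrt{c}/(2\gamma)$. What it exploits is that $\codim E$ stays fixed while $k \to \infty$; it would still work for codimension up to a small proportion of $k$. The paper's approach controls subspaces of $R(\ell_1^k)$ of any proportional dimension, which is more than the lemma requires.
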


\begin{proof}
The quotient map $j^* : X^* \to \ell_1$ admits a lifting $R : \ell_1 \to X^*$, of norm less than $2$. Let $F = R(\ell_1)$, then $\|x^*\|/2 \leq \|j^* x^*\| \leq \|x^*\|$ for every $x^* \in F$. 

Recall that we write $S^* = \oplus_k S_k^*$, where $S_k^*$ is defined on $G_k = \ell_1^k$, the ``$k$-th summand'' of $\ell_1$. Then $F_k = R(G_k)$ is $2$-isomorphic to $\ell_1^k$. Clearly $\dim E \cap F_k \geq k - \codim E$.
By \cite[Theorem 2.1]{GTT}, for $k$ large enough $F_k \cap E$ contains a subspace $H$, $c$-isomorphic to $\ell_1^m$, with $m \geq k/c$ ($c > 1$ is a constant). Hence,
$$
\pi_p \big(S^* j^*|_E\big) \geq \pi_p \big( S^* j^* |_H \big) \geq \frac1{2c} {\boldsymbol{\kappa}}_k \pi_p \big(I_{\ell_1^m}\big) = \frac1{2c} \frac{{\boldsymbol{\kappa}}_k}{{\boldsymbol{\kappa}}_m} \geq {\boldsymbol{\gamma}} ; 
$$
here, we use the estimate ${\boldsymbol{\kappa}}_k^{-1} \sim \sqrt{k}$ from \Cref{kappa-k}.
\end{proof}

\subsection{Proof of \Cref{Ip not HP}(3)}

Our first lemma is known, we sketch a proof for convenience.

\begin{lemma}\label{compute nu1}
For any $u \in B(\ell_\infty^N,Z)$, we have $\pi_1(u) = \iota_1(u) = \nu_1(u) = \sum_j \|u \delta_j\|$, where $(\delta_j)_{j=1}^N$ is the canonical basis of $\ell_\infty^N$.
\end{lemma}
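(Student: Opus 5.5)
The plan is to show the chain
\[
\sum_j \|u\delta_j\| \leq \pi_1(u) \leq \iota_1(u) \leq \nu_1(u) \leq \sum_j \|u\delta_j\| .
\]
The two middle inequalities are the standard ones, valid for every operator: $\pi_1 \leq \iota_1 \leq \nu_1$ (see \cite{DJT}). So only the two outer estimates need an argument.

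For the upper estimate on $\nu_1$, write $u$ as a nuclear representation. Let $\delta_j^* \in (\ell_\infty^N)^* = \ell_1^N$ be the coordinate functionals, each of norm one. Then for every $x \in \ell_\infty^N$,
\[
u x = \sum_{j=1}^N \langle \delta_j^*, x\rangle \, u\delta_j .
\]
This is a finite nuclear decomposition, so by the definition of the $1$-nuclear norm,
\[
\nu_1(u) \leq \sum_j \|\delta_j^*\| \, \|u\delta_j\| = \sum_j \|u\delta_j\| .
\]

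For the lower estimate on $\pi_1$, test the definition of the $1$-summing norm on the basis vectors $\delta_1, \ldots, \delta_N$. Their weak $\ell_1$-norm is
\[
\|(\delta_j)_j\|_{1,\textrm{weak}} = \sup\Big\{ \big\| \sum_j \alpha_j \delta_j \big\|_\infty : \max_j |\alpha_j| \leq 1 \Big\} = 1 .
\]
The definition of $\pi_1$ therefore gives $\sum_j \|u\delta_j\| \leq \pi_1(u)$, which closes the chain.

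No step is a real obstacle. The only care needed is to use the normalizations of $\nu_1$ and of the weak norm consistent with \cite{DJT}. In particular, the weak $\ell_1$-norm here is computed with $p' = \infty$ coefficients, which is what makes it equal to $1$.
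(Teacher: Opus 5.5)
Your proof is correct and follows the paper's argument: the same chain $\sum_j \|u\delta_j\| \leq \pi_1(u) \leq \iota_1(u) \leq \nu_1(u) \leq \sum_j \|u\delta_j\|$, with the lower bound obtained by testing $\pi_1$ on the basis $(\delta_j)$, whose weak $\ell_1$-norm is $1$. For the upper bound the paper factors $u = vD$ through the diagonal map $D : \ell_\infty^N \to \ell_1^N$ and a contraction $v$. This is just a repackaging of your explicit nuclear representation $u = \sum_j \delta_j^* \otimes u\delta_j$, which has the minor advantage of not needing $u\delta_j \neq 0$.
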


\begin{proof}
Clearly, $\|(\delta_j)_j\|_{1,{\textrm{weak}}} = 1$, so $\nu_1(u) \geq \iota_1(u) \geq \pi_1(u) \geq \sum_j \|u \delta_j\|$.
On the other hand, denote by $(\sigma_j)$ the canonical basis of $\ell_1^N$. 
Let $D = {\textrm{diag}}(\|u \delta_j\|) : \ell_\infty^N \to \ell_1^N$ (that is, $D \delta_j = \|u \delta_j\| \sigma_j$) and $v : \ell_1^N \to Z : \sigma_j \mapsto u \delta_j/\|u \delta_j\|$.
Then $v$ is contractive, and $u = vD$, hence $\nu_1(u) \leq \|v\| \|D\| = \sum_j \|u \delta_j\|$.
\end{proof}

By the definition of the BAP, we can find finite rank maps $R_n \in B(X)$ so that $R_m R_n = R_n$ when $m >n$ (in other words, $R_m\big|_{\ran R_n} = I_{\ran R_n}$), $R_n \to I_X$ point-norm, and $\sup_n \|R_n\| = \lambda < \infty$.
We shall construct an operator $\widetilde{T} \in B(L_1(0,1),X)$ such that, for the formal identity $id : L_\infty(0,1) \to L_1(0,1)$, the operator $T = \widetilde{T} \circ id$ is compact and dual $1$-persistent relative to $\cup_n \ran(R_n)$. Clearly, both $T$ and $T^*$ will be $1$-summing.

By \cite{bourgain-DP}, there exists $\widetilde{T} : L_1(0,1) \to X$ which is Dunford-Pettis, but not representable. As noted in that paper, $T$ will be compact. It remains to show the dual $1$-persistence.
Throughout, we use the notation $R_n^\circ = I - R_n$.

\begin{lemma}\label{1-summing}
There exists $c_0 > 0$ such that, for any $n$, $\pi_1(R_n^\circ T) \geq c_0$.
\end{lemma}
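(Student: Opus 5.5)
The plan is to bound $\pi_1(R_n^\circ T)$ from below by the total variation of a vector measure, and then to use the non-representability of $\widetilde{T}$ to keep that variation away from $0$ uniformly in $n$.

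\textbf{Step 1 (variation bound).} Let $F(A) = \widetilde{T}\mathbf{1}_A$ for Borel $A \subset (0,1)$. This is an $X$-valued countably additive measure with $\|F(A)\| \leq \|\widetilde{T}\|\,|A|$, so it has bounded variation $|F|$. For each $n$, $F_n = R_n^\circ F$ is also such a measure. Fix a measurable partition $A_1, \ldots, A_N$ of $(0,1)$, and let $J : \ell_\infty^N \to L_\infty(0,1)$ be given by $\delta_j \mapsto \mathbf{1}_{A_j}$. Then $J$ is an isometric embedding, so by the ideal property and \Cref{compute nu1},
$$
\pi_1(R_n^\circ T) \geq \pi_1(R_n^\circ T J) = \sum_j \|R_n^\circ \widetilde{T} \mathbf{1}_{A_j}\| .
$$
Taking the supremum over partitions gives $\pi_1(R_n^\circ T) \geq |F_n|((0,1))$. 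It therefore suffices to show that $\inf_n |F - R_n F|((0,1)) > 0$.

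\textbf{Step 2 (representability of the finite-rank pieces).} Each $R_n F$ is the measure associated with the finite-rank operator $R_n \widetilde{T} : L_1 \to \ran R_n$. Since $\ran R_n$ is finite dimensional, it has the RNP, so $R_n\widetilde{T}$ is representable. Concretely, there is a Bochner integrable $g_n \in L_1((0,1);X)$ with $R_nF(A) = \int_A g_n$, and then $|R_n F - R_m F|((0,1)) = \|g_n - g_m\|_{L_1(X)}$. This uses the standard fact that the variation of an indefinite Bochner integral equals the $L_1$-norm of its density.

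\textbf{Step 3 (contradiction).} Suppose the infimum is $0$. Then there are indices $n_k$ with $|F - R_{n_k}F|((0,1)) \to 0$. (If the infimum is attained with value $0$ at some $n$, then $F = R_nF$ is already representable.) By the triangle inequality for variation,
$$
\|g_{n_k} - g_{n_l}\|_{L_1(X)} \leq |F - R_{n_k}F| + |F - R_{n_l}F| \to 0 ,
$$
so $(g_{n_k})$ is Cauchy in $L_1(X)$ and converges to some $g$. The measure $G(A) = \int_A g$ then satisfies $|F - G|((0,1)) \leq |F - R_{n_k}F| + \|g_{n_k} - g\| \to 0$. Hence $F(A) = \int_A g$ for all $A$, which means $\widetilde{T}$ is representable (by $\widetilde{T}f = \int fg$). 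This contradicts the choice of $\widetilde{T}$ from \cite{bourgain-DP}, so the infimum is positive and can serve as $c_0$.

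The only delicate point is Step 1, namely that the $1$-summing norm of $u \circ id$ dominates the variation of the measure $A \mapsto u\mathbf{1}_A$. This is handled by \Cref{compute nu1} applied through the partition embeddings $J$. The rest is routine vector-measure bookkeeping.
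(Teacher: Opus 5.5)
Your proof is correct, but it takes a different route from the paper's.

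\textbf{The paper's route.} It argues by contradiction inside the theory of operator ideals. Assuming $\pi_1(R_{n_k}^\circ T)<2^{-k}/\lambda$, it uses $\pi_1=\iota_1$ on $L_\infty$-domains together with \cite[Proposition 5.27]{DJT} to get $\nu_1(R_{n_{k+1}}R_{n_k}^\circ T)\le\|R_{n_{k+1}}\|\,\iota_1(R_{n_k}^\circ T)<2^{-k}$. It then telescopes via $R_{n_{k+1}}-R_{n_k}=R_{n_{k+1}}R_{n_k}^\circ$; this is where the nesting $R_mR_n=R_n$ and the bound $\sup_n\|R_n\|=\lambda$ enter. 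The $\nu_1$-limit is identified as $T$ using compactness of $T$ and $R_n\to I_X$. Finally \cite[C3]{DeFl} passes from nuclearity of $T$ to representability of $\widetilde T$.

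\textbf{Your route.} You work with the vector measure $F$. The partition embeddings give $\pi_1(R_n^\circ T)\ge|F-R_nF|((0,1))$. The measures with Bochner densities form an isometric copy of $L_1(0,1;X)$, hence a closed subspace in the variation norm, and it contains every $R_nF$. So your Step 3 amounts to
\[
\pi_1(R_n^\circ T)\ge\operatorname{dist}_{\mathrm{var}}(F,L_1(0,1;X))=:c_0>0 .
\]

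\textbf{Comparison.} Your argument is shorter and gives an explicit constant. It also holds with $R_n$ replaced by an arbitrary finite-rank $R\in B(X)$, since it needs none of the following:
\begin{itemize}
\item the nesting $R_mR_n=R_n$;
\item the uniform bound $\sup_n\|R_n\|<\infty$;
\item the convergence $R_n\to I_X$;
\item the compactness of $T$.
\end{itemize}
It also bypasses both \cite[Proposition 5.27]{DJT} and \cite[C3]{DeFl}. For operators $L_\infty\to X$ given by an $L_1(0,1;X)$ density, $\pi_1$ and $\nu_1$ both equal the $L_1$-norm of the density. Hence $(R_{n_k}T)$ is already $\nu_1$-Cauchy, and no telescoping is needed. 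The paper's version has the merit of staying in the operator-ideal language used in the rest of the section.

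\textbf{Two minor points.}
\begin{itemize}
\item $J$ is isometric only when every $A_j$ has positive measure, but $\|J\|\le 1$ is all you use.
\item To call $\widetilde T$ representable, the density $g$ must be essentially bounded. This follows from $\|\int_A g\|=\|F(A)\|\le\|\widetilde T\|\,|A|$; equivalently, $\widetilde T$ is representable if and only if $F$ has a Bochner derivative.
\end{itemize}
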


\begin{proof}
Suppose otherwise. 
Then there exist $n_1 < n_2 < \ldots$ such that $\pi_1(R_{n_k}^\circ T) < 2^{-k}/\lambda$. 
As the domain of $R_{n_k}^\circ T$ is $L_\infty(0,1)$, the $1$-summing and $1$-integral norms of this operator coincide. Hence, by \cite[Proposition 5.27]{DJT},
$$
\nu_1(R_{n_{k+1}} R_{n_k}^\circ T) \leq \|R_{n_{k+1}}\| \iota_1(R_{n_k}^\circ T) < 2^{-k} .
$$
Note that, for any $K$, $R_{n_{K+1}} - R_{n_K} = R_{n_{K+1}} R_{n_K}^\circ$. By induction, 
$$
R_{n_1} + R_{n_2} R_{n_1}^\circ + \ldots + R_{n_{K+1}} R_{n_K}^\circ = R_{n_{K+1}} .
$$
Hence, the sum $R_{n_1} T + R_{n_2} R_{n_1}^\circ T + \ldots$ is Cauchy in $\nu_1( \cdot )$. Since the ideal of nuclear operators is complete, this sum must converge, necessarily to $T$, as $R_{n_k}T \to T$ in the operator norm. Here we use the fact $T$ is compact and $R_{n_k} \to I_X$ uniformly on compact subsets of $X$.
Thus, $T$ is nuclear, hence $\widetilde{T}$ is representable \cite[C3]{DeFl}. This is the desired contradiction.
\end{proof}

\begin{proof}[Proof of \Cref{Ip not HP}(3), conclusion]
We need to show that, for every $n$, we can find $x_1^*, \ldots, x_N^* \in \ran(R_{n-1})^\perp$ such that $\|(x_j^*)\|_{1,{\textrm{weak}}} \leq 1$, while $\sum_j \|T^* x_j^*\| > c$, where $c$ is a universal constant.

By \Cref{1-summing}, $\pi_1(R_n^\circ T) \geq c_0$ for some $c_0 > 0$. Recall that $R_n^\circ T$ takes values in $X \subset L_1(0,1)$. A discretization argument shows that there exists a finite rank contraction $U : L_1(0,1) \to \ell_1^N$ so that $\pi_1(S T) > c_0/2$, where $S = U R_n^\circ$. We can identify $S$ with a tuple $(x_1^*, \ldots, x_N^*)$ via $Sx = ( \langle x_j^* , x \rangle )_j$.  Note that $S$ vanishes on $\ran(R_{n-1})$, so $x_j^*$ vanish on $\ran(R_{n-1})$, and $\|(x_j^*)\|_{1,{\textrm{weak}}} = \|S\| \leq \lambda + 1$.
As $ST$ takes $L_\infty(0,1)$ to $\ell_1^N$, we have $\pi_1(ST) = \iota_1(ST) = \iota_1(T^*S^*)$ (see \cite[Chapter 5]{DJT}).
Denote by $(\delta_j)$ the canonical basis of $\ell_\infty^N$, then $S^* \delta_j = x_j^*$ hence $T^* S^* \delta_j = T^* x_j^*$. By \Cref{compute nu1}, $\sum_j \|T^* x_j^*\| = \iota_1(T^* S^*)$, which is known to exceed $c_0/2$.
\end{proof}

\section{The closure of $I_{w^*}$ in $\| \cdot \|_\infty$} \label{sup-norm}
The situation changes drastically if we consider the $\sup$ norm. For the sake of brevity, we denote $\ball(X^*)$, equipped with the weak$^*$ topology, by $K$. 
All the function spaces we are dealing with -- in particular, $I_{w^*}[X] \subset \nip[X] \subset \Ip[X] \subset \Hp[X]$ are linear subspaces of $C(K)$; we equip the latter with its usual $\sup$ norm $\| \cdot \|_\infty$. 

\begin{proposition}\label{denseness}
For any Banach space $X$, and $1 \leq p \leq \infty$, 
$$ \overline{\ball(I_{w^*}[X])}^{\| \cdot \|_\infty} = \overline{\ball(\Ip[X])}^{\| \cdot \|_\infty} = \ball(\Hp[X]) . $$
Here, $\ball(I_{w^*}[X])$ consists of those $f \in I_{w^*}[X]$ for which $\|f\|_p \leq 1$, where $\| \cdot \|_p$ is the norm inherited from $\Hp[X]$.
\end{proposition}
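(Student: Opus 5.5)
Since $\ball(I_{w^*}[X]) \subset \ball(\Ip[X]) \subset \ball(\Hp[X])$, the plan is to prove two things: that $\ball(\Hp[X])$ is $\|\cdot\|_\infty$-closed in $C(K)$, and that every $f \in \ball(\Hp[X])$ is a $\|\cdot\|_\infty$-limit of elements of $\ball(I_{w^*}[X])$. For $p=\infty$ there is nothing to prove, because all the spaces coincide with the positively homogeneous weak$^*$-continuous functions and $\|\cdot\|_\infty$ is the norm. So I assume $p<\infty$.

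Closedness is handled directly from \eqref{def of p-norm}. If $f_n \to f$ uniformly on $K$, then by positive homogeneity $f_n \to f$ pointwise on $X^*$. The condition $(\sum_i |f(x_i^*)|^p)^{1/p} \le \|(x_i^*)\|_{p,\textrm{weak}}$ involves only finitely many points, so it passes to pointwise limits. Moreover, the uniform limit of positively homogeneous functions that are weak$^*$ continuous on $K$ is again weak$^*$ continuous on bounded sets. Hence $f \in \ball(\Hp[X])$.

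For density, fix $f \in \ball(\Hp[X])$. Since $I_{w^*}$ is a lattice and $f = f_+ - f_-$, I first try to treat $f \ge 0$. This reduction needs some care, because $\|f_+\|_p$ and $\|f_-\|_p$ need not add up correctly. To sidestep this, I will truncate the positive and negative parts simultaneously, as follows.

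\begin{itemize}
\item Fix $\vr>0$. Use the compactness of $K$ and the continuity of $f$ to find $x_1,\dots,x_m \in X$ such that $|f(x^*)|<\vr$ whenever $x^*\in K$ and $\max_i|\langle x^*,x_i\rangle|<1$. This is the same finite-dimensional argument as in the proof of \Cref{weak* continuity}, (3)$\Rightarrow$(1): the set $\{|f|\ge\vr\}\cap K$ is weak$^*$-compact and misses $0$, so it is covered by finitely many sets of the form $\{|\langle\cdot,x_i\rangle|>1\}$.
\item Put $h = N\bigvee_i|\delta_{x_i}| \in I_{w^*}[X]_+$ with $N$ large, and define $g = (f\wedge h)\vee(-h)$. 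Then $g$ lies in the ideal generated by the $\delta_x$, so $g \in I_{w^*}[X]$.
\item Since $|g|\le|f|$ pointwise, we get $\|g\|_p\le\|f\|_p\le1$ directly from \eqref{def of p-norm}.
\item On $K$, if $\max_i|\langle x^*,x_i\rangle|\ge1$, then $h(x^*)\ge N\ge\|f\|_\infty$, so $g=f$ there. Otherwise $|f-g|\le|f|<\vr$.
\end{itemize}

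Together these give $\|f-g\|_\infty\le\vr$. This disposes of the sign issue automatically, since no splitting into $f_\pm$ is used.

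The only step that is not routine is the compactness argument producing the finitely many $x_i$, and that mirrors \Cref{weak* continuity}. The key point in the whole argument is that the $p$-norm is monotone in $|f|$, so truncating by an element of the ideal never increases it.
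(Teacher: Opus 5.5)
Your proof is correct, and it takes a genuinely different and much more elementary route than the paper's.

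\textbf{The paper's route.} The paper proves density by a duality argument. For $f\ge 0$ with $f^p$ outside the uniform closure of $A^p$, it separates by a positive functional on the lattice $W$ of $p$-homogeneous functions and represents it by a positive Radon measure on $K$. It then partitions $K$ into finitely many pieces on which $f^p$ barely varies. The barycenters of the normalized restrictions of the measure give a family with weak $p$-norm at most $1$ on which $\sum_i f(\cdot)^p\ge 1-\vr$.

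What this buys is a stronger statement. The approximants can be taken from the set $A$ of functions dominated by $(\sum_i|\delta_{x_i}|^p)^{1/p}$ with $\sum_i\|x_i\|^p\le 1$. This is a dual description of the unit ball of $\Hp[X]$.

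\textbf{Your route.} Your truncation $g=(f\wedge h)\vee(-h)$ uses only three facts: $\|\cdot\|_p$ is a lattice norm, it is lower semicontinuous under pointwise limits, and $f$ is small on a basic weak$^*$ neighbourhood of $0$ in $K$. The $p$-norm bound comes from $|g|\le|f|$ rather than from domination by $p$-sums. So your multiplier $N$ is uncontrolled, but that is irrelevant for the proposition as stated.

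Note that the paper's own final step, $h=f_+\wedge g-f_-\wedge g$, already satisfies $|h|\le|f|$. So any $g\in I_{w^*}[X]_+$ uniformly close to $|f|$ would have sufficed there. Your observation is that such a $g$ is trivially available.

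\textbf{Two small remarks.}
\begin{enumerate}
\item The finitely many $x_i$ come directly from weak$^*$ continuity of $f$ at $0$, via a basic weak$^*$ neighbourhood, so no compactness is needed.
\item Your truncation argument works verbatim for $p=\infty$. That is preferable to saying all the spaces coincide there, since $I_{w^*}[X]$ itself is in general only dense in $H_{w^*}^{\infty}[X]$, not equal to it.
\end{enumerate}
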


\begin{proof}
For $p = \infty$, this result is contained in \cite{OiTTT}. Henceforth, we assume $p \in [1,\infty)$. 

Clearly $\ball(\Ip[X]) \subset \ball(\Hp[X])$, and, since the $\| \cdot \|_\infty$ limits do not increase the $\| \cdot \|_p$ norm, $\overline{\ball(\Ip[X])}^{\| \cdot \|_\infty} \subset \ball(\Hp[X])$. Our goal is to establish $ \overline{\ball(I_{w^*}[X])}^{\| \cdot \|_\infty} \supset \ball(\Hp[X])$.

Denote by $A$ the set of all positively homogeneous functions $g \in C(K)$, for which there exist $x_1, \ldots, x_N \in X$ so that $\sum_{i=1}^N \|x_i\|^p \leq 1$, and $|g| \leq \big( \sum_i |\delta_{x_i}|^p \big)^{1/p}$. 
Further, let $A^p = \{h \in C(K) : |h|^{1/p} \in A\}$. We view $A^p$ as a subset of $W$ -- the sublattice of $C(K)$ consisting of functions $g$ which satisfy $g(sx^*) = s^p g(x^*)$, for any $s \geq 0$ and $x^* \in X^*$. 
In other words, $h \in W$ lies in $A^p$ iff there exist $x_1, \ldots, x_N \in X$ so that $\sum_{i=1}^N \|x_i\|^p < 1$, and $|h| \leq \sum_i  |\delta_{x_i}|^p$. 

First deal with positive functions: show that, if $f \in \Hp[X]_+$, and $f^p$ doesn't belong to the $\| \cdot \|_\infty$-closure $A^p$, then $\|f\|_p \geq 1$. 

It is easy to see that $A^p$ is solid and convex. Therefore, by \cite{OiTu}, there exists $\phi \in W^*_+$ such that $\langle \phi , f^p \rangle \geq 1$, and 
\begin{equation}
\langle \phi , g \rangle \leq 1 \, \, {\textrm{for  any  }} \, \, g \in A^p .
\label{eq:action on Ap}
\end{equation}
Denote by $j$ the embedding $W \to C(K)$. Then $j^* : M(K) \to W^*$ is a weak$^*$ continuous quotient map ($M(K) = C(K)^*$ is the space of finite Radon signed measures on $K$), hence we can find $\mu \in M(K)$ so that $j^* \mu = \phi$, and $\|\mu\| = \|\phi\|$.
Write $\mu = \mu_+ - \mu_-$, then $j^* \mu_+ \geq \phi$, hence $\|\mu_+\| \geq \|\phi\| = \|\mu\|$. On the other hand, $M(K)$ is an AL-space, hence $\|\mu\| = \|\mu_+\| + \|\mu_-\|$. This is only possible if $\mu_- = 0$, so $\phi$ is implemented by a positive Radon measure $\mu$.

Fix $\vr > 0$. For each $x^* \in K$, there exists an open set $U(x^*) \subset K$ so that $|f(x^*)^p - f(y^*)^p| \leq \vr/(2\|\mu\|)$ whenever $y^* \in U(x^*)$. Moreover, for this $U(x^*)$ there exist $x_1, \ldots, x_M \in X$ (depending on $x^*$) so that
$$
\big\{ y^* \in K : \max_i | \langle y^*, x_i \rangle | < 1 \big\} \subset U(x^*) .
$$
The set on the left is convex, and therefore, we can assume that $U(x^*)$ is convex for any $x^*$. Such sets form an open cover of $K$, from which we can extract a finite subcover, say $U(x_1^*), \ldots, U(x_L^*)$.
Then $U(x_1^*)$, $U(x_2^*) \backslash U(x_1^*)$, $\cdots$, $U(x_L^*) \backslash \cup_{j=1}^{L-1} U(x_j^*)$ are disjoint Borel sets covering $K$. By tossing out sets of measure $0$, we obtain disjoint Borel sets $V_1, \ldots, V_N \subset K$ s.t. (i) $\mu(V_i) > 0$ for any $i$, and $\mu(K \backslash (\cup_i V_i)) = 0$, and (ii) for any $i$, $|f(y^*)^p - f(z^*)^p| < \vr/\|\mu\|$ whenever $y^*, z^* \in \overline{{\textrm{conv}}(V_i)}$. 

Let $t_i = \mu(V_i)$ and $\nu_i = t_i^{-1} \chi_{V_i} \mu$ (so $\nu_i$ is a probability measure supported on $V_i$). By \cite[Theorem 1.1]{Phelps}, any $V_i$ possesses a barycenter -- that is, a (unique) point $x_i^* \in \overline{{\textrm{conv}}(V_i)}$ so that $\langle x_i^*, x \rangle = \int \langle x^*, x \rangle d\nu_i(x^*)$ for any $x \in X$. 

Plugging $g = |\delta_x|^p$ ($x \in \sphere(X)$) into \eqref{eq:action on Ap}, we obtain:
\begin{align*}
1 
& 
\geq \big\langle \mu, |\delta_x|^p \big\rangle = \int_K | \langle x^*, x \rangle|^p \, d\mu(x^*) = \sum_i t_i \int | \langle x^*, x \rangle|^p \, d\nu_i(x^*)
\\ &
\geq \sum_i t_i \Big| \int \langle x^*, x \rangle  \, d\nu_i(x^*) \Big|^p = \sum_i t_i \big| \langle x_i^*, x \rangle \big|^p  = \sum_i \big| \langle t_i^{1/p} x_i^*, x \rangle \big|^p 
\end{align*}
or equivalently, $\|(t_i^{1/p} x_i^*)\|_{p,{\textrm{weak}}} \leq 1$.

On the other hand, for each $i$ and $x^* \in V_i$, we have $f(x^*)^p \leq f(x_i^*)^p + \vr/\|\mu\|$, hence
\begin{align*}
1 
&
\leq \int f(x^*)^p \, d\mu(x^*) = \sum_i t_i \int_{V_i} f(x^*)^p \, d\nu_i(x^*) 
\\ &
\leq \sum_i t_i \Big( f(x_i^*)^p + \frac{\vr}{\|\mu\|} \Big) = \sum_i f\big( t_i^{1/p} x_i^* \big)^p + \vr ,
\end{align*}
and so, $\|f\|_p^p \geq \sum_i f\big( t_i^{1/p} x_i^* \big)^p \geq 1-\vr$, and we are done, since $\vr>0$ is arbitrary.

For a general $f \in \ball(\Hp[X])$, write $f = f_+ - f_-$. We have shown that, for any $\vr > 0$, there exists a positive $g \in A \subset \ball(I_{w^*}[X])$ so that $\| |f| - g \|_\infty < \vr$.
Now let $h = f_+ \wedge g - f_- \wedge g$. Clearly $|h| = f_+ \wedge g + f_- \wedge g \leq g$, hence $h \in \ball(I_{w^*}[X])$. Also, $$|f - h| = |f_+ - f_+ \wedge g| + |f_- - f_- \wedge g| \leq ||f|-g|,$$ so $\|f - h\|_\infty < \vr$.
\end{proof}

\section{Embedding of $\FBLp[X]$ into $\nip[Y]$ and $\FBLp[Y]$}\label{FBLp into Ip}

In \cite{LaTr}, it was shown that, if $X$ is a Banach space with an infinite-dimensional separable quotient, then, for $p \in [1, \infty)$, $\FBLp[X]$ is a proper subspace of $\Ip[X]$. We obtain the following strengthening of this result. 

\begin{theorem}\label{very irregular}
Let $X$ and $Y$ be infinite-dimensional Banach spaces, and let $i: X \to Y$ be an isometric embedding. Then, for $p \in [1,\infty)$, 
$\overline{i}(\FBLp[X])$ is not a regular sublattice of $\nip[Y]$.
\end{theorem}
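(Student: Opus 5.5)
The idea is to produce a family $\{f_k\}\subset\FBLp[X]_+$ whose supremum in $\FBLp[X]$ is $\mu|\delta_{x_0}|$, while in $\nip[Y]$ the images $\overline{i}f_k$ have a strictly smaller upper bound. The mechanism is \Cref{supremum in FBL}. If $T\in B(Z,X)$ is compact and dual $p$-persistent relative to a dense $X_0$, $\phi=\phi_{T^*}$, and $f_k=\phi_k\wedge\mu|\delta_{x_0}|$ with $\phi_k=\phi_{T_k^*}$ coming from an NFDR (or SFDR), then $f_k\in\FBLp[X]$ by \Cref{fin rank ops}. Moreover, any $g\in\FBLp[X]$ with $g\ge f_k$ for all $k$ satisfies $g\ge \mu|\delta_{x_0}|\wedge\phi$, and therefore $g\ge\mu|\delta_{x_0}|$. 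Since $\mu|\delta_{x_0}|$ is itself an upper bound, it is the supremum in $\FBLp[X]$.

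On the $Y$ side, I would take $h=(\mu|\delta_{x_0}|\wedge\phi)\circ i^*$, which equals $\mu|\delta_{ix_0}|\wedge\phi_{(iT)^*}$. Because $iT$ is compact, \Cref{weak* continuity} makes $\phi_{(iT)^*}$ weak$^*$ continuous on bounded sets. Also $\pi_p((iT)^*)\le\pi_p(T^*)<\infty$, so $h\in\Hp[Y]$. Since $0\le h\le\mu|\delta_{ix_0}|\in\FBLp[Y]$, we get $h\in\nip[Y]$. Clearly $h\ge\overline{i}f_k$ for every $k$, since $\phi_k\le\phi$. Finally, $h\ne\mu|\delta_{ix_0}|$: since $\phi$ is weak$^*$ continuous at $0$ on $\ball(X^*)$, there are norm-one $x^*$ with $\phi(x^*)$ small, and choosing $\mu$ large (or adjusting $x_0\in X_0$) yields $\phi(x^*)<\mu|\langle x^*,x_0\rangle|$. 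Lifting $x^*$ to $y^*$ with $i^*y^*=x^*$ gives $h(y^*)<\mu|\delta_{ix_0}|(y^*)$. Hence $\overline{i}(\mu|\delta_{x_0}|)$ is not the supremum of $\{\overline{i}f_k\}$ in $\nip[Y]$, and regularity fails.

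So everything reduces to constructing, in an \emph{arbitrary} infinite-dimensional $X$, a compact dual $p$-persistent $T\in B(Z,X)$ with $T^*\in\Pi_p$. I expect this to be the main obstacle, since the earlier examples (\Cref{Ip not HP}) used special structure such as $c_0$, $JT_*$, or non-RNP subspaces of $L_1$. My first attempt would imitate the $c_0$ construction using Dvoretzky's theorem. Choose finite-dimensional subspaces $E_k\subset X$, $2$-isomorphic to $\ell_2^{n_k}$ with $n_k\to\infty$ rapidly, placed so that $\sum_k E_k$ behaves like an $\ell_1$- or almost-unconditional-type sum; a Mazur-type/blocking argument should make $E_k$ ``almost orthogonal'' to previously chosen finite sets. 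Then set $T=\sum_k c_k u_k$ on $Z=(\oplus_k\ell_2^{n_k})_1$, where $u_k:\ell_2^{n_k}\to E_k$ is the Dvoretzky isomorphism and $c_k\sim n_k^{-1/2}$. This scaling normalizes $\pi_p(c_ku_k^*)\asymp 1$ (using $\pi_p(I_{\ell_2^n})\asymp\sqrt n$, \cite[Corollary 4.13]{DJT}). With $c_k\to0$, $T$ is compact, being a norm limit of finite-rank maps. For $\pi_p(T^*)<\infty$ I would use \Cref{p-summing norm} after passing through the $(\oplus_k\ell_2^{n_k})_\infty$ structure on the dual side. If the sum is not an honest $\ell_1$-sum, I would sparsify so that the $c_k$ are summable in $\pi_p$, at the cost of making persistence more delicate.

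Dual $p$-persistence relative to $X_0=X$ would then be verified as in \Cref{p-sum lower est}. A finite-codimensional $E\subset X^*$ restricts, on $E_k^*\cong\ell_2^{n_k}$, to a subspace of codimension at most $\codim E$, which still contains $\ell_2^{m}$ with $m\ge n_k-\codim E$. So $\pi_p(T^*|_E)\gtrsim c_k\sqrt{n_k-\codim E}\ge c>0$ for $k$ large. The delicate points are making $T^*$ genuinely $p$-summing while keeping the lower bound uniform, and obtaining an SFDR or NFDR. The latter is automatic, since every operator has an NFDR and $Z$ here is separable.
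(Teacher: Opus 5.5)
Your reduction is correct and matches the paper's. \Cref{supremum in FBL} makes $\mu|\delta_{x_0}|$ the supremum of $f_k=\phi_k\wedge\mu|\delta_{x_0}|$ in $\FBLp[X]$, while $h=\overline{i}(\mu|\delta_{x_0}|\wedge\phi)$ is a strictly smaller upper bound in $\nip[Y]$. To see $h\neq\mu|\delta_{ix_0}|$, simply take $\mu>\|T\|$ and a norm-one $x^*$ with $\langle x^*,x_0\rangle=1$. Also, $p$-summability of $T^*$ is never needed: $\|h\|_p\le\mu$ already follows from $h\le\mu|\delta_{ix_0}|$, and \Cref{supremum in FBL} uses only compactness and persistence.

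\textbf{The gap.} Your construction of $T$ fails for a structural reason, not a technical one. The estimate $\pi_p(T^*|_E)\gtrsim c_k\sqrt{n_k-\codim E}$ infers a lower bound on a $p$-summing norm from the \emph{range} $u_k^*(E)$ being a large Hilbert space. Such lower bounds come instead from subspaces of the \emph{domain} on which the operator is bounded below. A surjection onto $\ell_2^n$ can have $\pi_1$ bounded independently of $n$, because weakly $p$-summable sequences cannot be lifted through $u_k^*|_E$ with control.

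Concretely, take $X=c_0$. Grothendieck's theorem gives $\pi_p(c_ku_k^*)\le\pi_1(c_ku_k^*)\le K_Gc_k\to0$, so your normalization $\pi_p(c_ku_k^*)\asymp 1$ is false. Now let $F=E_1+\dots+E_K$. Then $u_k^*|_{F^\perp}=0$ for $k\le K$, so $\pi_p(T^*|_{F^\perp})\le K_G(\sum_{k>K}c_k^p)^{1/p}\to0$ when $n_k$ grows rapidly; a small perturbation of $F$ handles any dense $X_0$. Thus $T$ is not dual $p$-persistent, however the $E_k$ are placed. The same obstruction occurs in every $\mathcal L_\infty$-space, since compactness of $T$ forces $c_k\|u_k\|\to0$.

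\textbf{The missing idea.} Put the Hilbertian pieces in $X^*$ rather than in $X$. By Dvoretzky, pick $E_k\subset X^*$ that is $2$-isomorphic to $\ell_2^k$. Embed $E_k$ into some $\ell_\infty^{N_k}$ with distortion $<2$ and extend to a contraction $v_k:X^*\to\ell_\infty^{N_k}$. Local reflexivity \cite{OjP} then realizes this by a weak$^*$ continuous map: a contraction $u_k:\ell_1^{N_k}\to X$ with $\|u_k^*x^*\|\ge\|x^*\|/2$ for $x^*\in E_k$. Set $T(\xi_k)=\sum_k\kappa_ku_k\xi_k$ on $(\oplus_k\ell_1^{N_k})_1$, with $\kappa_k=\pi_p(I_{\ell_2^k})^{-1}\sim k^{-1/2}$; this is a compact contraction. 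For finite-dimensional $F\subset X$ and $k\ge2\dim F$, the space $E_k\cap F^\perp$ is a Hilbertian subspace \emph{of $F^\perp$} of dimension $\ge k/2$, on which $T^*$ is bounded below by $\kappa_k/2$. Hence $\pi_p(T^*|_{F^\perp})\ge\kappa_k/(4\kappa_{\lceil k/2\rceil})\ge c$, which is dual $p$-persistence relative to $X_0=X$. With this $T$, your reduction goes through verbatim.
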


In the particular case of $X=Y$, we have

\begin{corollary}
$\FBLp[X]$ coincides with $\Ip[X]$ if and only if either $X$ is finite dimensional, or $p=\infty$.
\end{corollary}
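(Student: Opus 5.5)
The corollary has two directions, and the ``only if'' direction follows directly from \Cref{very irregular}.

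\emph{Only if.} Suppose $p \in [1,\infty)$ and $X$ is infinite dimensional. Apply \Cref{very irregular} with $Y = X$ and $i = I_X$. Then $\overline{i}$ is the identity map, so the theorem says that $\FBLp[X]$ is not a regular sublattice of $\nip[X]$.

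If we had $\FBLp[X] = \Ip[X]$, the chain $\FBLp[X] \subset \nip[X] \subset \Ip[X]$ would force $\FBLp[X] = \nip[X]$. The identity embedding of a vector lattice into itself is trivially order continuous, so $\FBLp[X]$ would be a regular sublattice of $\nip[X]$. This is a contradiction, so $\FBLp[X] \neq \Ip[X]$.

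\emph{If, case $p = \infty$.} This is the observation recalled in \Cref{sec:intro}: by \cite[Proposition 2.2]{OiTTT}, $\FBL^{(\infty)}[X]$, $\overline{I}_{w^*}^\infty[X]$ and $H_{w^*}^\infty[X]$ all coincide.

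\emph{If, case $\dim X < \infty$ and $p < \infty$.} Since $\FBLp[X] \subset \Ip[X] \subset \Hp[X]$, it suffices to show $\Hp[X] \subset \FBLp[X]$. I would proceed in three steps.

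\begin{enumerate}
\item \emph{Comparison of norms.} For every $f \in \Hp[X]$,
$$
\|f\|_\infty \leq \|f\|_p \leq C \|f\|_\infty ,
$$
where $\|f\|_\infty = \sup_{\ball(X^*)} |f|$. The left inequality comes from testing \eqref{def of p-norm} on a single functional. For the right one, positive homogeneity gives $|f| \leq \|f\|_\infty \phi_{u}$ with $u = I_{X^*}$. In finite dimensions $u$ is weak$^*$ continuous and $p$-summing, so \Cref{fin rank ops} gives $\phi_u \in \FBLp[X]$ with $\|\phi_u\| = \pi_p(I_{X^*}) =: C < \infty$.
\item \emph{Density of $\FVL[X]$.} The weak$^*$ and norm topologies agree on $X^*$, so every $f \in \Hp[X]$ is a positively homogeneous function whose restriction to the compact sphere $\sphere(X^*)$ is continuous. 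The restrictions of elements of $\FVL[X]$ form a vector sublattice of $C(\sphere(X^*))$ that contains the constants up to equivalence (again via $\phi_u$) and separates points of $\sphere(X^*)$. Strictly, $\phi_u$ is a norm on $X^*$ rather than the constant $1$. So I would run the lattice Stone--Weierstrass theorem on $f/\phi_u$, or equivalently on functions restricted to the $\phi_u$-unit sphere, where $\FVL[X]$ restricts to a sublattice containing $\phi_u \equiv 1$ up to the approximations provided by \Cref{fin rank ops}. This yields $g_n \in \FVL[X]$ with $\|f - g_n\|_\infty \to 0$.
\item \emph{Conclusion.} By step~1, $(g_n)$ is $\|\cdot\|_p$-Cauchy and converges pointwise to $f$. \Cref{Cauchy seq} then gives $f \in \FBLp[X]$.
\end{enumerate}

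The main technical point is step~2, specifically the normalisation needed because $\FVL[X]$ has no genuine constant function. I would handle it by approximating $\phi_u$ within a factor $1-\delta$ by an element $\psi \in \FVL[X]_+$, as in \Cref{fin rank ops}. Since $\psi$ is bounded below on the sphere, I would then apply the Stone--Weierstrass argument relative to $\psi$, or equivalently on the sphere of the norm $\psi$.
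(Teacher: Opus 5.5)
Your proof is correct, and the ``only if'' direction is exactly the paper's argument: the corollary is stated as the special case $Y = X$, $i = I_X$ of \Cref{very irregular}, where $\FBLp[X] = \Ip[X]$ would force $\FBLp[X] = \nip[X]$ and make the identity embedding trivially regular. The paper leaves the converse implicit. Your finite-dimensional argument is sound, with two simplifications available. First, the worry about constants is unnecessary: the lattice Stone--Weierstrass theorem only needs two-point interpolation, which $\FVL[X]$ has on $\sphere(X^*)$ via $\delta_x$ and, for antipodal points, via $\delta_x^{+}$ and $\delta_x^{-}$. Second, Step~1 alone suffices: the equivalence of $\|\cdot\|_p$ and $\|\cdot\|_\infty$ gives $\FBLp[X] = \FBL^{(\infty)}[X] = H^\infty_{w^*}[X] \supset \Hp[X]$, using the same $p=\infty$ fact you already cite.
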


\begin{proof}[Proof of \Cref{very irregular}]
For $k \in \N$, find $E_k \subset X^*$ which is $2$-isomorphic to $\ell_2^k$.
Further, find a contractive embedding $w_k : E_k \to \ell_\infty^{N_k}$ with $\|w_k^{-1}\| < 2$. Extend $w_k$ to a contraction $v_k : X^* \to \ell_\infty^{N_k}$.
By \cite{OjP}, there exists a contraction $u_k : \ell_1^{N_k} \to X$ so that, for every $x^* \in E_k$, $\|u_k^* x^*\| \geq \|x^*\|/2$.

Define $T : \ell_1 = (\oplus_k \ell_1^{N_k})_1 \to X : (\xi_k) \mapsto \sum_k {\boldsymbol{\kappa}}_k u_k \xi_k$, where ${\boldsymbol{\kappa}}_k = \big (\pi_p(I_{\ell_2^k})\big )^{-1}$. By \cite[Corollary 4.13]{DJT}, ${\boldsymbol{\kappa}}_k \sim k^{-1/2}$, so $T$ is a compact contraction.
We shall show that $T$ is dual $p$-persistent with respect to $X$. Consider a finite dimensional $F \subset X$, and find $k \geq 2 \dim F$. Then $\dim (E_k \cap F^\perp) \geq k/2$, and 
\begin{align*}
\pi_p(T^*|_{F^\perp}) \geq \kappa_k \pi_p(u_k^*|_{E_k \cap F^\perp}) 
&
\geq \frac{\kappa_k}2 \pi_p(I_{E_k \cap F^\perp}) \geq \frac{\kappa_k}4 \pi_p\big(I_{\ell_2^{\dim (E_k \cap F^\perp)}}\big) 
\\ &
\geq \frac{\kappa_k}{4 \kappa_{\lceil k/2 \rceil}} \geq c , 
\end{align*}
with $c$ being a universal constant.

Now let $\phi(x^*) = \|T^* x^*\| \in H_{w^*}[X]$, let $T_n$ be the restriction of $T$ to $(\oplus_{k=1}^n \ell_1^{N_k})_1$, and denote $\phi_n(x^*) = \|T_n^* x^*\|$ (which clearly belongs to $\FBLp[X]$, and even to $\FVL[X]$). Pick $x \in X$ with $\|x\| > 1$; note that $\big| \delta_x \big|$ is not majorized by $\phi$ since $\phi(x^*) \leq 1$ for every norm-one $x^* \in X^*$. 
Let $f_n = \phi_n \wedge \big| \delta_x \big|$, in light of \Cref{supremum in FBL}, $\vee_n^{\FBLp[X]} f_n = \big| \delta_x \big|$. 

On the other hand, $\overline{i} \big| \delta_x \big| = \big| \delta_{ix} \big|$, and $\overline{i} \phi_n = \phi_n' \in \FBLp[Y]$, with $\phi_n'(y^*) = \|T_n^* i^* y^*\|$. Also, $\overline{i} \phi = \phi'$, with $\phi'(y^*) = \|T^* i^* y^*\|$. Thus $\phi' = \vee_n \phi_n'$ pointwise, so 
$$
\vee_n^{\Ip[Y]} \phi_n' = \phi' \wedge \big|\delta_{ix}\big| = \overline{i} \big( \phi \wedge \big|\delta_x\big| \big) \neq \overline{i} \big|\delta_x\big| , 
$$
due to the injectivity of $\overline{i}$.
\end{proof}


It was asked in \cite{AD} (and, indirectly, in \cite{OiTTT} and \cite{OiTTT-cor}) whether, for an isometric embedding $i : X \to Y$, $\overline{i}(\FBLp[X])$ is necessarily a regular sublattice of $\FBLp[Y]$. The answer is known to be positive when $p=\infty$, and when $i(X)$ is complemented in $Y$. Below we show that the answer is negative in general.

\begin{theorem}\label{complem l2}
Suppose $i : X \to Y$ is an isometric embedding, and there exists $c > 0$ so that, for every $s$, there exists $F_s \subset X^*$, $c$-isomorphic to $\ell_2^s$ and $c$-complemented in $X^*$, via a weak$^*$ continuous projection $Q_s$.
Suppose, furthermore, that $\lim_s s^{-1/2} \pi_p(Q_s i^*) = 0$. Then $\overline{i}(\FBLp[X])$ is not a regular sublattice of $\FBLp[Y]$.
\end{theorem}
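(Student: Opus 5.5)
The plan mirrors the proof of \Cref{very irregular}, but now the supremum has to be taken inside $\FBLp[Y]$ rather than $\nip[Y]$. I would build a compact, dual $p$-persistent $T : Z \to X$ with $T^*$ $p$-summing. By \Cref{supremum in FBL}, the net $f_n = \phi_n \wedge |\delta_{x_0}|$ then has supremum $|\delta_{x_0}|$ in $\FBLp[X]$. It remains to exhibit $g \in \FBLp[Y]$ with $g \geq \overline{i} f_n$ for every $n$ but $g \not\geq |\delta_{ix_0}|$. The natural candidate is $g = \phi' \wedge |\delta_{ix_0}|$, where $\phi'(y^*) = \|T^* i^* y^*\|$. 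The whole point is to arrange that $\phi' \in \FBLp[Y]$; since $\FBLp[Y]$ is a sublattice, $g$ then lies in it. Moreover $g \neq |\delta_{ix_0}|$ as soon as $\phi'$ fails to majorize $|\delta_{ix_0}|$, which we get by taking $\|x_0\|$ larger than $\sup_{\|y^*\|\le 1}\phi'(y^*)$.

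The operator $T$ is built from the complemented Hilbertian pieces. Each $Q_s$ is weak$^*$ continuous with range $F_s$, so $Q_s = R_s^*$ for some $R_s : F_{s*} \to X$, where $F_{s*}$ is the predual (an $s$-dimensional space, $c$-isomorphic to $\ell_2^s$). Choose a sparse sequence $s_k$ and weights $\kappa_k \approx s_k^{-1/2}$, comparable to $\pi_p(I_{\ell_2^{s_k}})^{-1}$ by \cite[Corollary 4.13]{DJT}. Define $T : Z = (\oplus_k F_{s_k *})_1 \to X$ by $T(\xi_k) = \sum_k \kappa_k R_{s_k}\xi_k$. Then $T^* = \oplus_k \kappa_k Q_{s_k}$, viewed as a map into $(\oplus_k F_{s_k})_\infty$. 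By \Cref{p-summing norm}, $\pi_p(T^*) \leq \sup_k \kappa_k \pi_p(Q_{s_k}) \lesssim c^2$, and $T$ is compact because $\kappa_k \to 0$ and each piece has finite rank. Dual $p$-persistence relative to $X$ follows exactly as in the proof of \Cref{very irregular}. Given a finite-dimensional $F$, pick $s_k \geq 2\dim F$. Then $F_{s_k}\cap F^\perp$ has dimension at least $s_k/2$, and $Q_{s_k}$ restricted to it is the identity, so
$$\pi_p(T^*|_{F^\perp}) \geq \kappa_k\, c^{-1}\pi_p(I_{\ell_2^{\lceil s_k/2\rceil}}) \geq \gamma > 0 .$$

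For the membership $\phi' \in \FBLp[Y]$, note that $\phi' = \phi_{(Ti)^*}$ with $(Ti)^* = i^*$ followed by $T^* = \oplus_k \kappa_k Q_{s_k} i^*$. I will approximate this operator in $\pi_p$ by weak$^*$ continuous finite-rank operators and apply \Cref{nuclear in FBL}. The truncations $\oplus_{k\le n}\kappa_k Q_{s_k}i^*$ are finite rank, and they are weak$^*$ continuous because $Q_{s_k}$ and $i^*$ are. By \Cref{p-summing norm}, the tail satisfies $\pi_p\big(\oplus_{k>n}\kappa_k Q_{s_k} i^*\big) = \sup_{k>n}\kappa_k\pi_p(Q_{s_k}i^*)$. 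This tends to $0$ precisely by the hypothesis $s^{-1/2}\pi_p(Q_s i^*) \to 0$, which is where that assumption is used. Hence $\phi' \in \FBLp[Y]$.

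It remains to check the upper bound: $\overline{i} f_n = \phi_n' \wedge |\delta_{ix_0}| \leq \phi' \wedge |\delta_{ix_0}| = g$, using $\phi_n' \le \phi'$. If $\overline{i}(\FBLp[X])$ were regular, then $\overline{i}|\delta_{x_0}| = \vee_n \overline{i} f_n \leq g$, forcing $|\delta_{ix_0}| \leq \phi'$. This is false by the choice of $\|x_0\|$, which gives the contradiction.

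The main obstacle is the $\pi_p$-tail estimate, that is, making sure \Cref{p-summing norm} applies: the target must be an $\ell_\infty$-sum and the decomposition must respect coordinates. A second point to check is that the truncations are genuinely weak$^*$ continuous as maps on $Y^*$, so that \Cref{nuclear in FBL} is applicable.
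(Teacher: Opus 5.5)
Your overall strategy is the paper's. You use the same $f_n=\phi_n\wedge|\delta_{x_0}|$, the same appeal to \Cref{supremum in FBL}, and the same reduction to proving $\overline{i}\phi\in\FBLp[Y]$. Building $T$ directly from the preduals of the $F_{s_k}$, instead of the paper's $R_k u_k$ with $u_k:\ell_1^{N_k}\to X$, is a harmless simplification, and your compactness and dual $p$-persistence arguments are fine.

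The gap is in the step you yourself flagged, the tail estimate. \Cref{p-summing norm} is about block-diagonal operators $\oplus_k u_k:(\oplus_k X_k)_1\to(\oplus_k Y_k)_\infty$, whose domain splits as an $\ell_1$-sum. Your tail $V_n: y^*\mapsto(\kappa_kQ_{s_k}i^*y^*)_{k>n}$ is not of this form. Its domain $Y^*$ is not decomposed, so it is a ``column'' of operators into an $\ell_\infty$-sum; the notation $\oplus_k\kappa_kQ_{s_k}$ hides this. For columns, the identity $\pi_p(V_n)=\sup_{k>n}\kappa_k\pi_p(Q_{s_k}i^*)$ fails in general. For instance, an isometric embedding $\ell_2^m\to\ell_\infty$ is a column of functionals of norm at most $1$, yet its $p$-summing norm equals $\pi_p(I_{\ell_2^m})$, which is of order $\sqrt m$. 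What does hold is $\pi_p(V_n)\le\big(\sum_{k>n}\kappa_k^p\pi_p(Q_{s_k}i^*)^p\big)^{1/p}$, since $\sup_k\|v_kw\|^p\le\sum_k\|v_kw\|^p$. So the bare convergence $\kappa_k\pi_p(Q_{s_k}i^*)\to0$ does not give $\pi_p(V_n)\to0$; you need summability.

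The repair is exactly the paper's argument. Use your freedom in choosing $(s_k)$ to impose $\kappa_k\pi_p(Q_{s_k}i^*)<2^{-k}$, which the hypothesis allows. Then $\pi_p(V_n)\le\sum_{k>n}2^{-k}\to0$, and \Cref{nuclear in FBL} applies. Equivalently, as in the paper, note that $0\le\phi'_n-\phi'_{n-1}\le\kappa_n\|Q_{s_n}i^*(\cdot)\|$ pointwise. Hence $\|\phi'_n-\phi'_{n-1}\|_p<2^{-n}$, and \Cref{Cauchy seq} gives $\phi'\in\FBLp[Y]$.

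The same misuse of \Cref{p-summing norm} produces your bound $\pi_p(T^*)\lesssim c^2$, which is false in general. Take $X=\ell_2\subset C[0,1]$, which satisfies the hypotheses by Grothendieck's theorem, with mutually orthogonal $F_{s_k}$. Then the restriction of $T^*$ to the span of $F_{s_1},\ldots,F_{s_K}$ acts diagonally as $\kappa_k I$, and its $p$-summing norm tends to $\infty$ with $K$. Fortunately you never use this bound, because \Cref{supremum in FBL} needs only compactness and dual $p$-persistence of $T$; simply delete it.
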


\begin{remark}\label{weak-star complementability}
Standard local reflexivity results \cite{OjP} show that the following are equivalent:
\begin{itemize}
    \item $X^*$ containing copies of $\ell_2^n$ uniformly complementably.
    \item $X^*$ containing copies of $\ell_2^n$, uniformly complemented by weak$^*$-continuous projections (in fact, an existing projection can be ``turned into'' a weak$^*$ continuous one).
    \item $X$ containing copies of $\ell_2^n$ uniformly complementably.
\end{itemize}
\end{remark}

\begin{proof}
Find $s_1 < s_2 < \ldots$ such that $s_k^{-1/2} \pi_p(Q_{s_k} i^*) < 2^{-k}$. For convenience, we use notation $E_k = F_{s_k}$ $P_k = Q_{s_k}$. Also, let $R_k \in B(X)$ be such that $R_k^* = P_k$.

As in the proof of \Cref{very irregular}, we can find contractions $u_k \in B(\ell_1^{N_k}, X)$ so that $\|u_k^* x^*\| \geq \|x^*\|/2$ for every $x^* \in E_k$. Consider the operator
$$
T : \ell_1 = \big( \oplus_k \ell_1^{N_k} \big)_1 \to X : (\xi_k) \mapsto \sum_k \gamma_k R_k u_k \xi_k ,
\, \, {\textrm{ where }} \, \gamma_k = \big( c \pi_p(I_{\ell_2^{s_k}}) \big)^{-1}.
$$
Note that $\gamma_k^{-1} \sim \sqrt{s_k}$, and hence $T$ is a compact contraction. Just as in the proof of \Cref{very irregular}, we show that $T$ is dual $p$-persistent relative to $X$.

Let $\phi(x^*) = \|T^* x^*\| \in H_{w^*}[X]$, let $T_n$ be the restriction of $T$ to $(\oplus_{k=1}^n \ell_1^{N_k})_1$, and denote $\phi_n(x^*) = \|T_n^* x^*\| \in \FBLp[X]$. Choose $x_0 \in X$ with $\|x_0\| > 1$, then $\big| \delta_{x_0} \big|$ is not majorized by $\phi$. Define $f_n = \big| \delta_{x_0} \big| \wedge \phi_n \in \FBLp[X]$ and $f = \big| \delta_{x_0} \big| \wedge \phi$.

By \Cref{supremum in FBL}, $\vee_n^{\FBLp[X]} f_n = \big| \delta_{x_0} \big|$. We shall show that $\vee_n^{\FBLp[Y]} \overline{i} f_n = \overline{i} f = \big| \delta_{i x_0} \big| \wedge \overline{i} \phi$; this would complete the proof, as $\overline{i}$ is injective.

Clearly, it suffices to show that $\overline{i} \phi \in \FBLp[Y]$. For any $n \in \N$ and $y^* \in Y^*$, $|\phi_{n}(i^* y^*) - \phi_{n-1}(i^*y^*)| \leq \gamma_n \|u_n^* P_n i^* y^*\|$, hence
$$
\big\| \overline{i} (\phi_{n} - \phi_{n-1}) \big\| \leq \gamma_n \|u_n\| \pi_p(P_n i^*) \leq \frac{\pi_p(P_n i^*)}{\pi_p(I_{E_n})} \leq c 2^{-n} .
$$
Consequently, $(\overline{i} \phi_n)$ is a Cauchy sequence, which converges to $\overline{i} \phi$ pointwise, hence also in $\FBLp[Y]$.
\end{proof}

Applying the preceding proposition, we obtain:

\begin{corollary}\label{scipt L-infty l1}
Suppose $X$ contains $\ell_2^n$ uniformly complementably, and $i : X \to Y$ is an isometric embedding. Then $\overline{i}(\FBLp[X])$ is not a regular sublattice of $\FBLp[Y]$ if one of the following holds:
\begin{enumerate}
    \item $Y$ is a ${\mathcal L}_\infty$ space, and $1 \leq p < \infty$. 
    \item $Y$ is a ${\mathcal L}_1$ space, and $2 \leq p < \infty$. 
\end{enumerate}
\end{corollary}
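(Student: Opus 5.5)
The plan is to apply \Cref{complem l2} directly. Its hypotheses are that $X^*$ contains uniformly complemented copies $F_s$ of $\ell_2^s$ via weak$^*$ continuous projections $Q_s$, and that $s^{-1/2}\pi_p(Q_s i^*) \to 0$. The first hypothesis is supplied by \Cref{weak-star complementability}: since $X$ contains $\ell_2^n$ uniformly complementably, so does $X^*$, with weak$^*$ continuous projections $Q_s$ of norm at most $c$ and $d_{BM}(F_s,\ell_2^s) \leq c$. So everything reduces to estimating $\pi_p(Q_s i^*)$, where $i^* : Y^* \to X^*$ is a contraction. Factor $Q_s i^* = J_s\circ (Q_s i^*)$ with $Q_s i^*$ viewed as a map $Y^* \to F_s$ of norm at most $c$, and $F_s$ $c$-isomorphic to $\ell_2^s$. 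It then suffices to show that every operator $v : Y^* \to \ell_2^s$ has $\pi_p(v) \leq C\|v\|$ with $C$ independent of $s$; this gives $\pi_p(Q_s i^*) = O(1)$, hence $s^{-1/2}\pi_p(Q_s i^*) \to 0$.

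In case (1), $Y$ is a $\mathcal L_\infty$ space, so $Y^*$ is a $\mathcal L_1$ space (complemented in some $L_1(\mu)$, i.e.\ an $L_1$-space up to isomorphism in the local sense). Grothendieck's theorem then says every operator from a $\mathcal L_1$ space into a Hilbert space is $1$-summing, with $\pi_1(v) \leq K_G \lambda \|v\|$, where $\lambda$ is the $\mathcal L_1$ constant. Since $\pi_p \leq \pi_1$ for $p \geq 1$, we get $\pi_p(Q_s i^*) \leq c\cdot c K_G\lambda$, uniformly in $s$.

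In case (2), $Y$ is a $\mathcal L_1$ space, so $Y^*$ is a $\mathcal L_\infty$ space (an injective-type space, locally $\ell_\infty^n$). Operators from a $\mathcal L_\infty$ space into a space of cotype $2$, in particular into $\ell_2^s$ uniformly, are $2$-summing, with $\pi_2(v) \leq C\|v\|$ (\cite[Chapter 11]{DJT}; for Hilbert targets this is again Grothendieck's theorem in its dual form). For $p \geq 2$ we have $\pi_p \leq \pi_2$, so once more $\pi_p(Q_s i^*) = O(1)$. This explains the restriction $p \geq 2$ in (2).

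The main point needing care is that the $\mathcal L_\infty$/$\mathcal L_1$ duality passes to $Y^*$ with controlled constants, and that the summing-norm estimates hold with constants depending only on those local constants and on $c$, not on $s$. Both facts are standard (Lindenstrauss--Rosenthal, together with \cite{DJT}). Once these are in place, \Cref{complem l2} yields that $\overline{i}(\FBLp[X])$ is not a regular sublattice of $\FBLp[Y]$.
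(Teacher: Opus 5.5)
Your proposal is correct and takes the same route as the paper. You obtain weak$^*$ continuous projections $Q_s$ onto uniformly Hilbertian $F_s \subset X^*$ from the uniform complementability of the $\ell_2^n$'s in $X$. You then use Grothendieck's theorem to get $\sup_s \pi_p(Q_s i^*) < \infty$: a uniform $\pi_1$ bound for operators from the $\mathcal L_1$ space $Y^*$ into Hilbert space in case (1), and a uniform $\pi_2$ bound for operators from the $\mathcal L_\infty$ space $Y^*$ into Hilbert space in case (2). This gives $s^{-1/2}\pi_p(Q_s i^*) \to 0$, so \Cref{complem l2} applies.
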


This corollary is applicable, for instance, when $X$ contains an infinite dimensional K-convex complemented subspace \cite[Theorem 19.3]{DJT}.

\begin{proof}
Keep the notation of \Cref{complem l2}: subspaces $\ell_2^s \sim F_s \subset X^*$ are complemented in $X^*$ via weak$^*$ continuous projections $Q_s$, with $\sup_s \|Q_s\| < \infty$.
If $Y$ is a $\mathcal L_\infty$ space, then $Y^*$ is a $\mathcal L_1$ space \cite{LiRo}, hence, by Grothendieck Theorem, $\pi_1(Q_s i^*) \sim \|Q_s i^*\|$, which implies $\sup_s \pi_1(Q_s i^*) < \infty$.
The case of $Y$ being a $\mathcal L_1$ space is similar, and relies on $\sup_s \pi_2(Q_s i^*) < \infty$.
\end{proof}

We also obtain:

\begin{theorem}\label{l q uniformly}
Suppose $p \in [2,\infty)$, $q \in \{\infty\} \cup [1,2)$ and $Y$ contains copies of $\ell_q^n$ uniformly. Then $Y$ contains a subspace $X$ so that $\overline{i}(\FBLp[X])$ is not a regular sublattice of $\FBLp[Y]$, where $i : X \to Y$ stands for the canonical embedding. 
\end{theorem}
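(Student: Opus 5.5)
The plan is to deduce the statement from \Cref{complem l2}. We build inside $Y$ a subspace $X$ containing copies of $\ell_2^s$ that are complemented in $X$ (uniformly in $s$), but are ``badly summing'' as seen from $Y^*$, so that $s^{-1/2}\pi_p(Q_s i^*) \to 0$.

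\emph{Step 1 (Hilbertian pieces inside $\ell_q^N$).} For each $s$, fix $N = N(s)$ and a $2$-isomorphic copy $E \subset Y$ of $\ell_q^N$. Inside $E$, find $G_s$ which is $C$-isomorphic to $\ell_2^s$, with $C$ independent of $s$.
\begin{itemize}
\item For $q = \infty$, use Dvoretzky's theorem.
\item For $q \in [1,2)$, use the span of $s$ i.i.d.\ Gaussian (or $q$-stable-to-Gaussian) vectors, discretized into $\ell_q^N$. These are isometrically $\ell_2^s$ in $L_q$.
\end{itemize}
We keep the freedom to spread $G_s$ over many coordinates of $\ell_q^N$, so that the coordinate functionals restricted to $G_s$ are individually small. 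This is what will make the summing norms small in Step 3.

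\emph{Step 2 (assembling $X$).} Using a Mazur-type gliding-hump/blocking argument, choose $s_1 < s_2 < \cdots$ and pieces $G_{s_k}$ that are ``almost disjoint''. For instance, choose each new $\ell_q^{N}$-copy almost annihilated by a finite norming set for the span of the previous pieces. Arranged this way, $X = \overline{\spn}\,\bigcup_k G_{s_k}$ admits uniformly bounded projections $R_k : X \to G_{s_k}$. Then $F_{s_k} = R_k^*(X^*) \cong G_{s_k}^*$ is $c$-isomorphic to $\ell_2^{s_k}$, and it is $c$-complemented in $X^*$ by the weak$^*$ continuous projection $Q_{s_k} = R_k^*$. \Cref{complem l2} only needs the hypothesis along a subsequence: its proof passes to $s_k$ anyway, so working along $(s_k)$ suffices.

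\emph{Step 3 (the summing estimate; main obstacle).} Up to the isomorphism constants, $Q_{s_k} i^* y^* = y^*|_{G_{s_k}}$. Hence $Q_{s_k} i^*$ factors as
\[
Y^* \to E^* \cong \ell_{q'}^N \xrightarrow{\ \rho\ } G_{s_k}^* \cong \ell_2^{s_k},
\]
where $\rho$ is the restriction map. So it suffices to show $\pi_p(\rho) = o(\sqrt{s_k})$; it is enough for $\pi_p(\rho)$ to be bounded, or to grow slower than $\sqrt{s_k}$.
\begin{itemize}
\item For $q = \infty$ this is easy. $E^* \cong \ell_1^N$, so Grothendieck's theorem gives $\pi_p(\rho) \le \pi_1(\rho) \le K_G \|\rho\| \le C$, exactly as in \Cref{scipt L-infty l1}.
\item For $q \in [1,2)$ this is the step I expect to be hardest, since $\ell_{q'}^N$ is not a GT space. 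I would try to exploit $p \ge 2$ together with the spreading from Step 1. First, bound $\pi_p(\rho) \le \pi_2(\rho)$. Then estimate $\pi_2(\rho)$ by factoring $\rho$ through a diagonal map $\ell_{q'}^N \to \ell_2^N$ (a formal inclusion for the normalized counting measure), whose $2$-summing norm is controlled by Pietsch/Garling-type diagonal estimates as in \Cref{kappa-k}, followed by a contraction $\ell_2^N \to \ell_2^s$. The Gaussian construction makes the rows of the embedding matrix flat, which is what should keep this diagonal cost at $O(1)$, or at least $o(\sqrt{s})$.
\end{itemize}
If this direct route fails, the fallback is to choose $G_s$ inside $\ell_q^N$ differently: use a Kashin-type or random subspace whose dual restriction map is known to be $2$-summing with norm independent of $s$, with the relevant estimates from \cite{GTT} or \cite{DJT}.

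\emph{Step 4 (conclusion).} With $s_k^{-1/2}\pi_p(Q_{s_k} i^*) \to 0$ established and uniform complementation from Step 2, \Cref{complem l2} applies directly to the canonical embedding $i : X \to Y$. It yields that $\overline{i}(\FBLp[X])$ is not a regular sublattice of $\FBLp[Y]$.
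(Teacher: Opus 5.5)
\textbf{Overall structure.} Your architecture matches the paper's: a Mazur-type blocking makes the Hilbertian pieces uniformly complemented in $X$; the bound $\pi_p(Q_{s_k}i^*)\le\|R_k\|\,\pi_p(\rho)$ goes through the restriction map $\rho\colon E^*\cong\ell_{q'}^N\to G_{s_k}^*$; and \Cref{complem l2} is applied along a subsequence. The case $q=\infty$ is also handled as in the paper. The case $q=1$ is equally easy: $\rho\colon\ell_\infty^N\to\ell_2^s$ satisfies $\pi_2(\rho)\le K_G\|\rho\|$ by Grothendieck's theorem, which is how the paper treats $q\in\{1,\infty\}$.

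\textbf{The gap: Step 3 for $1<q<2$.} This step is the real content of the theorem, and your heuristic there runs the wrong way. The ``diagonal'' factor in your factorization is the formal identity $L_{q'}(\mu_N)\to L_2(\mu_N)$. Its $2$-summing norm is exactly $N^{1/q'}$ (test it on the vectors $N^{1/q'}\delta_j$), whatever the flatness of the embedding; flatness only controls the second factor $L_2(\mu_N)\to G_s^*$. So this route gives at best $\pi_2(\rho)\lesssim N^{1/q'}$. That is useful only when $N=o(s^{q'/2})$, so spreading $G_s$ over many coordinates is exactly what you must avoid.

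In fact the construction you describe fails outright. Genuine Gaussians span $\ell_2^s$ in $L_r$ for every $r<\infty$. After a faithful discretization, $g_1,\dots,g_s$ are therefore uniformly Hilbertian in both $L_q(\mu_N)$ and $L_{q'}(\mu_N)$. Then $f\mapsto\sum_i\langle f,g_i\rangle g_i$ is a projection of $L_q(\mu_N)$ onto $G_s$ of norm at most $c_qc_{q'}$. Its adjoint gives a uniformly bounded linear right inverse $w$ of $\rho$. Hence $\pi_p(\rho)\ge\pi_p(I_{G_s^*})/\|w\|\ge c\sqrt{s}$, and so $s^{-1/2}\pi_p(Q_si^*)\not\to0$.

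\textbf{The missing idea.} $G_s$ must sit in $\ell_q^N$ with $N$ proportional to $s$, so that it is badly complemented. The paper takes $2$-isomorphic copies of $\ell_2^n$ in $\ell_q^{m(n)}$ with $m(n)\le cn$. It then proves $n^{-1/2}\pi_2(q_n)\to0$ (\Cref{bad liftings}) by contradiction. Otherwise \Cref{liftings} yields a uniformly bounded lifting on a subspace of proportional dimension. That gives a uniformly Hilbertian subspace of $\ell_{q'}^{m(n)}$ of dimension proportional to $m(n)$, which does not exist for $q'>2$.

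Once the dimension is fixed this way, your own factorization also closes the gap directly, and more simply than the paper's route. Take Kashin-type random sections $G_s\subset\R^N$ with $N\le cs$, on which the $L_q(\mu_N)$- and $L_2(\mu_N)$-norms are equivalent. Then $\pi_p(\rho)\le\pi_2(\rho)\lesssim N^{1/q'}\lesssim s^{1/q'}=o(\sqrt{s})$, because $q'>2$.

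The $s$-independent bound hoped for in your fallback is not available. For such sections, testing on the unit vector basis of $\ell_{q'}^N$ already gives $\pi_2(\rho)\gtrsim s^{1/q'}$. But you do not need it.
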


The ``Maurey-Pisier + Krivine'' Theorem (see e.g. \cite[Theorem 6]{Maurey} or \cite[Theorem 13.2]{MS}) tells us that $Y$ contains $\ell_q^n$ uniformly if $q$ is the supremum of all numbers $r$ for which $Y$ has type $r$; likewise, $Y$ contains $\ell_\infty^n$ uniformly iff it doesn't have non-trivial cotype.

For a proof, we need \Cref{complem l2}, as well as a series of auxiliary results.

\begin{proposition}\label{liftings}
There is a function $\psi : (0,1/2) \to (0,1)$ 
with the following property.
If $p \in [2,\infty)$, $u \in B(W,\ell_2^n)$ is a contraction with $\pi_p(u) \geq c \sqrt{n}$, then, for $n$ large enough, there exists $E \subset \ell_2^n$ with $\dim E \geq \psi(c) n$, and $v \in B(E,W)$ with $\|v\| \leq 1/\psi(c)$, so that $uv = I_E$.
\end{proposition}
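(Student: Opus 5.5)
The plan is to combine Pietsch factorization with a Bourgain--Tzafriri type restricted invertibility argument. Since $\pi_p(u) \leq \pi_2(u)$ for $p \geq 2$, we get $\pi_2(u) \geq c\sqrt{n}$. Let $w_n = \pi_2$-norm behavior of $u^*$.

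First we pass to a Hilbert--Schmidt type quantity. By the Pietsch factorization theorem, $u$ factors as $u = \tilde u \circ j \circ \iota$, where $\iota : W \to C(K)$ is the canonical embedding, $j : C(K) \to L_2(\mu)$ is the formal identity for a probability measure $\mu$ with $\pi_2(u) = \pi_2(j)$, and $\tilde u : L_2(\mu) \supset \overline{j\iota(W)} \to \ell_2^n$. Rather than work with this factorization directly, I would use the dual viewpoint. We have $\pi_2(u) \geq \pi_p(u) \geq c\sqrt n$, and $\pi_2(u) \le \sqrt n\|u\| \le \sqrt n$, so $u$ is \emph{almost extremal}.

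Extremality yields a large well-conditioned piece. The goal is to find many vectors $w_1,\dots,w_m \in \ball(W)$, with $m \geq \psi(c) n$, such that the images $u w_j$ are close to an orthonormal system in $\ell_2^n$. There are two routes. The first is a Gaussian/volumetric route: $\pi_2(u) \geq c\sqrt n$ with $\|u\|\le 1$ implies that $u(\ball(W))$ is large in the sense of a lower estimate on the $\ell$-norm or on volume ratios, for instance $\mathrm{vol}(u(\ball(W)))^{1/n} \gtrsim c\,\mathrm{vol}(\ball(\ell_2^n))^{1/n}$ up to a factor. The second, more direct route uses Pietsch to get $\mu$ on $\ball(W^*)$ with $\|uw\| \le \pi_p(u)\big(\int|\langle w^*,w\rangle|^p d\mu\big)^{1/p}$. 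I would instead take a finite sequence $(w_j)$ with $\|(w_j)\|_{p,\mathrm{weak}}\le 1$ and $\sum\|uw_j\|^p \ge (c\sqrt n/2)^p$. Setting $z_j = uw_j$, we have $\|(z_j)\|_{p,\mathrm{weak}}\le 1$ in $\ell_2^n$ and $\sum \|z_j\|^p \gtrsim c^p n^{p/2}$. A Hilbert space argument, via the operator $A:\ell_{p'}^N\to\ell_2^n$, $e_j\mapsto z_j$, with $\|A\|\le1$, together with Hölder and $\|z_j\|\le1$, forces roughly $\gtrsim c^{p'}n$ of the $z_j$ to have norm $\gtrsim c$ and to be "spread out". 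Concretely, the operator $B: \ell_2^N \to \ell_2^n$ restricted to those indices has Hilbert--Schmidt norm$^2 \gtrsim c^2 n$ and operator norm $O(1)$.

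Next comes restricted invertibility. Applying the Bourgain--Tzafriri theorem (in its Spielman--Srivastava form) to $B$, with normalized columns $z_j/\|z_j\|$, we get a subset $\sigma$ of size $\geq \psi(c) n$ on which $\|\sum_{j\in\sigma} a_j z_j\| \ge \psi(c)\|(a_j)\|_2$. Let $E = \spn\{z_j : j\in\sigma\}$, and define $v(\sum a_j z_j) = \sum a_j w_j$. Then $uv = I_E$. To bound $\|v\|$, note $\|\sum a_j w_j\| \le \|(a_j)\|_{p'}\le |\sigma|^{1/p'-1/2}\|(a_j)\|_2$, which is not bounded for $p>2$.

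This last point is the main obstacle: controlling $\|v\|$ requires $\|\sum_{j\in\sigma} a_j w_j\| \lesssim \|(a_j)\|_2$, i.e.\ $2$-weak control rather than $p$-weak. I would fix it by working from the start with $\pi_2$ and its dual, since $\pi_2(u)\ge\pi_p(u)\geq c\sqrt n$. Choosing $(w_j)$ with $\|(w_j)\|_{2,\mathrm{weak}}\le 1$ and $\sum\|uw_j\|^2\ge c^2n/2$ gives the bound $\|v\|\le 1/\psi(c)$ directly from the restricted invertibility lower bound. This makes $p\ge2$ enter only through $\pi_2\ge\pi_p$.
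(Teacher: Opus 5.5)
\textbf{There is a genuine gap, in the restricted invertibility step.} A lower estimate $\|\sum_{j\in\sigma}a_jz_j\|\ge\psi(c)\|(a_j)\|_2$ forces $\|z_j\|\ge\psi(c)$ for every $j\in\sigma$. But a sequence with $\|(w_j)\|_{2,\mathrm{weak}}\le 1$ and $\sum_j\|uw_j\|^2\ge c^2n/2$ need not contain a single $z_j=uw_j$ of norm bounded below. Your claim that $\gtrsim n$ of the $z_j$ have norm $\gtrsim c$ is false. For instance, take $W=\ell_2^n$, $u=I$, and let $(w_j)_{j=1}^{nm}$ consist of the vectors $m^{-1/2}e_i$, each repeated $m$ times. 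Then $\sum_j\langle\cdot,w_j\rangle w_j=I$, so $\|(w_j)\|_{2,\mathrm{weak}}=1$ and $\sum_j\|uw_j\|^2=n$, yet $\|z_j\|=m^{-1/2}$ for every $j$. Normalizing the columns destroys the bound $\|B\|\le 1$ that Bourgain--Tzafriri needs. The non-normalized (Spielman--Srivastava) form, applied to the $N$ columns of $B$, only gives a lower bound that depends on $N$ and degenerates as $N$ grows. So, as written, you do not obtain $\|v\|$ bounded in terms of $c$ alone.

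\textbf{The repair is easy and makes restricted invertibility unnecessary.} Let $A:\ell_2^N\to W$, $e_j\mapsto w_j$, so $\|A\|\le 1$, and set $B=uA$. Take a singular value decomposition $Bx=\sum_{i=1}^n s_i\langle x,x_i\rangle y_i$, with $(x_i)$ and $(y_i)$ orthonormal. Since $s_i\le 1$ and $\sum_i s_i^2\ge c^2n/2$, the set $S=\{i: s_i\ge c/2\}$ has $|S|\ge c^2n/4$. Put $E=\spn[y_i:i\in S]$ and $vy_i=s_i^{-1}Ax_i$. Then $uv=I_E$ and $\|v\|\le 2/c$, so $\psi(c)=c^2/4$ works. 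Equivalently, replace $(w_j)$ by $(Ax_i)$, whose images under $u$ are orthogonal. This is more direct than the paper's proof. There, trace duality produces $T\in B(\ell_2^n,W)$ with only $\pi_2(T)\le c^{-1}\sqrt n$ and $\mathrm{tr}(uT)=n$. One then keeps the eigenvalues of $uT$ lying in $[c^4,c^{-4}]$, and needs a further greedy argument, using $\pi_2(T)$, to pass to a subspace on which $\|T|_E\|\le c^{-6}$. Starting from the definition of $\pi_2$, as you do, gives you a contraction $A$ from the outset, which removes that last step.
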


Note that, in the above situation, $\pi_p(u) \leq \pi_2(u) \leq \|u\| \pi_2(I_{\ell_2^n}) = \sqrt{n}$.

\begin{proof}
It clearly suffices to supply a proof for $p=2$.

Take $u$ as above. By trace duality, there exists $T \in B(\ell_2^n,W)$ with $\pi_2(T) \leq c^{-1} \sqrt{n}$ so that $\nu_1(uT) = {\textrm{tr}}(uT) = n$. In fact, we can assume that $uT$ is diagonal in an orthonormal basis $(\delta_i)_{i=1}^n$, with entries $\lambda_1 \geq \ldots \geq \lambda_n \geq 0$.
We have $\sum_i \lambda_i = n$, and $\sum_i \lambda_i^2 = \pi_2(uT)^2 \leq \pi_2(T)^2 \leq c^{-2} n$. We claim that the set $S = \{i : c^4 \leq \lambda_i \leq c^{-4}\}$ has cardinality of at least $c^4 n/2$. 

Indeed, let $S_+ = \{i : \lambda_i > c^{-4}\}$, then $|S_+| c^{-8} < \sum_{i \in S_+} \lambda_i^2 < c^{-2} n$, so $|S_+| < c^6 n$. Further, 
$$
\sum_{i \in S_+} \lambda_i \leq \big( \sum_{i \in S_+} \lambda_i^2 \big)^{1/2} |S_+|^{1/2} \leq c^{-1} \sqrt{n} \cdot c^3 \sqrt{n} = c^2 n .
$$
Finally, $\sum_{i : \lambda_i < c^4} \lambda_i \leq c^4 n$, so $\sum_{i \in S} \lambda_i \geq (1 - c^2 - c^4) n > n/2$, yielding $|S| \geq c^4 n/2$.

Now let $E_0 = \spn[\delta_i : i \in S]$. For any $x \in E_0$, $\|Tx\| \geq \|uTx\| \geq c^4 \|x\|$. Our goal is to find $E \subset E_0$, of dimension proportional to $n$, so that $\|T|_E\| \leq c^{-6}$.
To do this, find $e_1 \in E_0$ with $\|e_1\| = 1$, $\|T e_1\| = \|T|_{E_0}\|$. Let $E_1 = E_0 \cap e_1^\perp$, and find $e_2 \in E_1$ with $\|e_2\| = 1$, $\|T e_2\| = \|T|_{E_1}\|$, then let $E_2 = E_0 \cap e_1^\perp \cap e_2^{\perp}$. 
After performing $k$ steps ($k = \lceil c^9 n \rceil$), we obtain orthonormal vectors $e_1, \ldots e_k$. Clearly, $\|(e_i)_{i=1}^k\|_{2,{\textrm{weak}}} \leq 1$, hence $k \|Te_k\|^2 \leq \sum_{i=1}^k \|T e_i\|^2 \leq \pi_2(T)^2 \leq c^{-2} n$. From this, $\|Te_k\|^2 \leq c^{-2} n/\lceil c^9 n \rceil$.

Let $E = E_0 \cap (\cap_{i=1}^{k-1} e_i^\perp)$. Then $\dim E \geq \dim E_0 - k + 1 \geq \big( c^4/2 - c^9) n$, and $\|T|_E\| \leq \|Te_k\| \leq c^{-6}$.
\end{proof}

It is known (see e.g.~\cite{FLM} or \cite[Section 5]{MS}) that there exists $c > 0$ so that, for $1 \leq r < 2$, $\ell_r^{m(n)}$ contains a $2$-isomorphic copy of $\ell_2^n$, with $m(n) \leq cn$. Consequently, there is a quotient map $q_n : \ell_{r'}^{m(b)} \to F_n$, where $F_n$ is $2$-isomorphic to $\ell_2^n$.

\begin{corollary}\label{bad liftings}
In the above situation, $\limsup_n n^{-1/2} \pi_2(q_n) = 0$.
\end{corollary}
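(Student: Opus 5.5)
We argue by contradiction, using \Cref{liftings}. Suppose the conclusion fails. Then there are $c>0$ and an infinite set of $n$ with $\pi_2(q_n)\geq c\sqrt{n}$; shrinking $c$, we may assume $c<1/2$. We also renormalize $F_n$ to be Euclidean, i.e.\ compose $q_n$ with a $2$-isomorphism $F_n \to \ell_2^n$. This changes $\|q_n\|$ and $\pi_2(q_n)$ by at most a factor of $2$. Set $u_n = q_n/\|q_n\|$, viewed as a contraction $\ell_{r'}^{m(n)} \to \ell_2^n$ with $\pi_2(u_n)\geq c'\sqrt n$ and $c'$ depending only on $c$.

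By \Cref{liftings} (with $p=2$, $W=\ell_{r'}^{m(n)}$), for large such $n$ there are $E\subset \ell_2^n$ with $\dim E\geq \psi(c')n$ and $v:E\to \ell_{r'}^{m(n)}$ with $\|v\|\leq 1/\psi(c')$ and $u_n v = I_E$. Hence $v$ is an isomorphic embedding: $\|x\| = \|u_n v x\| \leq \|vx\| \leq \psi(c')^{-1}\|x\|$. So $\ell_{r'}^{m(n)}$ contains a $C$-isomorphic copy of $\ell_2^{k}$, where $k\geq \psi(c')n \geq (\psi(c')/c)\,m(n)$ and $C = 1/\psi(c')$. Moreover, $vu_n$ restricted suitably gives a projection: $P = v u_n$ is a projection of $\ell_{r'}^{m(n)}$ onto $v(E)$ with $\|P\|\leq C$. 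Thus we obtain uniformly complemented copies of $\ell_2^k$ in $\ell_{r'}^{m}$ with $k$ proportional to $m$.

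This yields the contradiction. Since $r'>2$, the space $\ell_{r'}^m$ has cotype $r'$ with constants bounded independently of $m$. It also has type $2$ uniformly, so its dual $\ell_r^m$ has cotype $2$. I expect the cleanest obstacle-free route is via $2$-summing norms of the projection. Because $\ell_r^m$ has cotype $2$ uniformly, $P^*: \ell_r^m\to\ell_r^m$, which factors through $\ell_2^k$, would be close to $2$-summing. A more robust route is a volume ratio or Kwapień-type argument: a $C$-isomorphic copy of $\ell_2^k$ inside $\ell_{r'}^m$, $r'>2$, forces $k\leq K(C,r) m^{2/r'}$. This is the classical estimate, coming from the type-$2$ and cotype-$r'$ comparison of Rademacher averages of the unit vectors, or from Bennett--Dor--Goodman--Johnson--Newman. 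Since $2/r'<1$, this is incompatible with $k\geq (\psi(c')/c) m$ for large $n$. The main step to verify carefully is exactly this dimension bound for Euclidean subspaces of $\ell_{r'}^m$. I would first try the cotype argument: take an orthonormal basis $(x_i)$ of $v(E)$, compare $\mathbb{E}\|\sum \pm x_i\|\sim\sqrt k$ with the coordinatewise estimate in $\ell_{r'}^m$, which gives $\lesssim m^{1/r'}$ after averaging, since $\sum_i |x_i(j)|^2$ is controlled via $\pi_2(\mathrm{id})$ bounds. This yields $\sqrt k\lesssim m^{1/r'}$, as required.
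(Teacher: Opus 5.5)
Your argument is the paper's. Assuming $\pi_2(q_n)\geq c\sqrt n$ along a subsequence, \Cref{liftings} gives liftings $v$ of uniformly bounded norm on subspaces $E$ of dimension proportional to $n$, hence to $m(n)$. So $\ell_{r'}^{m(n)}$ would contain uniformly isomorphic Euclidean subspaces of proportional dimension, which is impossible. The paper cites Bennett--Dor--Goodman--Johnson--Newman or Figiel--Lindenstrauss--Milman for that impossibility; you sketch a proof of it instead.

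Your Khintchine sketch is correct for $1<r<2$. Take $x_i=vg_i$ for an orthonormal basis $(g_i)$ of $E$. Each coordinate functional on $\ell_{r'}^m$ has norm one, so $(\sum_i|x_i(j)|^2)^{1/2}\leq\|v\|$; this is the right justification, not a $\pi_2$ bound. Averaging over signs then gives $\sqrt k\leq B_{r'}\|v\|\,m^{1/r'}$.

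However, the situation also includes $r=1$, i.e.\ $r'=\infty$. There your claims of uniform cotype $r'$ and of $k\leq K(C,r)m^{2/r'}$ are false: $\ell_\infty^m$ does contain $2$-isomorphic copies of $\ell_2^k$ with $k$ of order $\log m$. For this endpoint you have two easy fixes:
\begin{itemize}
\item use $\mathbb{E}\max_{j\le m}|\sum_i\pm x_i(j)|\lesssim\sqrt{\log m}\,\|v\|$, which gives $k\lesssim\|v\|^2\log m$ and still contradicts $k\gtrsim m$;
\item or apply Grothendieck's theorem, which bounds $\pi_2(q_n)$ outright.
\end{itemize}

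Finally, the projection $P=vu_n$ is not needed. As written it is also undefined, since $v$ lives on $E$; you would have to insert the orthogonal projection onto $E$ between $u_n$ and $v$.
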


\begin{proof}
Suppose otherwise; then, by \Cref{liftings}, there exist $n_1 < n_2 < \ldots$, for which we can find $E_k \subset F_{n_k}$ and liftings $v_k : E_k \to \ell_{r'}^{m(n_k)}$ with $q_{n_k} v_k = I_{E_k}$, $\|v_k\| \leq C$, and $\dim E_k \geq n_k/C$, where $C$ is an absolute constant.
Consequently, $\ell_{r'}^{m(n_k)}$ contains a $2C$-isomorphic copy of a Hilbert space, of dimension proportional to $m(n_k)$. This, however, is impossible, see \cite[Theorem 1.1]{BDGJN} or \cite[Example 3.1]{FLM}.
\end{proof}

The following two results are part of Banach space lore.

\begin{lemma}\label{adjoint of embedding}
Suppose $j : G \to Z$ is an isometric embedding, and there exists a projection $P \in B(Z)$, with $\ran (P) = j(G)$.
Then, for any $z^* \in \ran(P^*)$, $\|j^* z^*\| \geq \|z^*\|/\|P\|$.
\end{lemma}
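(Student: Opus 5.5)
The plan is to reduce everything to the duality identity $\langle P^* z^*, z\rangle = \langle z^*, Pz\rangle$ and the observation that $P$ is idempotent with range $j(G)$. Since $P^2 = P$, we also have $(P^*)^2 = P^*$, so every $z^* \in \ran(P^*)$ satisfies $z^* = P^* z^*$. Hence for each $z \in Z$,
$$
\langle z^*, z \rangle = \langle P^* z^*, z \rangle = \langle z^*, Pz \rangle .
$$
In other words, $z^*$ is determined by its values on $\ran(P) = j(G)$.

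Next, I would compute $\|z^*\|$ by testing against $z \in \ball(Z)$. For such $z$, the vector $Pz$ lies in $j(G)$ and has norm at most $\|P\|$. Write $Pz = jg$ with $g \in G$. Because $j$ is isometric, $\|g\| = \|Pz\| \leq \|P\|$. Then
$$
|\langle z^*, z\rangle| = |\langle z^*, jg\rangle| = |\langle j^* z^*, g\rangle| \leq \|j^* z^*\| \, \|P\| .
$$

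Taking the supremum over $z \in \ball(Z)$ gives $\|z^*\| \leq \|P\| \, \|j^*z^*\|$, which is the claim. If $P = 0$, then $G = 0$ and $z^* = 0$, so the statement is trivially true in that case. There is no real obstacle here. The only point that needs care is the idempotence step $z^* = P^* z^*$, which is exactly what lets us restrict attention to $j(G)$ rather than needing any extension argument.
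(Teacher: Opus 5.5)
Your proposal is correct and follows essentially the same route as the paper: you use $z^* = P^*z^*$ to get $\langle z^*, z\rangle = \langle z^*, Pz\rangle$, then write $Pz = jg$ with $\|g\| \leq \|P\|$ and pass to $\langle j^*z^*, g\rangle$. The only difference is that you justify the idempotence step explicitly and mention the degenerate case $P=0$, both of which the paper leaves implicit.
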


\begin{proof}
For $z^*$ as above, 
$$
\|z^*\| = \|P^* z^*\| = \sup_{z \in \ball(Z)} \langle P^* z^* , z \rangle = \sup_{z \in \ball(Z)} \langle z^* , Pz \rangle .
$$
For $z \in \ball(Z)$, $Pz = jg$ for some $g \in G$ with $\|g\| \leq \|P\|$. Thus,
$$
\|z^*\| \leq \sup_{g \in G, \|g\| \leq \|P\|} \langle z^* , jg \rangle = \sup_{g \in G, \|g\| \leq \|P\|} \langle j^*z^* , g \rangle = \|P\| \|j^*z^*\| .
\qedhere
$$
\end{proof}

\begin{lemma}\label{complementation}
Suppose $W$ is a finite dimensional subspace of an infinite dimensional Banach space $Z$. Then for every $\vr > 0$ there exists a finite codimensional subspace $Z_0 \subset Z$, containing $W$, and a projection from $Z_0$ onto $W$, of norm less that $1 + \vr$.
\end{lemma}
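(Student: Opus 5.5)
The plan is to combine Mazur's basic-sequence technique with the principle of local reflexivity, in the form recorded in \cite{OjP}. Fix $\vr > 0$ and set $\eta > 0$ small; the exact value will be determined by $\vr$ and $\dim W$.

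\textbf{Step 1 (norming functionals).} Let $n = \dim W$. By compactness of $\sphere(W)$, choose a finite $\eta$-net $w_1, \ldots, w_m$ of $\sphere(W)$. For each $i$, use Hahn--Banach to pick $z_i^* \in \sphere(Z^*)$ with $\langle z_i^*, w_i \rangle = 1$. Then every $w \in W$ satisfies
$$
\max_i |\langle z_i^*, w \rangle| \geq (1-\eta)\|w\|.
$$
Let $G = \spn[z_1^*, \ldots, z_m^*] \subset Z^*$, a finite dimensional subspace.

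\textbf{Step 2 (choice of $Z_0$ and of the projection).} Since $G$ is finite dimensional and $W$ norms it almost isometrically in the above sense, we can choose a finite dimensional subspace $G_0 \subset G$ with $\dim G_0 = n$ such that the restriction map $G_0 \to W^*$, $g \mapsto g|_W$, is an isomorphism with norm-controlled inverse. The plan is to take a biorthogonal system for $W$: pick $g_1, \ldots, g_n \in Z^*$ with $\langle g_k, e_l \rangle = \delta_{kl}$, where $(e_l)$ is an Auerbach basis of $W$. Define $Z_0 = W \oplus \bigcap_k \ker g_k$. Since $W \cap \bigcap_k \ker g_k = 0$ and $\bigcap_k \ker g_k$ has codimension $n$, this $Z_0$ is all of $Z$, and the resulting projection has norm at most about $n$. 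That bound is too weak, so a finer choice is needed.

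\textbf{Refined construction.} Instead, let $Z_0 = W + \bigcap_i \ker z_i^*$; this subspace has finite codimension and contains $W$. Define $P : Z_0 \to W$ by $P(w + y) = w$ for $w \in W$ and $y \in \bigcap_i \ker z_i^*$. To see that $P$ is well defined, note that if $w \in W \cap \bigcap_i \ker z_i^*$, then the norming property forces $w = 0$. For the norm estimate, observe that
$$
\|w + y\| \geq \max_i |\langle z_i^*, w + y\rangle| = \max_i |\langle z_i^*, w \rangle| \geq (1-\eta)\|w\|,
$$
which gives $\|P\| \leq (1-\eta)^{-1} < 1 + \vr$ once $\eta$ is small enough.

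\textbf{Main obstacle.} The crux is the norming inequality in Step 1, which comes from the standard net argument. For $w \in \sphere(W)$, pick $i$ with $\|w - w_i\| < \eta$; then $|\langle z_i^*, w\rangle| \geq 1 - \eta$, and homogeneity extends this to all of $W$. Everything else is routine. Infinite dimensionality of $Z$ is used only to guarantee that $Z_0$ is a genuinely finite codimensional subspace, in fact of codimension at most $m$, rather than the statement becoming trivial.
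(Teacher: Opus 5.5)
Your refined construction is correct and is essentially the paper's proof. The norming functionals $z_1^*, \ldots, z_m^*$ define a contraction $S : Z \to \ell_\infty^m$, $Sz = (\langle z_i^*, z\rangle)_i$, with $\|Sw\| \geq (1-\eta)\|w\|$ on $W$, and then $Z_0 = W + \ker S$ with $\|w+y\| \geq \|S(w+y)\| = \|Sw\|$ is exactly the paper's argument; you simply make explicit, via a net and Hahn--Banach, the almost isometric embedding of $W$ into $\ell_\infty^N$ and its extension to $Z$. The references to Mazur's technique and local reflexivity, and the abandoned biorthogonal attempt in Step 2, play no role and can be deleted.
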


\begin{proof}[Sketch of a proof]
Find a contraction $T : W \to \ell_\infty^N$, so that $\|Tw\| \geq \|w\|/(1+\vr)$ for any $w \in W$. Extend $T$ to a contraction $S : Z \to \ell_\infty^N$. We claim that $Z_0 = W + \ker S$ has the desired properties.
Indeed, any $z \in Z_0$ has a unique expansion $z = w + x$, with $w \in W, x \in \ker S$. Then
$$
\|w+x\| \geq \|S(w+x)\| = \|Sw\| \geq \frac{\|w\|}{1+\vr} .
$$
Thus, the map $w+x \mapsto w$ determines a projection from $Z_0$ onto $W$, of norm not exceeding $1+\vr$.
\end{proof}

\begin{lemma}\label{fin codim subspaces}
Suppose $Z$ contains $\ell_q^n$'s uniformly with distortion $C$. Then every finite-codimensional subspace of $Z$ contains $\ell_q^n$'s uniformly with the same distortion.
\end{lemma}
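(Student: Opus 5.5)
The plan is a finite-dimensional perturbation argument that rests on \Cref{complementation}. Fix $n$, and suppose $E_n \subset Z$ is an $n$-dimensional subspace together with an isomorphism $T_n : \ell_q^n \to E_n$ satisfying $\|T_n\|\|T_n^{-1}\| \leq C$. Let $Z_0 \subset Z$ be a subspace of finite codimension. We want to find, inside $Z_0$, a copy of $\ell_q^n$ whose distortion is at most $C(1+\vr)$, with $\vr > 0$ arbitrary. Strictly speaking, "the same distortion" should be read as "distortion at most $C+\vr$ for every $\vr$," or one can absorb the $\vr$ by a compactness argument. We can also use a second source of slack: the hypothesis holds for every $N$, not only for $N = n$.

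First I reduce to quotients. Write $Z_0 = \bigcap_{i=1}^m \ker z_i^*$, and let $Q : Z \to \R^m$ be the map $z \mapsto (\langle z_i^*, z\rangle)_i$. Take $N \gg n$ and a copy $E_N$ of $\ell_q^N$ with distortion $C$. The kernel $E_N \cap Z_0$ has codimension at most $m$ in $E_N$. Under the isomorphism, it corresponds to a subspace of $\ell_q^N$ of codimension at most $m$.

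The key step is to show that a subspace of $\ell_q^N$ of codimension at most $m$ contains a $(1+\vr)$-copy of $\ell_q^n$ once $N \geq N(n,m,\vr)$. I would do this with a block argument. Split the coordinates of $\ell_q^N$ into $n$ disjoint blocks of size $M$. Within each block, $\ell_q^M$ meets the codimension-$m$ subspace in a space of dimension at least $M - m > 0$. However, disjointly supported vectors drawn from the subspace block by block need not remain in the subspace jointly, because the constraints couple the blocks.

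To handle the coupling, I instead pick, inside each block $B_j$, a normalized vector $v_j$ supported on $B_j$ that is annihilated by all $m$ constraint functionals. This is possible as soon as $M > m$, since it only requires $v_j$ to lie in a subspace of $\ell_q^{B_j}$ of codimension at most $m$. Every linear combination $\sum_j a_j v_j$ then satisfies all the constraints, so it lies in the subspace. Because the $v_j$ are disjointly supported and normalized, $\|\sum_j a_j v_j\|_q = \|(a_j)\|_q$ for $q<\infty$, and likewise with the max norm for $q = \infty$. So the $v_j$ span an exact isometric copy of $\ell_q^n$.

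Transporting this copy back through the $C$-isomorphism gives a copy of $\ell_q^n$ in $E_N \cap Z_0 \subset Z_0$ with distortion at most $C$. This is exactly the same distortion, so the $\vr$ and \Cref{complementation} turn out to be unnecessary.

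I expected the main obstacle to be the joint-constraint issue just described. Choosing each $v_j$ inside the kernel separately resolves it, because the constraint set is a linear subspace and is therefore closed under linear combinations. The remaining steps are routine: check that $N = n(m+1)$ suffices, and treat $q = \infty$ with the sup norm in place of the $\ell_q$ norm.
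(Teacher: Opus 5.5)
Your argument is correct and is essentially the paper's proof. You take a $C$-isomorphic copy of $\ell_q^{n(m+1)}$, split the basis into $n$ blocks of $m+1$ vectors, and pick in each block a norm-one vector whose image lies in the finite-codimensional subspace (possible by a dimension count); these disjointly supported vectors span an isometric $\ell_q^n$, so its image has distortion at most $C$, and as you conclude, the $\vr$-slack, the complementation lemma and the ``coupling'' worry are unnecessary.
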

\begin{proof}[Sketch of a proof]
Let $W$ be a subspace of $Z$ with $\dim Z/W = m$. Fix $n \in \N$, and show that $W$ contains an $n$-dimensional subspace $C$-isomorphic to $\ell_q^n$. To this end, find $E \subset Z$ such that $\dim E = n(m+1)$, and there exists a contraction $T : \ell_q^{n(m+1)} \to E$ with $\|T^{-1}\| \leq C$.
Denote the canonical basis of $\ell_q^{n(m+1)}$ by $(\delta_j)_{j=1}^{n(m+1)}$. For $1 \leq k \leq n$, the $T(\spn[\delta_{(k-1)(m+1) + j} : 1 \leq j \leq m+1])$ has dimension $m+1$, hence it meets $W$.
Consequently, for each such $k$ there exists a norm one $e_k \in \spn[\delta_{(k-1)(m+1) + j} : 1 \leq j \leq m+1]$ with $Te_k \in W$. Clearly $\spn[Te_k : 1 \leq k \leq n]$ is $C$-isomorphic to $\ell_q^n$.
\end{proof}

\begin{proof}[Proof of \Cref{l q uniformly}]
Find increasing sequences $(m(s)), (n(s))$ so that, for every $s$, there is an embedding $j_s : G_s \to \ell_q^{m(s)}$, where $G_s$ is $2$-isomorphic to $\ell_2^{n(s)}$, and $\pi_2(j_s^*) \leq 2^{-s} \sqrt{n(s)}$.
For $q \in (1,2)$, this is possible by \Cref{bad liftings}, while for $q \in \{1,\infty\}$, we only need to invoke Grothendieck Theorem.

Now suppose $Y$ contains $\ell_q^n$ uniformly. By \Cref{fin codim subspaces}, there exists $C$ so that any finite codimensional subspace of $Y$ contains $\ell_q^n$ with distortion $C$. 
We construct $X$ recursively. First find $F_1 \subset Y$, $C$-isomorphic to $\ell_q^{m(1)}$. Denote the embedding of $F_1$ into $Y$ by $a_1$. 
Find $E_1 \subset F_1$ which is $2C$-isomorphic to $\ell_2^{n(1)}$, and such that $\pi_2(b_1^*) \leq C 2^{-1} \sqrt{n(1)}$, where $b_1$ is the embedding of $E_1$ into $F_1$.
Find a finite codimensional $Y_1 \subset Y$, containing $E_1$, and such that there is a projection $P_1$ from $Y_1$ onto $E_1$, of norm less that $1 + 2^{-1}$.

Find $F_2 \subset \ker P_1 = Y_1'$, $C$-isomorphic to $\ell_q^{m(2)}$; denote its embedding into $Y$ by $a_2$.
Find $E_2 \subset F_2$ which is $2C$-isomorphic to $\ell_2^{n(2)}$, and such that $\pi_2(b_2^*) \leq C 2^{-2} \sqrt{n(2)}$, where $b_2$ is the embedding of $E_2$ into $F_2$.
Find a finite codimensional $Y_2 \subset Y_1$, containing $E_1 + E_2$, and such that there is a projection $P_2$ from $Y_2$ onto $E_1 + E_2$, of norm less that $1 + 2^{-2}$.

Proceed in the same manner. Then $X = \spn[E_k : k \in \N]$ contains $E_k$ uniformly complementably: $P_k$ implements a projection from $X$ onto $E_1 + \ldots + E_k$, of norm not exceeding $1 + 2^{-k}$, giving rise to the natural projections $Q_k = (I - P_{k-1}) P_k$ from $X$ onto $E_k$. 
Then $\sup_k \|Q_k\| \leq 4$, and, for every $k$, $H_k = Q_k^*(X^*)$ is $8C$-isomorphic to $\ell_2^{n(k)}$.
On the other hand, $iQ_k j_k = a_kb_k$, where $j_k : E_k \to X$ is the canonical embedding. Therefore, $\pi_2(j_k^* Q_k^* i^*) \leq \pi_2(b_k^*) \leq C 2^{-k} \sqrt{n(k)}$.
However, as $\|Q_k\| \leq 4$, $\|j_k^* x^*\| \geq \|x^*\|/4$ for any $x^* \in H_k$, by \Cref{adjoint of embedding}. Consequently, $\pi_2(Q_k^* i^*) \leq 4 \pi_2(j_k^* Q_k^* i^*) \leq C 2^{2-k} \sqrt{n(k)}$. It remains to recall \Cref{complem l2}.
\end{proof}


\end{document}